\title{The open and clopen Ramsey theorems in the Weihrauch lattice}
\author{Alberto Marcone}
\address{Department of Mathematics, Computer Science and Physics\newline
University of Udine\newline
Udine, UD 33100, IT}
\email{alberto.marcone@uniud.it}
\author{Manlio Valenti}
\address{Department of Mathematics, Computer Science and Physics\newline
University of Udine\newline
Udine, UD 33100, IT}
\email{manlio.valenti@uniud.it}
\thanks{We thank Arno Pauly, Vasco Brattka, Jun Le Goh and Eleonora Pippia for useful discussions about the topic of the paper.\\
We also thank the anonymous referee for his/her careful reading of the paper and many valuable suggestions that improved the presentation. In particular (s)he noticed that the proof of \thref{thm:lim^n*openRT<=findHopen} yielded a stronger result than the one we originally stated. \\
Both author's research was partially supported by the departmental PRID 
funding \lq\lq HiWei --- The higher levels of the Weihrauch 
hierarchy\rq\rq\ and by the Italian PRIN 2017 Grant \lq\lq Mathematical 
Logic: models, sets, computability\rq\rq.}
\keywords{Computable analysis, Weihrauch degrees, Homogeneous sets, Open and clopen Ramsey theorem.}
\subjclass[2020]{Primary: 03D78; Secondary: 03D30, 03B30, 05C55.}
\tikzstyle{box}=[fill={rgb,255: red,228; green,228; blue,228}, draw=black, shape=rectangle]
\tikzstyle{reducible}=[->, dashed]
\tikzstyle{strictreducible}=[->]
\tikzstyle{nonreducible}=[->, draw=red]
\tikzstyle{uncomp}=[draw=red, <->]
\tikzstyle{thmref}=[opacity=0,inner sep=2mm]
\newcommand{\charfun}[1]{\chi_{{#1}}}
\newcommand{\setdifference}{\backslash}
\newcommand{\setcomplement}[2][]{\ifthenelse{\equal{#1}{}}{#2^\mathrm{C}}{ {#1}\setdifference {#2} }}
\newcommand{\id}[1]{\operatorname{id}_{ {#1} }}
\newcommand{\idBaire}{ \id{\Baire} } 
\newcommand{\restrict}[1]{\ensuremath{\left. \hspace{-1mm} \right|_{#1}}}
\newcommand{\wjump}[1]{{#1}'}
\newcommand{\textdef}[1]{\textit{#1}}
\newcommand{\abslength}[1]{|#1|}
\newcommand{\prefix}{\sqsubseteq}
\newcommand{\pprefix}{\sqsubset}
\newcommand{\coding}[1]{\langle #1 \rangle}
\newcommand{\concat}{\raisebox{.9ex}{\ensuremath\smallfrown} }
\newcommand{\substring}{\preceq}
\newcommand{\finiteSubstring}{\preceq^*}
\newcommand{\dominated}{\trianglelefteq}
\newcommand{\finStrings}[1]{{#1}^{<\mathbb{N}}}
\newcommand{\infStrings}[1]{{#1}^{\mathbb{N}}}
\newcommand{\Baire}{{\mathbb{N}^\mathbb{N}}}
\newcommand{\finBaire}{{\mathbb{N}^{<\mathbb{N}}}}
\newcommand{\Ramsey}{{[\mathbb{N}]}^\mathbb{N} }
\newcommand{\incstring}{{[\mathbb{N}]}^{<\mathbb{N}} }
\newcommand{\Cantor}{{2^\mathbb{N}}}
\newcommand{\pfunction}{:\subseteq}
\newcommand{\mfunction}[2]{: #1 \rightrightarrows #2}
\newcommand{\pmfunction}[2]{\pfunction #1 \rightrightarrows #2}
\newcommand{\strictlyturingreducible}{<_T}
\newcommand{\weireducible}{\le_{\mathrm{W}}}
\newcommand{\strictlyweireducible}{<_\mathrm{W}}
\newcommand{\weiequiv}{\equiv_{\mathrm{W}}}
\newcommand{\weiincomparable}{~|_{\mathrm{W}~}}
\newcommand{\strongweireducible}{\le_{\mathrm{sW}}}
\newcommand{\strongweiequiv}{\equiv_{\mathrm{sW}}}
\newcommand{\weiarithreducible}{\le^a_{\mathrm{W}}}
\newcommand{\strictlyweiarithreducible}{<^a_\mathrm{W}}
\newcommand{\weiarithequiv}{\equiv^a_{\mathrm{W}}}
\newcommand{\KleeneO}{{\mathcal{O}}}
\newcommand{\boldfaceDelta}{\boldsymbol{\Delta}}
\newcommand{\boldfaceSigma}{\boldsymbol{\Sigma}}
\newcommand{\boldfacePi}{\boldsymbol{\Pi}}
\newcommand{\boldfaceGamma}{\boldsymbol{\Gamma}}
\newcommand{\repmap}[1]{\delta_{#1}}
\newcommand{\realizer}{\vdash}
\newcommand{\parallelization}[1]{\widehat{#1}}
\newcommand{\compproduct}{*}
\newcommand{\LO}{\mathrm{LO}}
\newcommand{\WO}{\mathrm{WO}}
\newcommand{\repTree}{\mathbf{Tr}}
\newcommand{\repincTree}{\mathbf{Ti}}
\newcommand{\KB}{\mathrm{KB}}
\newcommand{\ran}{\operatorname{ran}}
\newcommand{\dom}{\operatorname{dom}}
\newcommand{\st}{:}
\newcommand{\sequence}[2]{(#1)_{#2}}
\newcommand{\ball}[2]{{B(#1,#2)}}
\newcommand{\HomSol}{\mathrm{HS}}
\newcommand{\closedsidesoltree}[1]{T_{{#1}}}
\newcommand{\stringcodepower}[1]{\psi_{{#1}}}
\newcommand{\elementpower}[1]{\eta_{{#1}}}
\newcommand{\opencodepairing}{\boxtimes}
\newcommand{\solovay}[1]{W_{#1}}
\newcommand{\describepath}[1]{D_{{#1}}}
\newcommand{\openRamsey}{\boldfaceSigma^0_1\mathsf{-RT}}
\newcommand{\clopenRamsey}{\boldfaceDelta^0_1\mathsf{-RT}}
\newcommand{\findHopen}{\mathsf{FindHS}_{\boldfaceSigma^0_1} }
\newcommand{\wfindHopen}{\mathsf{wFindHS}_{\boldfaceSigma^0_1} }
\newcommand{\TwfindHopen}{\mathsf{TwFindHS}_{\boldfaceSigma^0_1} }
\newcommand{\findHclosed}{\mathsf{FindHS}_{\boldfacePi^0_1}}
\newcommand{\wfindHclosed}{\mathsf{wFindHS}_{\boldfacePi^0_1}}
\newcommand{\findHclopen}{\mathsf{FindHS}_{\boldfaceDelta^0_1}}
\newcommand{\wfindHclopen}{\mathsf{wFindHS}_{\boldfaceDelta^0_1}}
\newcommand{\atrformula}{\mathrm{H}}
\newcommand{\ATR}{{ \mathsf{ATR} }}
\newcommand{\LPO}{ \mathsf{LPO} }
\newcommand{\NHA}{ \mathsf{NHA} }
\newcommand{\codedChoice}[3]{\ifthenelse{\equal{#1}{}}{\mathsf{C}^{\mathsf{#2}}_{{#3}} }{ {#1}\text{-}\mathsf{C}^{\vphantom{g}\mathsf{#2}}_{{#3}} }}
\newcommand{\codedTChoice}[3]{\ifthenelse{\equal{#1}{}}{\mathsf{TC}^{\mathsf{#2}}_{{#3}} }{ {#1}\text{-}\mathsf{TC}^{\vphantom{g}\mathsf{#2}}_{{#3}} }}
\newcommand{\Choice}[1]{\codedChoice{}{}{#1}}
\newcommand{\TChoice}[1]{\codedTChoice{}{}{#1}}
\newcommand{\codedUChoice}[3]{\ifthenelse{\equal{#1}{}}{\mathsf{UC}^{\mathsf{#2}}_{{#3}} }{ {#1}\text{-}\mathsf{UC}^{\vphantom{g}\mathsf{#2}}_{{#3}} }}
\newcommand{\sigmaCofChoice}{\codedChoice{\boldfaceSigma^1_1}{cof}{\mathbb{N}}}
\newcommand{\pSigmaCofChoice}{\parallelization{\codedChoice{\boldfaceSigma^1_1}{cof}{\mathbb{N}}}}
\newcommand{\UCBaire}{\mathsf{UC}_\Baire}
\newcommand{\CBaire}{\mathsf{C}_\Baire}
\newcommand{\CCantor}{\mathsf{C}_\Cantor}
\newcommand{\TCBaire}{\mathsf{TC}_\Baire}
\newcommand{\sTCBaire}{\mathsf{sTC}_{\Baire}}
\newcommand{\sTCCantor}{\mathsf{sTC}_{\Cantor}}
\newcommand{\mflim}{\mathsf{lim}}
\newcommand{\SigmaSep}{\mathbf{\boldfaceSigma^1_1\mathsf{-Sep}}}
\newcommand{\chiPi}{\chi_{\Pi^1_1}}
\newcommand{\cchiPi}{c\chi_{\Pi^1_1}}
\newtheorem{theorem}{Theorem}[section]
\newtheorem{proposition}[theorem]{Proposition}
\newtheorem{lemma}[theorem]{Lemma}
\newtheorem{corollary}[theorem]{Corollary}
\newtheorem{unprovedtheorem}[theorem]{Theorem}
\newtheorem{unprovedproposition}[theorem]{Proposition}
\theoremstyle{definition}
\newtheorem{definition}[theorem]{Definition}
\newtheorem{question}[theorem]{Question}
\begin{document}
\begin{abstract}
	We investigate the uniform computational content of the open and clopen Ramsey theorems in the Weihrauch lattice. While they are known to be equivalent to $\mathrm{ATR_0}$ from the point of view of reverse mathematics, there is not a canonical way to phrase them as multivalued functions. We identify $8$ different multivalued functions ($5$ corresponding to the open Ramsey theorem and $3$ corresponding to the clopen Ramsey theorem) and study their degree from the point of view of Weihrauch, strong Weihrauch and arithmetic Weihrauch reducibility. In particular one of our functions turns out to be strictly stronger than any previously studied multivalued functions arising from statements around $\mathrm{ATR}_0$.
\end{abstract}
\maketitle

\tableofcontents

\section{Introduction}
This work explores the uniform computational strength of some infinite-dimensional generalizations of Ramsey's theorem. The classical finite-dimensional Ramsey theorem can be stated as follows: for every $A\subset \mathbb{N}$ let $[A]^n:=\{ B\subset A\st |B|=n\}$ be the set of subsets of $A$ with cardinality $n$. A map $c\colon[\mathbb{N}]^n\to k$ is called a \textdef{$k$-coloring} of $[\mathbb{N}]^n$. A set $H$ s.t.\ $c([H]^n)=\{i\}$ for some $i<k$ is called \textdef{homogeneous} for $c$. 
\begin{theorem}[Ramsey's theorem]
	For every $n,k\ge 1$ and every coloring $c\colon[\mathbb{N}]^n\to k$ there is an infinite subset $H\subset \mathbb{N}$ that is homogeneous for $c$.
\end{theorem}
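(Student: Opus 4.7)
The plan is to proceed by induction on the exponent $n$, treating $k$ as fixed (or universally quantified throughout).

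\textbf{Base case.} For $n=1$ the statement reduces to the infinite pigeonhole principle: given $c\colon \mathbb{N} \to k$, the sets $c^{-1}(\{i\})$ for $i<k$ partition $\mathbb{N}$, so at least one of them is infinite and serves as $H$.

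\textbf{Inductive step.} Assuming the result for $n$, fix a coloring $c\colon [\mathbb{N}]^{n+1}\to k$. I would build, by recursion, an infinite sequence of points $a_0<a_1<a_2<\cdots$, together with a nested sequence of infinite sets $\mathbb{N}=A_0\supseteq A_1\supseteq A_2\supseteq\cdots$ and colors $d_0,d_1,\ldots<k$, using the following step. Suppose $a_j$ and $A_j$ with $a_j=\min A_j$ are defined. Consider the auxiliary coloring $c_j\colon [A_j\setminus\{a_j\}]^n \to k$ given by $c_j(B):=c(\{a_j\}\cup B)$. By the inductive hypothesis applied inside $A_j\setminus\{a_j\}$ (which is order-isomorphic to $\mathbb{N}$), there is an infinite $A_{j+1}\subseteq A_j\setminus\{a_j\}$ homogeneous for $c_j$ of some color $d_j<k$; set $a_{j+1}:=\min A_{j+1}$. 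By construction, for any indices $j<i_1<\cdots<i_n$ one has $c(\{a_j,a_{i_1},\ldots,a_{i_n}\})=d_j$, since $a_{i_1},\ldots,a_{i_n}\in A_{j+1}$.

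Finally, apply the base case (pigeonhole) to the sequence $(d_j)_{j\in\mathbb{N}}$: some color $i^*<k$ satisfies $d_j=i^*$ for infinitely many $j$, and $H:=\{a_j : d_j=i^*\}$ is the desired infinite homogeneous set, because any $(n+1)$-element subset $\{a_{j_0},\ldots,a_{j_n}\}\subseteq H$ with $j_0<\cdots<j_n$ has color $d_{j_0}=i^*$.

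The main obstacle is purely notational rather than mathematical: one must be careful that the inductive hypothesis is applied to a coloring of $[\mathbb{N}]^n$ (or of an infinite subset, which is isomorphic as an ordered set to $\mathbb{N}$), and that the homogeneity color $d_j$ is recorded separately so that a final pigeonhole produces a truly homogeneous set rather than merely a set on which each minimum determines the color of the remaining tuple. Everything else is straightforward bookkeeping in the recursion.
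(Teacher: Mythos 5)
Your proof is correct: it is the standard argument for Ramsey's theorem, by induction on $n$ with the infinite pigeonhole principle as the base case and the construction of a pre-homogeneous sequence $a_0<a_1<\cdots$ (where the color of an $(n+1)$-tuple depends only on its minimum) followed by a final pigeonhole on the colors $d_j$. The paper states this classical theorem without proof, as background for the infinite-dimensional generalizations it actually studies, so there is no proof to compare against; your argument is the canonical one and all the steps, including the nesting $A_{j+1}\subseteq A_j\setminus\{a_j\}$ that guarantees $a_{i_1},\ldots,a_{i_n}\in A_{j+1}$, are in order.
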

There is a vast literature exploring the computational aspects of Ramsey's theorem. Our focus will be on the infinite generalization of the above result, i.e.\ we will consider $n=\infty$. In particular we will focus on Nash-Williams' theorem, also called the open Ramsey theorem: 
\begin{theorem}[Nash-Williams \cite{NashWilliams65}]
	The open subsets of $\Ramsey$ admit infinite homogeneous sets.
\end{theorem}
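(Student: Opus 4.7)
The plan is to follow the classical Galvin--Prikry style combinatorial forcing argument. First, I would exploit that $U\subseteq\Ramsey$ is open to represent it by a set $B$ of strictly increasing finite tuples such that $X\in U$ iff some initial segment of $X$ (in its increasing enumeration) belongs to $B$. The goal is then to construct an infinite $H\subseteq\mathbb{N}$ such that either every $X\in [H]^{\mathbb{N}}$ has a prefix in $B$ (so $[H]^{\mathbb{N}}\subseteq U$) or no $X\in [H]^{\mathbb{N}}$ has a prefix in $B$ (so $[H]^{\mathbb{N}}\cap U=\emptyset$).

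To carry out the construction, I would introduce the auxiliary notions ``$s$ \emph{accepts} $M$'' (meaning every $X\in [M]^{\mathbb{N}}$ with $\min X>\max s$ satisfies $s\concat X\in U$) and ``$s$ \emph{rejects} $M$'' (meaning no infinite $N\subseteq M$ is accepted by $s$), for finite increasing tuples $s$ and infinite sets $M$ with $\min M>\max s$. The key combinatorial lemma is that for every such pair $(s,M)$, there is an infinite $N\subseteq M$ which is either accepted or rejected by $s$; this is proved by a thinning argument, iteratively asking whether some infinite subset of $M$ is accepted and peeling off one witness at a time.

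With the lemma in hand, the next step is a fusion/diagonal construction: I would build a decreasing chain of infinite sets $M_0\supseteq M_1\supseteq\cdots$ together with integers $h_0<h_1<\cdots$, where $h_k\in M_k$, $M_{k+1}\subseteq M_k\setminus\{0,\dots,h_k\}$, and every increasing tuple drawn from $\{h_0,\dots,h_k\}$ either accepts or rejects $M_{k+1}$. Setting $H:=\{h_k\st k\in\mathbb{N}\}$, we obtain a set on which every finite increasing prefix of any $X\in [H]^{\mathbb{N}}$ is decided (accept/reject) relative to the appropriate tail of $H$.

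Finally, I would split into cases: if the empty tuple accepts $H$, then $[H]^{\mathbb{N}}\subseteq U$. Otherwise, the empty tuple rejects $H$, and an induction on the length of finite initial segments of members of $[H]^{\mathbb{N}}$ shows that along $H$ no prefix ever enters $B$, yielding $[H]^{\mathbb{N}}\cap U=\emptyset$. The main obstacle is the acceptance/rejection dichotomy lemma: the ``reject'' alternative is not immediate, and a careful iterated thinning is needed to rule out configurations where each fresh element of $M$ produces a new infinite set accepted by $s$ without any single infinite subset being accepted outright.
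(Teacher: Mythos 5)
The paper states this as Nash--Williams' classical theorem and cites \cite{NashWilliams65} without giving a proof, so there is no internal argument to compare against; your proposal must stand on its own. Your outline is the standard Galvin--Prikry combinatorial forcing argument, and the overall architecture (representing $U$ by a set $B$ of finite increasing tuples, the accept/reject machinery, a fusion, and a case split on the empty tuple) is the right one. However, as written there is a mismatch between the lemma you single out and the step that actually carries the combinatorial weight. The dichotomy ``for every $(s,M)$ some infinite $N\subseteq M$ is accepted or rejected by $s$'' is immediate from your definitions: if $s$ does not reject $M$, then by definition some infinite $N\subseteq M$ is accepted, and if $s$ rejects $M$, then $N=M$ works; no thinning is needed. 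The lemma that genuinely requires the iterated thinning you allude to in your last sentence is the \emph{propagation of rejection to one-point extensions}: if $s$ rejects $M$, then there is an infinite $N\subseteq M$ such that $s\concat\langle n\rangle$ rejects $N$ for every $n\in N$ above $\max s$. (One decides each $s\concat\langle n_i\rangle$ on successively thinner tails and observes that if all but finitely many of these extensions accepted, then $s$ would accept the set of their base points, contradicting rejection.) Your fusion, which only guarantees that every tuple from $\{h_0,\dots,h_k\}$ is \emph{decided} on $M_{k+1}$, is too weak for the closing induction: it is consistent with that construction that $\langle\rangle$ rejects $H$ while some $s\concat\langle h\rangle$ with $s\concat\langle h\rangle$ drawn from $H$ accepts the corresponding tail, in which case every $X\in[H]^{\mathbb{N}}$ extending $s\concat\langle h\rangle$ lies in $U$ and the conclusion $[H]^{\mathbb{N}}\cap U=\emptyset$ fails. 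The fix is to build the propagation lemma into the fusion, so that in the rejecting case every finite increasing tuple from $H$ rejects the tail of $H$ above its maximum; with that strengthening the final induction (a rejecting $s$ cannot lie in $B$, and all its one-point extensions within $H$ again reject) goes through.
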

We will also consider the restriction of Nash-Williams' theorem to clopen subsets of $\Ramsey$. 

The notion of Weihrauch reducibility provides a useful tool to explore the uniform computational content of theorems: indeed many statements in mathematics can be written in the form $(\forall x\in X)(\varphi(x)\rightarrow(\exists y\in Y)(\psi(x,y)))$, which can be thought of as a computational problem where $x$ is an instance of a problem and the goal is to find a solution $y$ (which is, in general, not unique). This can be naturally written in the language of partial multivalued functions by considering $f\pmfunction{X}{Y}$ s.t.\ $f(x)=\{y\in Y \st \psi(x,y)\}$, for every $x\in X$ s.t.\ $\varphi(x)$.

Starting with the work of Gherardi and Marcone \cite{GM09}, it has been shown that Weihrauch reducibility provides a bridge between computable analysis and reverse mathematics. In particular reverse mathematics focuses on proving the equivalence of statements over a weak base theory. Historically, a large number of theorems turned to be equivalent to one of the so-called \textdef{big five}: $\mathrm{RCA}_0$, $\mathrm{WKL}_0$, $\mathrm{ACA}_0$, $\mathrm{ATR}_0$, $\boldfacePi^1_1\mathrm{-CA}_0$. Oftentimes, the close connection between reverse mathematics and Weihrauch reducibility has been exploited to translate results from one setting into the other.

Following this connection, there are a number of established ``analogues'' of the big-five in the Weihrauch lattice: $\mathrm{RCA}_0$ corresponds to computable problems, $\mathrm{WKL}_0$ corresponds to $\CCantor$ (i.e.\ choosing an element from a non-empty closed subset of $\Cantor$) and $\mathrm{ACA}_0$ corresponds to iterations of $\mflim$. 

Recently Marcone \cite{Dagstuhl2016} raised the question ``What do the Weihrauch hierarchies look like once we go to very high levels of reverse mathematics strength?". There have been several works in this direction: Kihara, Marcone and Pauly \cite{KMP20} have studied several principles, like the (strong) comparability of well-orders, the perfect tree theorem and the open determinacy theorem; Goh \cite{gohatr,GohThesis} analyzed the weak comparability of well-orders and the K\"onig duality theorem; Angl\`es D'Auriac and Kihara \cite{KiharaADauriacChoice} dealt with the $\Sigma^1_1$ choice on $\mathbb{N}$ and variants thereof.

Our work explores the formalization of the open and clopen Ramsey theorems as multivalued functions and analyzes their position in the Weihrauch lattice (both the open and the clopen Ramsey theorems are known to be equivalent to $\mathrm{ATR}_0$ over $\mathrm{RCA}_0$, see \cite[Sec.\ V.9]{Simpson09}). Notice that, as already occurred to other principles equivalent to $\mathrm{ATR}_0$ (\cite{KMP20,GohThesis}), there is not a single multivalued function corresponding to the open Ramsey theorem. Actually, in our case, the situation is even more complex than for the open determinacy or the perfect tree theorem, as the two alternatives (homogeneous solution on the open side or homogeneous solution on the closed side) given by the open Ramsey theorem are not mutually exclusive. Therefore given an open set we can ask for a homogeneous solution on the open side, a homogeneous solution on the closed side or a homogeneous solution on either side. Altogether we will define five different multivalued functions corresponding to the open Ramsey theorem and three different functions corresponding to the clopen Ramsey theorem.

While it is hard to point out a single analogue of $\mathrm{ATR}_0$ in the Weihrauch lattice, there are a number of degrees that play a central role in calibrating the strength of principles that, from the point of view of reverse mathematics, lie at the level of $\mathrm{ATR}_0$. Prominent among these are $\UCBaire$, $\CBaire$ and $\TCBaire$ (see Section \ref{subsec:wei} for their precise definitions).
It is known that $\UCBaire\strictlyweireducible\CBaire\strictlyweireducible\TCBaire$ (see \cite{KMP20}).

When dealing with multivalued functions that are very high in the Weihrauch lattice it is often appropriate to use arithmetic Weihrauch reducibility rather than (computable) Weihrauch reducibility. 

In Figure \ref{fig:summary} we summarize the results we obtain both with respect to Weihrauch reducibility and arithmetic Weihrauch reducibility. Notice that the multivalued function $\findHopen$ is stronger than any multivalued function related to $\mathrm{ATR}_0$ considered so far. In fact all these functions are strictly Weihrauch reducible to $\TCBaire^*$, which, by \thref{thm:tcbaire^*<findHopen}, is strictly below $\findHopen$. Notice also that, since $\findHopen$ is closed under parallel product (\thref{prop:findHopen_closed_product}), it computes $\sTCBaire^* \weiequiv \TCBaire^*\times \chiPi^*$, which was suggested as an $\mathrm{ATR}_0$ analogue in \cite[Sec.\ 9]{KMP20}.
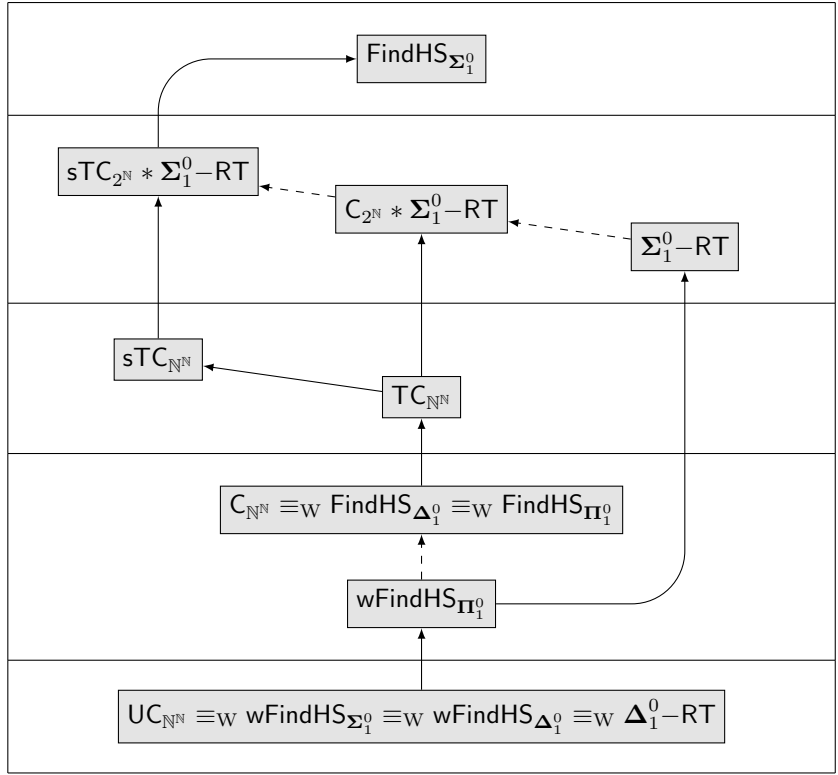
\begin{figure}
\begin{center}
	\tikzstyle{every picture}=[tikzfig]
	\begin{tikzpicture}
		\node [style=box] (UCBaire) at (0, -6.5) {$\UCBaire\weiequiv \wfindHopen \weiequiv \wfindHclopen \weiequiv \clopenRamsey $};
		\node [style=box] (wfindHclosed) at (0, -3.5) {$\wfindHclosed$};
		\node [style=box] (CBaire) at (0, -1) {$\CBaire \weiequiv \findHclopen \weiequiv \findHclosed$};
		\node [style=box] (TCBaire) at (0, 2) {$\TCBaire$};
		\node [style=box] (sTCBaire) at (-7, 3) {$\sTCBaire$};
		\node [style=box] (openRamsey) at (7, 6) {$\openRamsey$};
		\node [style=box] (CCantor*openRamsey) at (0, 7) {$\CCantor\compproduct\openRamsey$};
		\node [style=box] (sTCCantor*openRamsey) at (-7, 8) {$\mathsf{sT}\CCantor\compproduct\openRamsey$};
		\node [style=box] (findHopen) at (0, 11) {$\findHopen$};
		\node [style=none] (35) at (-11, -8) {};
		\node [style=none] (36) at (-11, -5) {};
		\node [style=none] (37) at (11, -5) {};
		\node [style=none] (38) at (11, -8) {};
		\node [style=none] (39) at (-11, 0.5) {};
		\node [style=none] (40) at (11, 0.5) {};
		\node [style=none] (41) at (-11, 4.5) {};
		\node [style=none] (42) at (11, 4.5) {};
		\node [style=none] (43) at (-11, 9.5) {};
		\node [style=none] (44) at (-11, 12.5) {};
		\node [style=none] (45) at (11, 9.5) {};
		\node [style=none] (46) at (11, 12.5) {};
		\drawThm{ [style=strictreducible] (UCBaire) -- (wfindHclosed) }{}{\ref{prop:ucbaire<wfindHclosed}};
		\drawThm{ [style=reducible] (wfindHclosed) -- (CBaire) }{}{\ref{prop:wfindHclosed<=cbaire}};
		\drawThm{ [style=strictreducible, rounded corners=20] (wfindHclosed.east) -| (openRamsey)}{}{\ref{cor:wfindHclosed<openRT}};
		\drawThm{ [style=strictreducible] (CBaire) -- (TCBaire)}{}{};
		\drawThm{ [style=strictreducible] (TCBaire) -- (CCantor*openRamsey)}{}{\ref{prop:tcbaire<=ccantor_comp_openRamsey}};
		\drawThm{ [style=strictreducible] (TCBaire) -- (sTCBaire)}{}{\ref{thm:tcbaire<stcbaire}};
		\drawThm{ [style=strictreducible] (sTCBaire) -- (sTCCantor*openRamsey)}{}{\ref{thm:sTCBaire<=sTCCantor*openRT}};
		\draw [style=reducible] (openRamsey) -- (CCantor*openRamsey);
		\draw [style=reducible] (CCantor*openRamsey) -- (sTCCantor*openRamsey);
		\drawThm{ [style=strictreducible, rounded corners=20] (sTCCantor*openRamsey) |- (findHopen)}{}{\ref{thm:lim^n*openRT<=findHopen}};
		\draw (44.center) to (43.center);
		\draw (43.center) to (41.center);
		\draw (41.center) to (39.center);
		\draw (39.center) to (36.center);
		\draw (36.center) to (35.center);
		\draw (35.center) to (38.center);
		\draw (38.center) to (37.center);
		\draw (37.center) to (40.center);
		\draw (40.center) to (42.center);
		\draw (44.center) to (46.center);
		\draw (46.center) to (45.center);
		\draw (45.center) to (42.center);
		\draw (43.center) to (45.center);
		\draw (41.center) to (42.center);
		\draw (39.center) to (40.center);
		\draw (36.center) to (37.center);
\end{tikzpicture}
	\caption{Multivalued functions related to the open and clopen Ramsey theorems in the Weihrauch lattice. Dashed arrows represent Weihrauch reducibility in the direction of the arrow, solid arrows represent strict Weihrauch reducibility. The large rectangles indicate arithmetic Weihrauch equivalence classes. In particular, every function strictly arithmetically reduces to all the functions in rectangles above its own.}
	\label{fig:summary}
\end{center}
\end{figure}

\subsection{Structure of the paper}
In Section \ref{sec:background} we briefly introduce the notation and recall the 
preliminary notions on represented spaces (Section \ref{subsec:rep_spaces}) and Weihrauch reducibility (Section \ref{subsec:wei}). In Section \ref{sec:ramsey} we will recall the precise statement for the open and clopen Ramsey theorems and prove some lemmas that will be useful in proving the results on the Weihrauch degrees. The reader may skip these lemmas on the first read, and return to it as needed. In Section \ref{sec:rt_wei} we define the multivalued functions corresponding to the open and clopen Ramsey theorems and study their degrees. In particular we divide the analysis into: functions that are reducible to $\UCBaire$ (Section \ref{subsec:ucbaire}), functions that are reducible to $\CBaire$ (but not to $\UCBaire$, Section \ref{subsec:cbaire}) and functions that are not reducible to $\CBaire$ (Section \ref{subsec:not_cbaire}). Moreover, in Section \ref{subsec:strong_wei} we characterize the strength of these functions from the point of view of strong Weihrauch reducibility. Finally, in Section \ref{sec:artihemtic_reducibility} we focus on the behavior of these functions under arithmetic Weihrauch reducibility and in Section \ref{sec:conclusions} we draw some conclusions and list some open problems. 

\section{Background}
\thlabel{sec:background}
	We refer the reader to \cite{Simpson09} for a detailed introduction to reverse mathematics. 

	Here we introduce the notation we will use in the rest of the paper. 

	Let $\Baire$ be the Baire space endowed with the usual product topology and let $\finBaire$ be the set of finite sequences of natural numbers. Let also $\Ramsey$ be the space of total functions $f\colon \mathbb{N}\to\mathbb{N}$ that are strictly increasing (i.e.\ $n<m\Rightarrow f(n)<f(m)$). This is sometimes called \textdef{Ramsey space} (see e.g.\ \cite[Sec.\ V.9]{Simpson09}). We will use the symbol $\prefix$ for the prefix relation. For $\sigma,\tau\in\finBaire$ we write: $\abslength{\sigma}$ for the length of $\sigma$, $\sigma\concat \tau$ for the concatenation of $\sigma$ and $\tau$, and $\sigma\dominated \tau$ for the domination relation, i.e.\ $\abslength{\sigma}=\abslength{\tau}$ and $(\forall i<\abslength{\sigma})(\sigma(i)\le \tau(i))$. The domination relation applies also to infinite strings. Also if $x\in\finBaire\cup \Baire$ we write $x[m]$ for the prefix of $x$ of length $m$. If $f,g\in\Baire$ we denote the composition $f\circ g$ by $fg$. Moreover, we write $f\substring g$ if $f$ is a subsequence of $g$, i.e.\ if there exists $h\in\Ramsey$ s.t.\ $f=gh$. In particular for every $h\in\Ramsey$ we have $gh\substring g$. Similarly, we write $\sigma\finiteSubstring f$ if $\sigma$ is a finite subsequence of $f$.

\subsection{Represented spaces}
\label{subsec:rep_spaces}
	We briefly introduce the main concepts on the theory of represented spaces. For a more thorough presentation we refer to \cite{Pauly16,Weihrauch00}.
	
	A \textdef{represented space} is a pair $(X,\repmap{X})$ where $X$ is a set and $\repmap{X}\pfunction \Baire \to X$ is a partial surjection. For every $x\in X$ a \textdef{name} or \textdef{code} for $x$ is any element of $\repmap{X}^{-1}(x)$. When it is clear from the context, we omit to explicitly mention the representation map $\repmap{X}$.

	Given two representation maps $\delta$ and $\delta'$ of the same set $X$, we say that $\delta$ is \textdef{reducible} to $\delta'$ (and we write $\delta\le_c \delta'$) if there is a computable map $F\pfunction\Baire\to\Baire$ s.t.\ $\delta(p)=\delta'(F(p))$ for every $p\in \dom(\delta)$. The maps $\delta$ and $\delta'$ are called \textdef{equivalent} if $\delta\le_c\delta'$ and $\delta'\le_c\delta$. 

	Let $(X,\repmap{X})$ and $(Y,\repmap{Y})$ be represented spaces and $f\pmfunction{X}{Y}$ be a partial multivalued function. We say that a partial function $F\pfunction \Baire\to\Baire$ is a \textdef{realizer}\footnote{The very existence of realizers for every $f$ depends on some form of the axiom of choice.} for $f$ (and we write $F\realizer f$) if, for every $p\in\dom(f\circ \repmap{X})$ we have that $\repmap{Y}(F(p))\in f(\repmap{X}(p))$. The notion of realizer allows us to transfer properties of functions on the Baire space (such as computability or continuity) to multivalued functions on represented spaces. In particular we say that a multivalued function between represented spaces is computable if it has a computable realizer. 

	Let $(X,d)$ be a separable metric space and $\alpha\colon\mathbb{N}\to X$ be a dense sequence in $X$. The tuple $(X,d,\alpha)$ is called a \textdef{computable metric space} if $d\circ(\alpha\times\alpha)\colon\mathbb{N}^2\to\mathbb{R}$ is a computable double sequence in $\mathbb{R}$. 

	Fix a computable enumeration $\sequence{q_n}{n\in\mathbb{N}}$ of $\mathbb{Q}$. If $(X,d,\alpha)$ is a separable metric space, for every $k\ge 1$ we can define the represented spaces $(\boldfaceSigma^0_k(X),\repmap{\boldfaceSigma^0_k(X)})$, $(\boldfacePi^0_k(X),\repmap{\boldfacePi^0_k(X)})$, $(\boldfaceDelta^0_k(X),\repmap{\boldfaceDelta^0_k(X)})$ inductively by: 
	\begin{itemize}
		\item $\repmap{\boldfaceSigma^0_1(X)}(p):=\bigcup_{\coding{i,j}\in\ran(p)}\ball{\alpha(i)}{q_j}$;
		\item $\repmap{\boldfacePi^0_k(X)}(p):=\setcomplement[X]{\repmap{\boldfaceSigma^0_k(X)}(p)}$;
		\item $\repmap{\boldfaceSigma^0_{k+1}(X)}(\coding{p_0,p_1,\hdots}):=\bigcup_{i\in\mathbb{N}} \repmap{\boldfacePi^0_k(X)}(p_i)$;
		\item $\repmap{\boldfaceDelta^0_k(X)}(\coding{p,q}):=\repmap{\boldfaceSigma^0_k(X)}(p)$, iff $p\in\dom(\repmap{\boldfaceSigma^0_k(X)})$, $q\in\dom(\repmap{\boldfacePi^0_k(X)})$ and $\repmap{\boldfaceSigma^0_k(X)}(p)=\setcomplement[X]{\repmap{\boldfacePi^0_k(X)}(q)}$.
	\end{itemize}
	The represented spaces $(\boldfaceSigma^0_k(X),\repmap{\boldfaceSigma^0_k(X)})$, $(\boldfacePi^0_k(X),\repmap{\boldfacePi^0_k(X)})$ can be defined for any represented space $(X,\repmap{X})$, using the fact that the category of represented spaces is cartesian-closed and letting a name for an open set be the name of its characteristic function, where the codomain $\{0,1\}$ is equipped with the Sierpi\'nski topology.	For separable metric spaces, the two representations are equivalent (see \cite{Pauly16,Brattka05}).

	\begin{lemma}
		\thlabel{lem:computable_operations_open}
		The following maps are computable:
		\begin{enumerate}
			\item \label{itm:includsion_clopen_open} $\boldfaceDelta^0_1(\Ramsey)\hookrightarrow \boldfaceSigma^0_1(\Ramsey):=D\mapsto D$;
			\item \label{itm:compl_clopen_set} $\boldfaceDelta^0_1(\Ramsey)\to \boldfaceDelta^0_1(\Ramsey):=D\mapsto \setcomplement[\Ramsey]{D}$;
			\item \label{itm:open_union} $\cup\colon\boldfaceSigma^0_1(\Ramsey)\times\boldfaceSigma^0_1(\Ramsey)\to\boldfaceSigma^0_1(\Ramsey):=(P,Q)\mapsto P \cup Q$.
		\end{enumerate}
	\end{lemma}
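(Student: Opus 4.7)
All three maps are straightforward syntactic manipulations on names once one unwinds the definitions, so the plan is to handle each item in turn and point out the computable procedure on names.

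For item \eqref{itm:includsion_clopen_open}, recall that a $\boldfaceDelta^0_1(\Ramsey)$-name of a set $D$ is by definition a pair $\coding{p,q}$ with $p\in\dom(\repmap{\boldfaceSigma^0_1(\Ramsey)})$, $q\in\dom(\repmap{\boldfacePi^0_1(\Ramsey)})$, and $\repmap{\boldfaceSigma^0_1(\Ramsey)}(p)=D=\setcomplement[\Ramsey]{\repmap{\boldfacePi^0_1(\Ramsey)}(q)}$. The first projection $\coding{p,q}\mapsto p$ is computable and returns a $\boldfaceSigma^0_1(\Ramsey)$-name of $D$.

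For item \eqref{itm:compl_clopen_set}, from a name $\coding{p,q}$ of $D$ as above, we simply output $\coding{q,p}$: by definition of $\repmap{\boldfacePi^0_1(\Ramsey)}$ the set $\setcomplement[\Ramsey]{D}$ is the $\boldfacePi^0_1$-set coded by $p$ and the $\boldfaceSigma^0_1$-set coded by $q$, so $\coding{q,p}$ is a valid $\boldfaceDelta^0_1$-name of $\setcomplement[\Ramsey]{D}$. The swap is clearly computable.

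For item \eqref{itm:open_union}, recall that a $\boldfaceSigma^0_1(\Ramsey)$-name enumerates codes $\coding{i,j}$ of basic balls $\ball{\alpha(i)}{q_j}$ whose union is the given open set. Given two such enumerations $p_0$ and $p_1$, one computably interleaves them into a single enumeration $p$; then $\repmap{\boldfaceSigma^0_1(\Ramsey)}(p)=\repmap{\boldfaceSigma^0_1(\Ramsey)}(p_0)\cup\repmap{\boldfaceSigma^0_1(\Ramsey)}(p_1)$, as required. One small technical point is that $\Ramsey$ is not itself presented as a separable metric space in the definition quoted above, but it is a $\boldfacePi^0_1$ subspace of $\Baire$ and inherits a canonical computable metric structure from it (equivalently, one can use the general characterization via Sierpi\'nski-valued characteristic functions, as noted immediately before the lemma). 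Under either viewpoint, the interleaving procedure witnesses computability.

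There is no genuine obstacle here: the only thing to double-check is that the definition of $\boldfaceSigma^0_1(\Ramsey)$ one is using is the one under which names enumerate basic opens (so that interleaving works) and that the representations agree for Polish spaces, which is exactly the content of the equivalence cited from \cite{Pauly16,Brattka05}.
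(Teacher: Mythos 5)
Your proof is correct and matches the paper's: items (1) and (2) are handled exactly as in the paper, by projecting out and swapping the two open-set names that constitute a $\boldfaceDelta^0_1$-name. For item (3) the paper simply cites \cite[Prop.\ 3.2(5)]{Brattka05}, whereas you spell out the standard interleaving argument behind that citation; your remark that $\Ramsey$ inherits its computable metric structure from $\Baire$ (so the ball representation applies) is a fine way to close the only technical point.
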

	\begin{proof}
		${}$
		\begin{description}
			\item[\ref{itm:includsion_clopen_open}, \ref{itm:compl_clopen_set}] follow from the fact that a name for a clopen set is the join $\coding{p,q}$ of two names for open sets (one for the set and one for its complement);
			\item[\ref{itm:open_union}] see \cite[Prop. 3.2(5)]{Brattka05}.\qedhere
		\end{description}		
	\end{proof}

	The set $\boldfaceSigma^1_1(X)$ of analytic subsets of $X$ can be seen as a represented space by defining a name for $S$ to be a name for a closed set $A\subset X\times \Baire$ s.t.\ $S=\pi_X (A)$ (where $\pi$ denotes projection). Moreover, we can define a name for a coanalytic set $R\in\boldfacePi^1_1(X)$ to be a name for its complement.

	We denote with $\repTree$ the space of trees on $\mathbb{N}$ represented via their characteristic function. Similarly we denote with $\repincTree$ the space of trees with strictly increasing strings, represented analogously. 
	The function $[\cdot]\colon \repTree\to\boldfacePi^0_1(\Baire)$ that maps a tree to the set of its paths is computable with multivalued computable inverse. This implies that a closed set $A$ of $\Baire$ or $\Ramsey$ can be equivalently represented via the characteristic function of a tree $T$ s.t.\ $[T]=A$.
	
	Similarly, an open set $P$ of $\Baire$ can be equivalently represented via an enumeration $p$ of a prefix-free subset of $\finBaire$ s.t.\ $P=\{ f\in\Baire \st (\exists i)(p(i) \pprefix f) \}$. With a small abuse of notation we write $\tau\in p$ in place of $\tau \in \ran(p)$. The same considerations can be made for the space $\Ramsey$.

	We denote by $\LO=(\LO,\repmap{\LO})$ the represented space of countable linear orders, where an order $L$ is represented by the characteristic function of the relation $\{\coding{a,b}\st a\le_L b\}$. We also denote by $\WO=(\WO,\repmap{\WO})$ the represented space of countable well-orders, where the representation map $\repmap{\WO}$ is the restriction of $\repmap{\LO}$ to codes of well-orders.

	For every tree $T\subset\finBaire$ we denote by $\KB(T)$ the Kleene-Brouwer order on $T$, defined as $\sigma \le_{\KB(T)} \tau$ iff $\sigma,\tau \in T$ and $\tau \prefix\sigma$ or $\sigma\le_{lex} \tau$. The map $T\mapsto \KB(T)$ from $\repTree$ to $\LO$ is computable. It is known that $\KB(T)$ is a well-order iff $[T]=\emptyset$ (see e.g.\ \cite[Lem.\ V.1.3]{Simpson09}).

\subsection{Weihrauch reducibility}
\label{subsec:wei}
	Let $f\pmfunction{X}{Y}$ and $g\pmfunction{Z}{W}$ be multivalued functions between represented spaces. We say that $f$ is \textdef{Weihrauch reducible} to $g$ and we write $f\weireducible g$ iff there are two computable maps $\Phi,\Psi\pfunction \Baire\to\Baire$ s.t., for every realizer $G\realizer g$ we have $p\mapsto \Psi(\coding{ p,G\Phi(p) })$ is a realizer for $f$. We say that $f$ is \textdef{strongly Weihrauch reducible} to $g$, and we write $f\strongweireducible g$, if there are two computable maps $\Phi,\Psi\pfunction \Baire\to\Baire$ s.t., for every realizer $G\realizer g$ we have $\Psi G\Phi \realizer f$. We often say that $\Phi$ is the \textdef{forward} functional of the (strong) Weihrauch reduction, while $\Psi$ is the \textdef{backward} functional.

	The relations $\weireducible$ and $\strongweireducible$ are reflexive and transitive, and therefore induce two degree structures on the family of multivalued functions on represented spaces. For more details on the algebraic properties of the Weihrauch and strong Weihrauch degrees we refer the reader to \cite{HiguchiPauly13,BP16,BGP17}. 

	In the following we briefly introduce the operations on multivalued functions that we will need in this work:
	\begin{description}
		\item [parallel product] $f\times g\pmfunction{X\times Z}{Y\times W}:=(x,z)\mapsto f(x)\times g(z)$ with $\dom(f\times g):=\dom(f)\times \dom(g)$;
		\item [finite parallelization] $f^*\pmfunction{\finStrings{X}}{\finStrings{Y}}$ with domain $\dom(f^*):=\{\sequence{x_i}{i<n}\st (\forall i<n)(x_i \in \dom(f)) \}$ defined as $f^*(\sequence{x_i}{i<n}):=\{ \sequence{y_i}{i<n}\st (\forall i<n)(y_i\in f(x_i)) \}$;
		\item [parallelization] $\parallelization{f}\pmfunction{\infStrings{X}}{\infStrings{Y}}$ with domain $\dom(\parallelization{f}):=\dom(f)^\mathbb{N}$ defined as $\parallelization{f}(\sequence{x_n}{n\in\mathbb{N}}):= \sequence{f(x_n)}{n\in\mathbb{N}}$;
		\item [jump] we define the \textdef{jump} of the represented space $(X,\repmap{X})$ as the represented space $X'=(X,\repmap{X'})$, where a $\repmap{X'}$-name for $x$ is a string $\coding{p_0,p_1,\hdots}$ s.t.\ $\repmap{X}(\lim_{n\to\infty} p_n)=x$. The jump of $f$ is defined as $\wjump{f}\pmfunction{X'}{Y}:=x\mapsto f(x)$. We write $f^{(n)}$ to denote the result of applying the jump operation $n$ times.
 	\end{description}
	We also define the \textdef{compositional product} as 
	\[ f*g:=\max_{\weireducible}\{f_0\circ g_0\st f_0\weireducible f \text{ and } g_0\weireducible g \}. \]
	Intuitively the compositional product captures the idea of applying first $g$, then using some computable operation on the output of $g$ and then feeding the result as an input for $f$. The fact that the compositional product is well-defined has been shown in \cite[Cor.\ 3.7]{BP16}. Notice that, by definition, $f*g$ is a Weihrauch degree and not a specific multivalued function. Nonetheless, with a small abuse of notation, we will write formulas such as $h\weireducible f\compproduct g$ with the obvious meaning. We also write $f^{[n]}$ to denote the $n$-fold compositional product of $f$ with itself, where $f^{[0]}:=\id{}$ and $f^{[1]}:=f$.

	We say that $f$ is a \textdef{cylinder} if $f\strongweiequiv \id{}\times f$, where $\id{}$ denotes the identity on the Baire space. The notion of cylinder is very useful because of the following
	\begin{unprovedproposition}[{\cite[Cor.\ 3.6]{BG09}}]
		For every multivalued functions $f$ and $g$, if $g$ is a cylinder then $f\weireducible g \iff f \strongweireducible g$.
	\end{unprovedproposition}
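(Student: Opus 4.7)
The forward implication $f\strongweireducible g \Rightarrow f\weireducible g$ is immediate from the definitions: any strong reduction gives a reduction whose backward functional simply ignores the extra input $p$. The content lies in the converse, so assume $f\weireducible g$ via computable forward and backward functionals $\Phi$ and $\Psi$, so that for every realizer $G\realizer g$ the map $p\mapsto \Psi(\coding{p,G\Phi(p)})$ realizes $f$.

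The plan is to first establish the result with $\id{}\times g$ in place of $g$ on the right-hand side, and then invoke the cylinder assumption. To this end, define computable $\Phi'\pfunction\Baire\to\Baire$ by $\Phi'(p):=\coding{p,\Phi(p)}$, which is a valid input for $\id{}\times g$ whenever $\Phi(p)\in\dom(g\repmap{})$. Let $\Psi':=\Psi$. If $H$ is any realizer for $\id{}\times g$, then on input $\Phi'(p)$ the realizer $H$ outputs some $\coding{p,q}$ with $q=G\Phi(p)$ for a suitable realizer $G\realizer g$ (this is how $\id{}\times g$ is realized: the identity component passes $p$ through unchanged). Hence $\Psi' H \Phi'(p) = \Psi(\coding{p,G\Phi(p)})$ realizes $f$, witnessing $f\strongweireducible \id{}\times g$.

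Now the cylinder hypothesis gives $\id{}\times g \strongweireducible g$, say via computable $\Phi'',\Psi''$. Composing the two strong reductions is straightforward: on input $p$, apply $\Phi''\Phi'$, feed the result to a realizer of $g$, and post-process with $\Psi'\Psi''$. Since the composition of computable functions is computable, and each individual reduction preserves the strong form (the backward functional never inspects the original input $p$), we obtain computable $\Phi^\ast:=\Phi''\circ\Phi'$ and $\Psi^\ast:=\Psi'\circ\Psi''$ such that $\Psi^\ast G \Phi^\ast\realizer f$ for every $G\realizer g$. This yields $f\strongweireducible g$, completing the proof.

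The only subtle point is the assertion that any realizer $H$ of $\id{}\times g$ acts on $\coding{p,\Phi(p)}$ by passing $p$ through the identity coordinate verbatim; this is the standard consequence of realizability together with the fact that the identity map on the Baire space has only one realizer up to names, so no real obstacle arises. The whole argument is essentially bookkeeping once one recognizes that the role of the cylinder property is precisely to let us thread the original input $p$ alongside the call to $g$.
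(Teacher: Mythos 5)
Your proof is correct; the paper states this proposition without proof (it is cited from \cite[Cor.\ 3.6]{BG09}), and your argument --- factor the plain reduction through $\id{}\times g$ and then compose with the strong reduction $\id{}\times g\strongweireducible g$ supplied by the cylinder hypothesis --- is the standard one. The only point worth tightening is the assertion that $H(\coding{p,\Phi(p)})=\coding{p,G\Phi(p)}$ for a single realizer $G$ of $g$: since $H$'s second component may depend on $p$ and not only on $\Phi(p)$, one should instead argue pointwise, choosing for each fixed $p$ a realizer $G$ of $g$ that agrees at $\Phi(p)$ with the second component of $H$'s output, which is all that the realizer condition for $f$ requires.
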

	Cylinders are also useful to deal with the compositional product, thanks to the so-called \textdef{cylindrical decomposition}:
	\begin{unprovedproposition}[{\cite[Lem.\ 3.10]{BP16}}]
		For all $f,g$ and all cylinders $F, G$ with $F \weiequiv f$ and $G \weiequiv g$ there exists a computable $K$ such that $f*g\weiequiv F\circ K \circ G$.  
	\end{unprovedproposition}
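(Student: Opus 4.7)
The plan is to verify $f \compproduct g \weiequiv F \circ K \circ G$ as two separate reductions, the delicate direction being $f \compproduct g \weireducible F \circ K \circ G$. For the easy direction, note that for \emph{any} computable $K$ the composition $F \circ K \circ G$ is itself of the form $f_0 \circ g_0$ with $f_0 := F$ and $g_0 := K \circ G$; since $F \weiequiv f$ and $K \circ G \weireducible G \weiequiv g$, the maximum characterization of $f \compproduct g$ immediately yields $F \circ K \circ G \weireducible f \compproduct g$, independently of the choice of $K$.

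For the harder direction, the first move is to exploit the cylinder hypothesis via \cite[Cor.\ 3.6]{BG09}: since $F \weiequiv f$ and $F$ is a cylinder, every $f_0 \weireducible f$ in fact satisfies $f_0 \strongweireducible F$, and analogously every $g_0 \weireducible g$ satisfies $g_0 \strongweireducible G$. Hence for any admissible pair $(f_0,g_0)$ in the defining family of $f \compproduct g$ one can find computable $\Phi_g,\Psi_g,\Phi_f,\Psi_f$ such that realizers of $f_0 \circ g_0$ factor (at the level of realizers) as $\Psi_f \circ F \circ \Phi_f \circ \Psi_g \circ G \circ \Phi_g$.

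The core step is to exhibit a single computable $K$ absorbing the middle block $\Phi_f \circ \Psi_g$ uniformly in $(f_0,g_0)$. The idea is to enrich the input passed to $G$ with a G\"odel number $e$ encoding the tuple $(\Phi_g,\Psi_g,\Phi_f,\Psi_f)$; since $G$ is a cylinder it transports this auxiliary coordinate unchanged alongside the genuine $g$-computation. Then $K$ is defined once and for all by: read $e$ from its input, use it to evaluate $\Psi_g$ on the $G$-output producing $y := g_0(x)$, apply $\Phi_f$ to $y$ to obtain the next input, and repackage $(e,x,y,\Phi_f(y))$ in the format expected by $F$. Cylindricity of $F$ again preserves $(e,x,y)$ across the second black-box call, so the backward functional of the outer Weihrauch reduction can finally compute $\Psi_f(y,\cdot)$ on the relevant $F$-coordinate and output $f_0(g_0(x))$.

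The principal obstacle is precisely this uniformity: a single $K$ must handle every admissible $(f_0,g_0)$ at once. This is handled by quoting the computable witnesses as data within the auxiliary coordinate and invoking a universal partial computable function inside $K$ to interpret them; the cylinder property of both $F$ and $G$ is what makes it possible to thread such data through the two black-box calls without polluting the instance fed to $f$ or $g$. Once the encoding is fixed, the verification that $K$ is computable and that the constructed forward/backward maps realize $f \compproduct g \weireducible F \circ K \circ G$ on $\dom(f \compproduct g)$ is a routine unpacking of the definitions.
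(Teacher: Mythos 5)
The paper does not prove this statement itself (it is quoted from \cite[Lem.\ 3.10]{BP16}), but your argument is essentially the proof given there: the easy direction via the maximum characterization of $f\compproduct g$, and the hard direction by upgrading $f_0\weireducible F$ and $g_0\weireducible G$ to strong reductions via \cite[Cor.\ 3.6]{BG09} and obtaining a single uniform $K$ by threading G\"odel numbers of the witnessing functionals through the auxiliary coordinate that cylindricity provides, with a universal machine inside $K$ to interpret them. The sketch is correct and complete in its key ideas.
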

	In particular, knowing that for every function $f$ we have that $f\weiequiv \id{}\times f$ and that the latter is a cylinder, we can always take a representative of $f*g$ of the form $(\id{}\times f)\circ \Phi_e \circ (\id{}\times g)$ for some computable function $\Phi_e$.

	Finally we define the \textdef{total continuation} or \textdef{totalization} of $f$, written $\mathsf{T}f$, as the total multivalued function $\mathsf{T}f(x):=f(x)$ if $x\in\dom(f)$ and $\mathsf{T}f(x)=Y$ otherwise. Clearly $\mathsf{T}f = f$ iff $f$ is total. Notice that the definition of $\mathsf{T}f$ is sensitive to the particular definition of $f$ as a multivalued function between represented spaces. For a more detailed exposition we refer to \cite{BGCompOfChoice19}.

	Let us now define some well-known multivalued functions that will be useful in the development of the work: 
	\begin{itemize}
		\item $\LPO\colon\Baire \to \{0,1\}$ is defined as $\LPO(p):=1$ iff $(\exists n)(p(n)=0)$. It is often convenient to think of $\LPO$ as the problem of finding a yes/no answer to a $\Sigma^{0,p}_1$ or $\Pi^{0,p}_1$ question. 
		\item $\mflim\pfunction \infStrings{(\Baire)}\to\Baire:=\sequence{p_n}{n\in\mathbb{N}}\mapsto \lim_{n\to\infty} p_n$, where the domain of $\mflim$ is the set of Cauchy sequences.
		\item $\NHA\mfunction{\Baire}{\Baire}$ is defined as $\NHA(p)=\{q\st q\text{ is not hyperarithmetic in }p\}$.
		\item $\chiPi\colon\Baire\to\{ 0,1\}$ is the characteristic function of a $\Pi^1_1$-complete set. It is often convenient to think of $\chiPi$ as the function that takes in input a tree on $\mathbb{N}$ and checks whether it is well-founded.
	\end{itemize} 

	A central role is played by the choice problems: given a represented space $X$ we define $\codedChoice{\boldfaceGamma}{}{X}\pmfunction{\boldfaceGamma(X)}{X}$ as the multivalued function that chooses an element from a non-empty set $A\in \boldfaceGamma(X)$. If $\boldfaceGamma=\boldfacePi^0_1$ we simply write $\codedChoice{}{}{X}$. We also write $\codedUChoice{\boldfaceGamma}{}{X}$ if the choice is restricted to singletons. 

	Three choice problems that turned out to be very relevant to calibrate the strength of multivalued functions that corresponds to theorems around $\mathrm{ATR}_0$ (from the point of view of reverse mathematics) are $\UCBaire$, $\CBaire$ and $\TCBaire$. They have been explored in great detail in \cite{KMP20}. It is known that $\mflim^{(n)}\strictlyweireducible\UCBaire$ for every $n$ (see \cite[Sec.\ 6]{BdBPLow12}) and indeed $\mflim^\dagger \weiequiv \UCBaire$, where $\mflim^\dagger$ corresponds to the iteration of $\mflim$ over a countable ordinal (\cite{Pauly15ordinals}). Moreover $\UCBaire$ and $\CBaire$ are closed under compositional product (\cite[Thm.\ 7.3]{BdBPLow12}). In \cite{KMP20} it is proved that $\codedUChoice{\boldfaceSigma^1_1}{}{\Baire}\weiequiv\UCBaire$ and $\codedChoice{\boldfaceSigma^1_1}{}{\Baire}\weiequiv \CBaire$. The fact that $\UCBaire\strictlyweireducible\CBaire$ follows from the fact that the element of a $\Sigma^1_1$ singleton is hyperarithmetic, but the hyperarithmetic functions are not a basis for the $\Pi^0_1$ predicates (see \cite[Thm.\ I.1.6 and thm.\ III.1.1]{SacksHRT}). In particular we have 
	\begin{unprovedtheorem}[{\cite[Cor.\ 3.4]{KMP20}}]
		\thlabel{thm:ucbaire_hyperarithmetic_solution}
		Let $f\pmfunction{\Baire}{X}$ be a (partial) multivalued function, for some represented space $X$. If $f\weireducible \UCBaire$ then, for every $x\in\dom(f)$, $f(x)$ contains some $y$ hyperarithmetic relative to $x$.
	\end{unprovedtheorem}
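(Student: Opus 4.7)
The plan is to combine the equivalence $\codedUChoice{\boldfaceSigma^1_1}{}{\Baire}\weiequiv \UCBaire$ recalled just before the statement with the classical fact that every $\Sigma^1_1$ singleton in $\Baire$ has a hyperarithmetic element, relativized to the oracle $x$.

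First I would unpack the reduction. Suppose $f\weireducible \UCBaire$ via a computable forward functional $\Phi$ and a computable backward functional $\Psi$, and fix $x\in\dom(f)$. Using the equivalence above I may assume that $\Phi(x)$ is a name for a $\Sigma^{1,x}_1$ singleton $\{z\}\subseteq\Baire$. The core step is then to verify that $z$ is hyperarithmetic in $x$: the relation ``$z(n)=k$'' is directly $\Sigma^{1,x}_1$, and by uniqueness of $z$ the relation ``$z(n)\ne k$'' is equivalent to $(\exists j\ne k)(z(n)=j)$, which is again $\Sigma^{1,x}_1$. Hence the graph of $z$ is $\Delta^{1,x}_1$, so $z$ is hyperarithmetic in $x$.

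To close the argument, I would note that any realizer $G\realizer \UCBaire$ must output this unique $z$ when fed $\Phi(x)$, so $\Psi(\coding{x,z})$ is a name for some $y\in f(x)$; since $\Psi$ is computable and $z$ is hyperarithmetic in $x$, the string $\Psi(\coding{x,z})$ (and hence the element $y$ it names) is hyperarithmetic in $x$. The real obstacle is the middle step, which collapses once the equivalence with $\codedUChoice{\boldfaceSigma^1_1}{}{\Baire}$ is invoked; working directly from the $\Pi^0_1$ representation, the same one-sided-existential trick shows that the unique path through a $\Pi^{0,x}_1$ tree in $\Baire$ is already $\Delta^{1,x}_1$.
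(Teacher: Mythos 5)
Your argument is correct. Note that the paper states this result as \cite[Cor.\ 3.4]{KMP20} without proof, so there is no in-paper argument to compare against; your derivation --- unpack the reduction, observe that $\Phi(x)$ names a $\Sigma^{1,x}_1$ (equivalently, closed) singleton $\{z\}$ whose graph is $\Delta^{1,x}_1$ by the one-sided-existential trick, hence $z$ is hyperarithmetic in $x$, and then push $\coding{x,z}$ through the computable backward functional $\Psi$ --- is exactly the standard proof and matches the reasoning the paper itself alludes to in Section~\ref{subsec:wei} when it notes that the element of a $\Sigma^1_1$ singleton is hyperarithmetic.
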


	The Weihrauch degree of $\TCBaire$ has been explored in \cite[Sec.\ 8]{KMP20}. It is known that $\CBaire \strictlyweireducible \TCBaire$ (\cite[Prop.\ 8.2(1)]{KMP20}) and $\TCBaire^*$ is one of the strongest problem studied so far that is still considered among the ``$\mathrm{ATR}_0$ analogues''.

	We conclude this section with the following proposition.
	\begin{proposition}
		\thlabel{prop:product_with_NHA_general}
		Let $f\pmfunction{X}{Y}$ and $g\pmfunction{Z}{W}$ be multivalued functions between represented spaces and let $A\subset \dom(g)$ be s.t.\ 		
		\[ \{ z\in\dom(g)\st (\forall w\in g(z))( w\text{ is not hyperarithmetic in } z ) \} \subset A. \]
		If $f\times \NHA\weireducible g$ then $f\weireducible g\restrict{A}$. 
	\end{proposition}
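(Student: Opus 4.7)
The plan is to instantiate the reduction $f\times \NHA \weireducible g$ with a very specific choice for the $\NHA$-input, namely an input that is computably equivalent to the $f$-input. The idea is that doing so forces the $g$-instance we produce to lie in $A$, simply because a hyperarithmetic answer would contradict the backward functional's obligation to output something non-hyperarithmetic.

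Concretely, let $\Phi,\Psi$ witness $f\times\NHA \weireducible g$. I define the forward functional for the reduction $f\weireducible g\restrict{A}$ as follows: given a name $q$ for some $x\in\dom(f)$, feed $\langle q,q\rangle$ (a name for $(x,q)\in \dom(f)\times \dom(\NHA)$) into $\Phi$ and output $\Phi(\langle q,q\rangle)$, which is a name for some $z\in\dom(g)$. The backward functional takes a name $s$ for some $w\in g(z)$, computes $\Psi(\langle \langle q,q\rangle, s\rangle)$, which is a name for some $(y,y')\in f(x)\times\NHA(q)$, and returns the first projection $y\in f(x)$.

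The crux is to verify that $z\in A$, so that we are entitled to apply the oracle $g\restrict{A}$. By the assumption on $A$, it suffices to show that no $w\in g(z)$ is hyperarithmetic in $z$. Suppose toward a contradiction that some $w\in g(z)$ is hyperarithmetic in $z$; then there is a name $s$ for $w$ hyperarithmetic in $\Phi(\langle q,q\rangle)$, and since $\Phi$ is computable, $s$ is hyperarithmetic in $q$. Extend this to a realizer $G\realizer g$ sending the name $\Phi(\langle q,q\rangle)$ to $s$. Then $\Psi(\langle \langle q,q\rangle, s\rangle)$ is a name for an element of $f(x)\times\NHA(q)$, so its second component is a sequence $y'$ that is not hyperarithmetic in $q$. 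But $y'$ is computed from $\langle q,q,s\rangle$, hence hyperarithmetic in $q$ (since $s$ is), yielding the desired contradiction.

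The main technical point to watch is the ambient notion of hyperarithmetic reducibility for elements of arbitrary represented spaces: the argument uses that hyperarithmetic reducibility is transitive and closed under computable transformations of names, which is standard, and that $\NHA$ is total on $\Baire$ so that $q$ itself is a legitimate $\NHA$-input. Everything else is bookkeeping about joins of names, and the proof is essentially the diagonal trick of plugging an input of $f$ into the $\NHA$-slot of the ambient reduction.
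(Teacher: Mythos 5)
Your proposal is correct and follows essentially the same route as the paper's proof: feed the $f$-input into the $\NHA$-slot, and argue that if $\Phi(\langle q,q\rangle)$ named some $z\notin A$, then $z$ would have a solution with a name hyperarithmetic in $q$, so the backward functional would produce a hyperarithmetic answer to $\NHA(q)$, a contradiction. Your explicit remark about hyperarithmetic reducibility being tracked at the level of names and preserved under computable functionals is exactly the point the paper's proof uses implicitly.
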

	\begin{proof}
		Assume $f\times\NHA\weireducible g$ and let the reduction be witnessed by the computable functions $\Phi,\Psi$. For every $p_x$ which is the name of some $x\in\dom(f)$, the pair $(p_x,p_x)$ is mapped via $\Phi$ to a name $p_z$ for some element $z\in\dom(g)$.
		
		It suffices to show that for every name $p_x$ of an element in the domain of $f$, the pair $(p_x,p_x)$ is mapped via $\Phi$ to a name $p_z$ of an element $z\in A$.

		If this were not the case then, for some $x\in\dom(f)$, $p_z$ is the name of some $z\notin A$. By hypothesis, there is a $w\in g(z)$ s.t.\ $w$ has a name $p_w$ which is hyperarithmetic in $p_z$. Let $G$ be a realizer of $g$ s.t.\ $p_w=G\Phi(p_x,p_x)$. Since $p_w$ is hyperarithmetic in $p_z$, and hence in $p_x$, we have reached a contradiction with the fact that $\Psi(p_w,p_x,p_x)$ computes a solution for $\NHA(p_x)$.
	\end{proof}
	
	This result will often be used in combination with \thref{thm:ucbaire_hyperarithmetic_solution}. In fact if there is a computable $x\in \dom(f)$ s.t.\ $f(x)$ does not contain any hyperarithmetic element, then $f\not\weiequiv\UCBaire$.
	
\section{Ramsey theorems}
\label{sec:ramsey}

The space $\Ramsey$, endowed with the induced topology from the Baire space $\Baire$ is computably isometric to $\Baire$. There is actually a canonical choice for a computable bijection $\Baire\to\Ramsey$.

Let $P\subset\Ramsey$. We say that $f$ is a \textdef{homogeneous solution for $P$} iff
\[ (\forall g\in\Ramsey)(fg\in P) \lor (\forall g\in\Ramsey)(fg\notin P). \]
If $f$ is homogeneous for $P$ we say that $f$ \textdef{lands in $P$} if the first disjunct holds, i.e.\ if $(\forall g\in\Ramsey)(fg\in P)$. Vice versa, if $(\forall g\in\Ramsey)(fg\notin P)$ then we say that $f$ \textdef{avoids $P$}. A set $P\subset\Ramsey$ is called \textdef{Ramsey} (or we say that it has the \textdef{Ramsey property}) iff it has a homogeneous solution. We will denote the set of homogeneous solutions for $P$ (which may either land in it or avoid it) with $\HomSol(P)$. Notice that, in general, a set can have both solutions that land in the set and solutions that avoid the set.

In the literature, the symbol $\Ramsey$ is sometimes used to denote the family of all infinite subsets of $\mathbb{N}$. Also, if $X$ is an infinite subset of $\mathbb{N}$, $[X]^\mathbb{N}$ denotes the family of all infinite subsets of $X$. It is easy to identify the Ramsey space $\Ramsey$ with the space of infinite subsets of $\mathbb{N}$ (by identifying a function $f$ with its range). With this in mind, we may write $[f]^\mathbb{N}:=[\ran(f)]^\mathbb{N}$ to denote the set of all infinite subsequences of $f$. The definition of homogeneous solution can now be written as
\[ [f]^\mathbb{N}\subset P \lor [f]^\mathbb{N}\cap P = \emptyset. \]

It is natural to ask which classes of subsets of $\Ramsey$ have the Ramsey property. The problem is well studied and has an extensive literature. The Galvin-Prikry theorem (\cite{GalvinPrikry73}) states that all Borel subsets of $\Ramsey$ have the Ramsey property. This result can actually be extended to analytic sets (\cite{Silver70}). To go beyond the analytic sets we need axioms above ZFC (see e.g.\ \cite[Rem.\ VI.7.6, p.\ 240]{Simpson09}, \cite[pp.\ 1036--1037]{Kastanas83}). We will focus on Nash-Williams' theorem (\cite{NashWilliams65}), which states that open sets have the Ramsey property. This is also known as the open Ramsey theorem. It, in turn, implies the clopen Ramsey theorem (which is the restriction of Nash-Williams' theorem to clopen sets). As already mentioned, the open and clopen Ramsey theorems are known to be equivalent to $\mathrm{ATR}_0$ over $\mathrm{RCA}_0$ (see \cite[Thm.\ V.9.7]{Simpson09}).

\subsection{Some useful tools}
\label{subsec:tools}
Before formalizing the open and clopen Ramsey theorems in the context of
Weihrauch reducibility as multivalued functions, let us explicitly state
some properties of the set of homogeneous solutions that will turn out to be
useful in the rest of the paper. As a notational convenience we will use the
letters $P,Q,\hdots$ to denote open sets and $D,E,\hdots$ to denote clopen
sets.

\begin{proposition}
	\thlabel{prop:galvin_prikry_subset}
	Let $\boldfaceGamma$ be a definable (boldface) pointclass that is downward closed with respect to Wadge reducibility (i.e.\ it is a downward closed family of Wadge degrees), such as the families of open and of clopen sets. Assume that every $P\in\boldfaceGamma(\Ramsey)$ is Ramsey and that for every $f\in\Ramsey$,
	\[ \boldfaceGamma([f]^\mathbb{N}) = \{ P\cap [f]^\mathbb{N} \st P \in \boldfaceGamma(\Ramsey) \}. \]
	Then for every $f\in\Ramsey$, every $Q\in\boldfaceGamma([f]^\mathbb{N})$ is Ramsey. Moreover if $P\in\boldfaceGamma(\Ramsey)$ and $f\in\Ramsey$ there exists $h\in\HomSol(P)$ s.t.\ $h\substring f$.
\end{proposition}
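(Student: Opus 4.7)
The plan is to exploit a ``change of coordinates'' induced by $f$: since $f\in\Ramsey$ is strictly increasing, the map $\phi\colon\Ramsey\to\Ramsey$ defined by $\phi(g):=fg$ is continuous, takes values inside $[f]^\mathbb{N}$, and satisfies $\phi(g)\substring f$ for every $g\in\Ramsey$. Because $\boldfaceGamma$ is downward closed under Wadge reducibility, pulling back along the continuous map $\phi$ keeps us inside $\boldfaceGamma(\Ramsey)$; this is the only non-trivial ingredient needed.

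I would prove the sharper, second assertion first. Given $P\in\boldfaceGamma(\Ramsey)$ and $f\in\Ramsey$, set $P':=\phi^{-1}(P)=\{g\in\Ramsey\st fg\in P\}$, which lies in $\boldfaceGamma(\Ramsey)$ by the Wadge-closure observation above. Applying the Ramsey hypothesis on $\boldfaceGamma(\Ramsey)$ to $P'$ yields some $h'\in\HomSol(P')$. Letting $h:=fh'$, one has $h\substring f$ directly from the definition of $\substring$. The key identity is $hg=f(h'g)=\phi(h'g)$, which gives $hg\in P$ iff $h'g\in P'$; hence each of the two disjuncts in the homogeneity of $h'$ for $P'$ translates into the corresponding disjunct for $h$ and $P$, establishing $h\in\HomSol(P)$ with $h\substring f$.

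The first assertion would then fall out by combining the second assertion with the hypothesis $\boldfaceGamma([f]^\mathbb{N})=\{P\cap[f]^\mathbb{N}\st P\in\boldfaceGamma(\Ramsey)\}$: writing $Q=P\cap[f]^\mathbb{N}$ and producing $h\substring f$ as in the previous paragraph, one has $[h]^\mathbb{N}\subseteq[f]^\mathbb{N}$, so the conditions ``$hg\in P$'' and ``$hg\in Q$'' are equivalent for every $g\in\Ramsey$. The dichotomy for $h$ with respect to $P$ therefore becomes the corresponding dichotomy for $Q$, and $h$ is homogeneous for $Q$ as well.

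The only step that is not pure bookkeeping is the closure of $\boldfaceGamma$ under $\phi^{-1}$; this is precisely where the hypothesis that $\boldfaceGamma$ is downward closed under Wadge reducibility is used in an essential way, via the continuity of $\phi$. Once this is in place, the rest is a mechanical check that strict monotonicity of $f$ and $h'$ passes through the composition $fh'$ and that the relation $\substring$ behaves as advertised.
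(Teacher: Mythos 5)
Your proof is correct and rests on exactly the same key idea as the paper's: the change of coordinates $g\mapsto fg$ (the paper's $\varphi_f$), the identity $\varphi_f(h)g=\varphi_f(hg)$, and downward Wadge closure of $\boldfaceGamma$ to keep the pullback inside $\boldfaceGamma(\Ramsey)$. The only difference is the order of derivation — the paper proves the first assertion directly and obtains the second by applying it to $Q:=P\cap[f]^\mathbb{N}$, whereas you prove the second assertion directly and deduce the first from it via the hypothesis on $\boldfaceGamma([f]^\mathbb{N})$ — and both orderings work.
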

\begin{proof}
	It is easy to see that every $f\in\Ramsey$ induces a $f$-computable homeomorphism $\varphi_f\colon\Ramsey\to[f]^\mathbb{N}$ defined as
	\[ \varphi_f(p):= n \mapsto f(p(n)). \]
	Notice also that 
	\begin{equation*}\tag{$\star$}
		\varphi_f(h)g = fhg = \varphi_f(hg).
	\end{equation*}
	In particular this homeomorphism preserves subsequences, i.e.\ for every $q\substring p$ we have $\varphi_f(q)\substring \varphi_f(p)$.
	Fix $f\in\Ramsey$ and let $Q\in\boldfaceGamma([f]^\mathbb{N})$. Since $\boldfaceGamma$ is closed under Wadge reducibility we have that
	\[ P:=\varphi_f^{-1}(Q)\in \boldfaceGamma(\Ramsey). \]
	Moreover, since every pointset in $\boldfaceGamma(\Ramsey)$ has the Ramsey property, there is $h\in \HomSol(P)$. Using $(\star)$, it is straightforward to conclude that $\varphi_f(h)\in \HomSol(Q)$.

	For the second part it suffices to apply the first part to $Q:=[f]^\mathbb{N}\cap P$, which is in $\boldfaceGamma([f]^\mathbb{N})$.
\end{proof}

\begin{proposition}
	\thlabel{prop:sets_with_disjoint_solutions}
	Let $A,B\subset \mathbb{N}$ be disjoint. Let $P,Q\in\boldfaceSigma^0_1(\Ramsey)$ be s.t.\
	\begin{enumerate}
		\item \label{itm:dsA} $(\forall f \in P)( f(0)\in A \text{ and } f(1) \in A);$
		\item \label{itm:dsB} $(\forall g \in Q)( g(0)\in B \text{ and } g(1) \in B).$
	\end{enumerate}
	If $R:=P \cup Q$ then
	\[ \HomSol(R)\cap R= (\HomSol(P)\cap P)\cup(\HomSol(Q)\cap Q). \]
\end{proposition}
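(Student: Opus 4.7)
The plan is to prove both inclusions separately, with the $\supseteq$ direction being essentially immediate from the definitions and the $\subseteq$ direction requiring a short argument that exploits the disjointness of $A$ and $B$ together with hypotheses \ref{itm:dsA} and \ref{itm:dsB}.

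For the inclusion $\supseteq$, I would simply observe that if $f\in\HomSol(P)\cap P$, then by homogeneity $[f]^\mathbb{N}\subset P\subset R$, so $f\in\HomSol(R)\cap R$. The same works symmetrically for $Q$. Before turning to the other inclusion, I would also note the small but useful fact that $P\cap Q=\emptyset$: any $f\in P\cap Q$ would have $f(0)\in A\cap B=\emptyset$ by \ref{itm:dsA} and \ref{itm:dsB}. This justifies treating the two cases below as an exhaustive and disjoint alternative.

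For $\subseteq$, take $f\in\HomSol(R)\cap R$, so $[f]^\mathbb{N}\subset R$. By the previous remark, $f$ lies in exactly one of $P,Q$; without loss of generality assume $f\in P$. My goal will be to show $[f]^\mathbb{N}\subset P$, giving $f\in\HomSol(P)\cap P$. Suppose for contradiction that there is $g\in\Ramsey$ with $fg\notin P$; then $fg\in Q$, so by \ref{itm:dsB} we get $f(g(0)), f(g(1))\in B$. Since $f\in P$ gives $f(0)\in A$ by \ref{itm:dsA}, and $A\cap B=\emptyset$, we must have $g(0)\geq 1$.

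The key construction is then to manufacture a subsequence of $f$ whose first two values straddle $A$ and $B$: define $h\in\Ramsey$ by $h(0):=0$ and $h(n):=g(n-1)$ for $n\geq 1$. The inequality $g(0)\geq 1$ ensures $h$ is strictly increasing, so $h\in\Ramsey$ and $fh\in[f]^\mathbb{N}\subset R$. But $(fh)(0)=f(0)\in A$ and $(fh)(1)=f(g(0))\in B$, so by disjointness $fh$ can satisfy neither \ref{itm:dsA} nor \ref{itm:dsB}, hence $fh\notin P\cup Q=R$, a contradiction. I expect the only mildly delicate point to be verifying that $h$ is genuinely strictly increasing (which is what the inequality $g(0)\geq 1$, derived from disjointness, buys us); everything else is bookkeeping.
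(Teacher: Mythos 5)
Your proof is correct and uses the same key mechanism as the paper's: a subsequence of $f$ whose first two values straddle $A$ and $B$ can belong to neither $P$ nor $Q$, contradicting homogeneity. The paper packages this as an induction showing $\ran(f)\subset A$ (so no subsequence can lie in $Q$), whereas you run a single direct contradiction via the shifted index function $h$; the two arguments are interchangeable.
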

\begin{proof}
	The inclusion $(\HomSol(P)\cap P)\cup(\HomSol(Q)\cap Q) \subset \HomSol(R)\cap R$ is trivial and always holds, so we only need to prove the converse direction. Let $h\in \HomSol(R)\cap R$ and assume that $h\in P$. By induction we can easily show that $\ran(h)\subset A$. Indeed, by point \ref{itm:dsA}, $h(0)\in A$ and $h(1)\in A$. Moreover, if $h(i)\in A$ then $h(i+1)\in A$ because $h$ lands in $R$: indeed if not then the substring $\coding{h(i), h(i+1), \hdots}$ of $h$ can neither be in $P$ nor in $Q$ (by the disjointness of $A$ and $B$), hence it cannot be in $R$, contradicting the fact that $h$ lands in $R$. This shows that $h\in P$ implies $h\in \HomSol(P)\cap P$. Notice also that, by the disjointness of $A$ and $B$, $\ran(h)\subset A$ implies $\ran(h)\cap B=\emptyset$ and therefore no subsequence of $h$ is in $Q$. Similarly we can show that $h\in \HomSol(R)\cap Q$ implies $h\in (\HomSol(Q)\cap Q)\setdifference (\HomSol(P)\cap P)$ and therefore the claim follows.
\end{proof}

The following construction was used by Avigad \cite{Avigad98} in his proof of the open Ramsey theorem in $\mathrm{ATR}_0$.
\begin{definition}
	Let $P\in\boldfaceSigma^0_1(\Ramsey)$ and let $\coding{P}$ be a name for $P$. We can define the tree
	\[ \closedsidesoltree{\coding{P}}:= \{ \sigma \in \incstring\st (\forall \tau\finiteSubstring \sigma)(\tau \not \in \coding{P} ) \}. \]		
\end{definition}
\begin{lemma}
\thlabel{lem:solution_closed_side_iff_path_through_tree}
Let $P\in\boldfaceSigma^0_1(\Ramsey)$. For every name $\coding{P}$ of $P$ and every $f\in\Ramsey$ we have
\[ f\in \HomSol(P)\setdifference P \iff f\in [\closedsidesoltree{\coding{P}} ]. \]
\end{lemma}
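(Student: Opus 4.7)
My plan is to unpack both sides of the biconditional and show they amount to the same simple combinatorial statement about finite subsequences of $f$ meeting the enumeration $\coding{P}$.

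First I would observe that the set $\HomSol(P)\setdifference P$ consists exactly of the homogeneous solutions that \emph{avoid} $P$. Indeed, since the identity lies in $\Ramsey$, for any $f \in \HomSol(P)$ we have $f = f\cdot\id{} \in [f]^\mathbb{N}$, so $f \in P$ holds iff the landing disjunct $(\forall g\in\Ramsey)(fg\in P)$ holds. Therefore $f \in \HomSol(P)\setdifference P$ iff $(\forall g\in\Ramsey)(fg\notin P)$, i.e., no infinite subsequence of $f$ lies in $P$.

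Next I would translate membership in $[\closedsidesoltree{\coding{P}}]$: by definition, $f \in [\closedsidesoltree{\coding{P}}]$ iff every prefix $f[n]$ belongs to $\closedsidesoltree{\coding{P}}$, iff for every $n$ and every $\tau \finiteSubstring f[n]$ we have $\tau \notin \coding{P}$. Since every finite subsequence of $f$ is a finite subsequence of some long enough prefix $f[n]$, this condition is equivalent to asserting that no finite subsequence of $f$ appears in the enumeration $\coding{P}$. So the goal reduces to showing
\[
(\forall g\in\Ramsey)(fg\notin P) \iff (\forall \tau\finiteSubstring f)(\tau\notin\coding{P}).
\]

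For the forward direction ($\Leftarrow$), I would argue contrapositively: if $fg \in P$ for some $g\in\Ramsey$, then some $\tau\in\coding{P}$ is a prefix of $fg$, and since $\tau(i) = f(g(i))$ for $i<\abslength{\tau}$, the string $\tau$ is a finite subsequence of $f$, hence of $f[g(\abslength{\tau}-1)+1]$. For the reverse direction ($\Rightarrow$), I would argue contrapositively too: if some $\tau \finiteSubstring f$ lies in $\coding{P}$, then there is a strictly increasing finite witness $\sigma$ with $\tau = f\circ\sigma$, which I extend to some $g\in\Ramsey$ by picking any strictly increasing tail; then $\tau$ is a prefix of $fg$, so $fg$ extends an element of the prefix-free enumeration $\coding{P}$ and therefore lies in $P$, contradicting that $f$ avoids $P$. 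No step looks like a real obstacle: the proof is essentially bookkeeping between the three equivalent ways (finite subsequences of $f$, prefixes of $fg$, and strings in the tree) of expressing "$f$ meets $P$."
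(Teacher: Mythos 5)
Your proof is correct and follows essentially the same route as the paper's: both directions are handled by the same contrapositive bookkeeping between finite subsequences of $f$, prefixes of subsequences $fg$, and strings enumerated in $\coding{P}$. Your preliminary observation that $\HomSol(P)\setdifference P$ is exactly the set of solutions avoiding $P$ is a useful explicit step that the paper leaves implicit, but the substance of the argument is identical.
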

\begin{proof}
	If $f\notin [\closedsidesoltree{\coding{P}}]$ then $(\exists n )(f[n]\notin \closedsidesoltree{\coding{P}})$, i.e.\ $(\exists n)(\exists \tau\finiteSubstring f[n])(\tau \in \coding{P})$.
	This implies that there exists a $g\substring f$ s.t.\ $\tau \pprefix g$. This shows that $g\in P$ and hence $f\notin\HomSol(P)\setdifference{P}$.
	
	Let $f\in [\closedsidesoltree{\coding{P}}]$ and let $g\substring f$. If $g\in P$ then $(\exists n)(g[n]\in \coding{P})$, contradicting the fact that $f\in [\closedsidesoltree{\coding{P}}]$ (by definition of $\closedsidesoltree{\coding{P}}$). Therefore we have that $f\in \HomSol(P)\setdifference{P}$.
\end{proof}
Notice that the above lemma shows that $\HomSol(P)\setdifference P$ is closed whenever $P$ is open. In particular, if $D$ is clopen then $\HomSol(D)$ is closed: indeed, letting 
 $E:=\setcomplement[\Ramsey]{D}$, we have $\HomSol(D)=\HomSol(E)$ and 
\[\HomSol(D)=(\HomSol(D)\cap D)\cup (\HomSol(D)\setdifference D) = (\HomSol(E)\setdifference E) \cup (\HomSol(D)\setdifference D) \]
is the union of two closed sets.

On the other hand, the set of solutions for an open set $P$ that lands in $P$ can be $\Pi^1_1$-complete: let $\sequence{q_i}{i\in\mathbb{N}}$ be an enumeration of the rationals. We can define 
\[ T:=\{\sigma\in\incstring \st (\forall i<\abslength{\sigma}-1)(q_{\sigma(i+1)}<_\mathbb{Q} q_{\sigma(i)} ) \}. \]
A path through $T$ is an infinite descending sequence in $\mathbb{Q}$. If we define $P:=\setcomplement[\Ramsey]{[T]}$ we have that $\HomSol(P)\cap P$ is the set of well-suborders of $\mathbb{Q}$ (every suborder of $\mathbb{Q}$ that is not a well-order contains an infinite descending sequence with increasing indexes, and therefore, a subsequence that lands in $[T]$) and hence is $\Pi^1_1$-complete.

This underlines a critical difference between the problem of finding a homogeneous solution that lands in $P$ and finding one that avoids $P$. 

The following construction will be used in the following to move open sets around while ``preserving'' homogeneous solutions.
\begin{definition}
For every $n>1$ let $\operatorname{pow}_n:=i\mapsto n^{i+1}$. We
can define the map $\elementpower{n}\colon (\incstring \cup \Ramsey ) \to
(\incstring \cup \Ramsey )$ as
\[ \elementpower{n}(f):= \operatorname{pow}_n \circ f = i \mapsto n^{f(i)+1}. \]
It is clear that $\elementpower{n}$ is a computable injection with computable inverse.

Let $P\in\boldfaceSigma^0_1(\Ramsey)$ and let $\coding{P}$ be a name for $P$. We can define (with a small abuse of notation) 
	\[ \elementpower{n}{(\coding{P})}:=\bigcup_{\sigma\in\coding{P}} \{ f\in \Ramsey \st \elementpower{n}(\sigma)\pprefix f \}.  \]
We can naturally extend the definition to a multivalued map $\boldfaceSigma^0_1(\Ramsey)   \rightrightarrows \boldfaceSigma^0_1(\Ramsey)$ defining
	\[ \elementpower{n}(P):= \left\{ \elementpower{n}(\coding{P}) \st \coding{P} \text{ is a name of }P \right\}.  \]
\end{definition}

\begin{lemma}
	\thlabel{lem:elementpower_preserves_solutions}
	Let $P\in \boldfaceSigma^0_1(\Ramsey)$. Fix $n>1$ and let $Q:=\elementpower{n}(\coding{P})$ for some name $\coding{P}$ of $P$. Then 
	\[ f\in \HomSol(P)\cap P \Leftrightarrow \elementpower{n}(f) \in \HomSol(Q)\cap Q. \]
\end{lemma}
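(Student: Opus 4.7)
The plan is to reduce the statement to two elementary observations about $\elementpower{n}$ and then verify the two directions by unwinding the definitions.

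The first observation is that $\elementpower{n}$ commutes with composition by elements of $\Ramsey$: for every $f \in \Ramsey$ and $g \in \Ramsey$,
\[ \elementpower{n}(f) \circ g = i \mapsto n^{f(g(i))+1} = \elementpower{n}(f \circ g). \]
The second observation is that, since the map $k \mapsto n^{k+1}$ is strictly increasing (using $n>1$), $\elementpower{n}$ is injective both on $\Ramsey$ and on $\incstring$ and, moreover, it is prefix-preserving and prefix-reflecting: for $\sigma \in \incstring$ and $h \in \Ramsey$,
\[ \sigma \pprefix h \iff \elementpower{n}(\sigma) \pprefix \elementpower{n}(h), \]
and similarly for prefixes of finite sequences. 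Combined with the definition
\[ \elementpower{n}(\coding{P}) = \bigcup_{\sigma \in \coding{P}} \{ h \in \Ramsey \st \elementpower{n}(\sigma) \pprefix h \}, \]
this yields the equivalence $\sigma \pprefix f$ and $\sigma \in \coding{P}$ iff $\elementpower{n}(\sigma) \pprefix \elementpower{n}(f)$ and $\elementpower{n}(\sigma)$ is a generating prefix of $Q$; in particular $f \in P$ iff $\elementpower{n}(f) \in Q$.

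For the forward direction, assume $f \in \HomSol(P) \cap P$, so $(\forall g \in \Ramsey)(f \circ g \in P)$. Given any $g \in \Ramsey$, by the first observation $\elementpower{n}(f) \circ g = \elementpower{n}(f \circ g)$, and by the equivalence just noted $f \circ g \in P$ implies $\elementpower{n}(f \circ g) \in Q$. Hence $\elementpower{n}(f) \circ g \in Q$ for every $g \in \Ramsey$, which means $\elementpower{n}(f) \in \HomSol(Q) \cap Q$ (taking $g=\id{\mathbb{N}}$ in particular witnesses membership in $Q$).

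For the backward direction, assume $\elementpower{n}(f) \in \HomSol(Q) \cap Q$. Since $\elementpower{n}(f) \in Q$, there is $\sigma \in \coding{P}$ with $\elementpower{n}(\sigma) \pprefix \elementpower{n}(f)$, and injectivity forces $\sigma \pprefix f$, so $f \in P$. For the homogeneity, fix any $g \in \Ramsey$; then $\elementpower{n}(f) \circ g \in Q$ and, using the compositional identity, $\elementpower{n}(f \circ g) \in Q$. By the characterization of membership in $Q$, this gives $f \circ g \in P$. Thus $f \in \HomSol(P) \cap P$.

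The main obstacle, minor as it is, consists in keeping straight the distinction between the open set $P$ and its fixed name $\coding{P}$: the definition of $Q$ depends on $\coding{P}$, so one must show that generating prefixes of $Q$ are \emph{exactly} the $\elementpower{n}$-images of generating prefixes of $P$, which is what injectivity of $\elementpower{n}$ on $\incstring$ guarantees.
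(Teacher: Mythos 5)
Your proof is correct and follows essentially the same route as the paper's: the key identity $\elementpower{n}(f)g=\elementpower{n}(fg)$ together with the equivalence $h\in P\iff\elementpower{n}(h)\in Q$, applied in both directions. You merely spell out in more detail (via injectivity of $k\mapsto n^{k+1}$ and prefix-reflection) the part the paper dismisses as straightforward.
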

\begin{proof}
	It is straightforward to see that, for every $f\in \Ramsey$, $f\in P$ iff  $\elementpower{n}(f)\in Q$. Moreover, if $g\in\Ramsey$, then
		\[ \elementpower{n}(f)g = i \mapsto n^{fg(i)+1} = \elementpower{n}(fg). \]

	If $f$ is a homogeneous solution that lands in $P$ then $\elementpower{n}(f)g \in Q$ for every $g\in \Ramsey$, i.e.\ $\elementpower{n}(f)\in \HomSol(Q)\cap Q$. Vice versa, if $\elementpower{n}(f)\in \HomSol(Q)\cap Q$ then for every $g\in\Ramsey$ we have $\elementpower{n}(f)g=\elementpower{n}(fg)\in Q$, which implies $fg\in P$.		
\end{proof}

\begin{definition}
	Let $\sigma,\tau\in\incstring$. We define $\sigma\opencodepairing \tau$ to be the set of all strings of the form $\coding{(\rho(i),\theta(i))\st i<N }$ where $\rho,\theta\in\incstring$ and s.t.\ 
	\[ \sigma \prefix\rho \land \tau \prefix\theta \land N=\max\{\abslength{\sigma},\abslength{\tau}\}. \]
	Clearly the map $\opencodepairing$ can be extended to infinite strings by defining
	\[ f\opencodepairing g := \coding{ (f(0), g(0)), (f(1),g(1)), \hdots }. \]

	For the sake of readability, it is convenient to introduce the following notation: for $i=1,2$,  we define
	\[ \pi_i: (\mathbb{N}\times\mathbb{N})^{<\mathbb{N}}\cup (\mathbb{N}\times\mathbb{N})^{\mathbb{N}}\to \finBaire \cup \Baire \]
	as the map that, given in input a finite (resp.\ infinite) string of pairs, returns the finite (resp.\ infinite) string of the $i$-th elements of the pairs.

	Let $\coding{P},\coding{Q}$ be two names for two open subsets of $\Ramsey$. We can define
	\[ \coding{P}\opencodepairing \coding{Q} := \bigcup_{\sigma\in\coding{P}}\bigcup_{\tau\in\coding{Q}} \sigma \opencodepairing \tau, \]
	which is a name for a new open set.

	This leads to a map $\boldfaceSigma^0_1(\Ramsey)\times\boldfaceSigma^0_1(\Ramsey) \rightrightarrows \boldfaceSigma^0_1(\Ramsey)$ defined by
	\[ P\opencodepairing Q := \{ R\in\boldfaceSigma^0_1(\Ramsey)\st \coding{P}\opencodepairing\coding{Q} \text{ is a name for } R \}.  \]
\end{definition}

Notice that, in general, it is not true that if $f$ lies in the open set with name $\coding{P}\opencodepairing\coding{Q}$ then $\pi_i f\in \Ramsey$.

\begin{lemma}
	\thlabel{lem:opencodepairing_properties}
	Let $P_1,P_2\in\boldfaceSigma^0_1(\Ramsey)$ and let $\coding{P_1},\coding{P_2}$ be names for $P_1,P_2$ respectively s.t.\ every string in $\coding{P_1}$ has length at least $2$. Let $P\in\boldfaceSigma^0_1(\Ramsey)$ be the open set with name $\coding{P_1}\opencodepairing\coding{P_2}$. Then
	\[ \HomSol(P)\cap P = \{ f\opencodepairing g\st f\in \HomSol(P_1)\cap P_1 \text{ and } g\in \HomSol(P_2)\cap P_2\}. \]
\end{lemma}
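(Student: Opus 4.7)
The plan is to prove both inclusions separately, with the reverse inclusion being the genuinely delicate one.

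For the inclusion $\supseteq$, I would take $f\in\HomSol(P_1)\cap P_1$ and $g\in\HomSol(P_2)\cap P_2$ and first check that $f\opencodepairing g\in\Ramsey$: since the pairing $\coding{\cdot,\cdot}$ is strictly monotone in each argument and both $f$ and $g$ are strictly increasing, so is $i\mapsto \coding{f(i),g(i)}$. To see $f\opencodepairing g\in P$, I would pick a $\sigma\in\coding{P_1}$ that is a prefix of $f$ and a $\tau\in\coding{P_2}$ that is a prefix of $g$; then with $N=\max\{\abslength{\sigma},\abslength{\tau}\}$, the string $\coding{(f(i),g(i))\st i<N}$ lies in $\sigma\opencodepairing\tau\subset\coding{P_1}\opencodepairing\coding{P_2}$ and is a prefix of $f\opencodepairing g$. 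For homogeneity, the key algebraic identity is $(f\opencodepairing g)h=fh\opencodepairing gh$ for every $h\in\Ramsey$; since $fh\in P_1$ and $gh\in P_2$, the same prefix argument applies to conclude $(f\opencodepairing g)h\in P$.

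For the inclusion $\subseteq$, let $h\in\HomSol(P)\cap P$. Writing $f:=\pi_1\circ h$ and $g:=\pi_2\circ h$, trivially $h=f\opencodepairing g$; the issue is to show that $f,g\in\Ramsey$ (i.e.\ are strictly increasing) and that they give homogeneous solutions. This is where the hypothesis that every string in $\coding{P_1}$ has length at least $2$ enters. For each $k\in\mathbb{N}$, consider the subsequence $h'=h\circ(i\mapsto k+i)\in\Ramsey$, which lies in $P$ by homogeneity. Thus $h'$ has a prefix in $\coding{P_1}\opencodepairing\coding{P_2}$, of the form $\coding{(\rho(i),\theta(i))\st i<N}$ with $N=\max\{\abslength{\sigma},\abslength{\tau}\}\ge\abslength{\sigma}\ge 2$ for some $\sigma\in\coding{P_1}$, $\tau\in\coding{P_2}$, and $\rho,\theta\in\incstring$. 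In particular $h(k)=\coding{\rho(0),\theta(0)}$ and $h(k+1)=\coding{\rho(1),\theta(1)}$ with $\rho(0)<\rho(1)$ and $\theta(0)<\theta(1)$, giving $f(k)<f(k+1)$ and $g(k)<g(k+1)$. Hence $f,g\in\Ramsey$.

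To finish, for any $h'\in\Ramsey$ we have $hh'\in P$, so some prefix of $hh'$ lies in $\coding{P_1}\opencodepairing\coding{P_2}$. The first coordinate of that prefix is of the form $\sigma'$ for some $\sigma'\in\coding{P_1}$ with $\sigma'\prefix fh'$, showing $fh'\in P_1$; symmetrically $gh'\in P_2$. This proves $f\in\HomSol(P_1)\cap P_1$ and $g\in\HomSol(P_2)\cap P_2$. The main obstacle is the argument that $f$ and $g$ are strictly increasing: this is not automatic from $h\in\Ramsey$ alone but genuinely requires applying homogeneity of $h$ to arbitrary shifts together with the length-$2$ hypothesis on $\coding{P_1}$.
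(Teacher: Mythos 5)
Your proof is correct and follows essentially the same route as the paper: the crucial step in both is to use homogeneity of $h$ on its shifts $\coding{h(k),h(k+1),\hdots}$ together with the hypothesis that every string in $\coding{P_1}$ has length at least $2$ to conclude that the projections $\pi_1 h$ and $\pi_2 h$ are strictly increasing, after which both inclusions follow from the identity $(f\opencodepairing g)h'=fh'\opencodepairing gh'$ and the definition of $\coding{P_1}\opencodepairing\coding{P_2}$. The only difference is that you spell out the $\supseteq$ direction, which the paper dismisses as straightforward.
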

\begin{proof}
	Notice first of all that $f\in \HomSol(P)\cap P$ implies that, for $i=1,2$,
	\[  \pi_i f \in \Ramsey.\]
	Indeed, fix $n\in\mathbb{N}$ and consider the substring $g:=\coding{f(n), f(n+1),\hdots}$ of $f$. Since $f$ is homogeneous we have $g\in P$. In particular, there is $\tau=\coding{(\tau_1(i),\tau_2(i))\st i<N } \in \coding{P_1}\opencodepairing\coding{P_2}$ s.t.\ $\tau\pprefix g$. Fix $\sigma_1\in \coding{P_1}$ and $\sigma_2\in\coding{P_2}$ s.t.\ $\tau \in \sigma_1\opencodepairing\sigma_2$. Since $\abslength{\sigma_1}\ge 2$ we have $\abslength{\tau}\ge 2$. Moreover
		\[ (\pi_i f)(n) = \tau_i(0) < \tau_i(1) =(\pi_i f)(n+1). \]
	Let now $f\in \HomSol(P)\cap P$. For every $g\substring f$ we have that $g\in \HomSol(P)\cap P$ and $\pi_1 g\in P_1$, $\pi_2 g \in P_2$ (indeed if $\pi_1 g \notin P_1$ or $\pi_2 g \notin P_2$ then $g\notin P$). Hence  $\pi_i f\in \HomSol(P_i)\cap P_i$ for $i=1,2$. The reverse inclusion is straightforward as if $f_i\in \HomSol(P_i)\cap P_i$ then $f:= f_1 \opencodepairing f_2$ is a homogeneous solution for $P$.
\end{proof}

The following generalizes the tree used by Solovay \cite{Solovay78}.

\begin{definition}
	\thlabel{def:solovay_open}
Let $\phi\colon\mathbb{N}\to\mathbb{N}$ be injective. For every tree $T\in \repincTree$ we define the \textdef{Solovay open set $\solovay{\phi}(T) $} as
\begin{align*}
	\solovay{\phi}(T) := \{ f\in \Ramsey \st {}&{} (\exists k)(\forall \tau \dominated f[k])(\tau \notin T ) \text{ and }\\
	& (\exists n,m\in\mathbb{N})(f(0)= \phi(n) \text{ and } f(1)=\phi(m))\}.
\end{align*}
If $\phi$ is the identity function we drop the subscript.
\end{definition}

It is easy to see that $\solovay{\phi}(T)$ is an open set.

\begin{lemma}
\thlabel{lem:solutions_solovay_open_set}
Let $T\in \repincTree$ and let $W:=\solovay{\phi}(T)$. Then 
\begin{enumerate}
	\item \label{itm:solovay_wf_tree} If $[T]=\emptyset$ then $\HomSol(W)\cap W\neq \emptyset$. Moreover, if $\phi$ is surjective then $W=\Ramsey$ and therefore $\HomSol(W) = \HomSol(W)\cap W = \Ramsey$.
	\item \label{itm:solovay_if_tree} If $[T]\neq\emptyset$ then $\HomSol(W) = \HomSol(W)\setdifference{W}$. Moreover every $f\in \HomSol(W)$ dominates a path through $T$.
\end{enumerate}
\end{lemma}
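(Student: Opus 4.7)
The plan is to treat the two parts separately, using K\"onig's lemma applied to the tree $T_f := \{\tau \in T : \tau \dominated f[\abslength{\tau}]\}$ as the central tool. The key observation driving both parts is that $T_f$ is always finitely branching, since each coordinate of a node in $T_f$ is bounded by the corresponding value of $f$.

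For part 1, with $[T]=\emptyset$, the tree $T_f$ is well-founded (as a subtree of $T$) and finitely branching, hence finite by K\"onig's lemma; so the first defining clause of $W$ holds for every $f$, and in fact for every subsequence of $f$. To witness $\HomSol(W)\cap W \neq \emptyset$, I would take $f$ to enumerate $\phi(\mathbb{N})$ in strictly increasing order (an infinite set because $\phi$ is injective): then every subsequence $fg$ has $\ran(fg)\subseteq \phi(\mathbb{N})$, so in particular $fg(0),fg(1)\in \phi(\mathbb{N})$, placing $fg$ in $W$. When $\phi$ is surjective, the $\phi$-clause is vacuous and the same K\"onig argument gives $W=\Ramsey$, from which the remaining claims are immediate.

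For part 2, I fix a path $p\in[T]$ once and for all. To see $\HomSol(W)\cap W=\emptyset$, I pick $f\in W$ and build a subsequence $fg$ that pointwise dominates $p$: since $f$ is unbounded, $g(i)$ can inductively be chosen as the least index strictly greater than $g(i-1)$ with $f(g(i))\ge p(i)$. Then $p[k]\in T$ and $p[k]\dominated fg[k]$ for every $k$ show $fg\notin W$, contradicting $f\in\HomSol(W)$. For the ``moreover'' assertion, given $f\in \HomSol(W)=\HomSol(W)\setdifference W$, my goal is to show that $T_f$ has nodes of every length, so that K\"onig delivers a path $p\in[T_f]\subseteq[T]$ with $p\dominated f$. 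If the first clause of $W$ already fails for $f$, this is immediate. Otherwise only the $\phi$-clause fails for $f$, and I would exploit homogeneity by passing to a subsequence $fg$ of $f$ whose first two values lie in $\phi(\mathbb{N})$: then $fg\notin W$ forces the first clause to fail for $fg$, after which a careful choice of $g$ lets me transfer unboundedness from $T_{fg}$ back to $T_f$.

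The hard part will be this last transfer. Because $fg\ge f$ pointwise, we have $T_{fg}\supseteq T_f$, so the naive monotonicity is in the wrong direction. Overcoming this requires selecting $g$ so that the levels of $T_{fg}$ remain controlled by those of $T_f$, leveraging both the specific shape of the constructed $g$ and the injectivity of $\phi$.
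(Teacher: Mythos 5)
Your part 1 and the first assertion of part 2 are correct and essentially identical to the paper's argument: the paper also works with the finitely branching tree $T_f=\{\sigma\in T: (\forall i<\abslength{\sigma})(\sigma(i)\le f(i))\}$, applies K\"onig's lemma to get the first clause of $W$ for every $f$ when $[T]=\emptyset$ (and takes $f$ with $\ran(f)\subseteq\ran(\phi)$ to witness $\HomSol(W)\cap W\neq\emptyset$), and kills $\HomSol(W)\cap W$ by passing to a subsequence that dominates a fixed path.

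The gap is exactly the step you yourself flag as ``the hard part,'' and it cannot be closed. For the ``moreover'' the paper simply asserts that $f\in\HomSol(W)\setdifference W$ implies $(\forall k)(\exists \tau\dominated f[k])(\tau\in T)$, i.e.\ that the first clause of $W$ fails for $f$ itself. That is immediate when $f(0),f(1)\in\ran(\phi)$ (in particular when $\phi$ is surjective, which is the only situation in which the paper ever invokes the domination claim), but you are right that it needs justification otherwise — and your repair does not supply one. First, a subsequence $fg$ with $fg(0),fg(1)\in\ran(\phi)$ need not exist, since $\ran(f)$ may be disjoint from $\ran(\phi)$. Second, even when it exists, the transfer back to $f$ is genuinely impossible: $g(i)\ge i$ gives $f(i)\le fg(i)$, so a path dominated by $fg$ need not be dominated by $f$, and no choice of $g$ avoids this. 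In fact the assertion is false for general injective $\phi$: let $T$ have the single path $x(i)=10^{i+1}$, let $\phi(n)=n+1$, and let $f(0)=0$, $f(i)=10^{i+1}$ for $i\ge 1$. Every $fg$ with $g(0)=0$ fails the $\phi$-clause, and every $fg$ with $g(0)\ge 1$ satisfies $fg(i)=10^{g(i)+1}\ge 10^{i+2}>x(i)$ and hence fails the first clause; so $f\in\HomSol(W)\setdifference W$, yet $f$ does not dominate $x$ because $x(0)>f(0)$. The honest fix is to prove the domination claim only under the additional hypothesis that the $\phi$-clause holds for $f$ (e.g.\ $\phi$ surjective): then $f\notin W$ directly forces $T_f$ to be infinite and K\"onig finishes, which is all the paper ever uses.
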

\begin{proof}
For each $f\in\Ramsey$ define the tree
\[ T_f := \{ \sigma \in T \st (\forall i<\abslength{\sigma})(\sigma(i)\le f(i)) \}.\]
\begin{description}
	\item[\ref{itm:solovay_wf_tree}] Notice that, for every $f\in\Ramsey$, the set $T_f$ is a finitely-branching well-founded subtree of $T$. By K\"onig's lemma, $T_f$ must be finite and therefore there is a $k$ s.t.\ every string in $T_f$ has length $<k$. This implies that every $\tau \dominated f[k]$ is not in $T$. If $\ran(f)\subset\ran(\phi)$ (as is always the case if $\phi$ is surjective) then $f\in\HomSol(W) \cap W$ .
	\item[\ref{itm:solovay_if_tree}] Notice that $\HomSol(W)\cap W=\emptyset$ because for every path $x\in [T]$ and every $f\in\Ramsey$, there exists $g\in\Ramsey$ that grows sufficiently quickly s.t.\ $x \dominated fg$ (as proved in \cite[p.\ 108]{Solovay78}). 
	
	Moreover, if $f\in \HomSol(W)\setdifference{W}$ then $(\forall k)(\exists \tau \dominated f[k])(\tau \in T)$. This implies that $T_f$ is infinite and therefore, by K\"onig's lemma, $[T_f]\neq\emptyset$. This concludes the proof as $[T_f]\subset [T]$.\qedhere
\end{description}
\end{proof}

\begin{definition}
Let $\phi\colon\mathbb{N}\to\mathbb{N}$. For every tree $T\in \repincTree$ we
define the clopen set $\describepath{\phi}(T)$ as
\begin{align*}
	\describepath{\phi}(T) := \{ f\in [\mathbb{N}]^{\mathbb{N}} \st (\exists \sigma_0,\sigma_1\in T) (\, & f(0)=\phi(\coding{\sigma_0}) \text{ and } \\
		& f(1)=\phi(\coding{\sigma_1}) \text{ and } \sigma_0\pprefix \sigma_1) \}. 
\end{align*}
If $\phi$ is the identity function we just drop the subscript.
\end{definition}

\begin{lemma}
	\thlabel{lem:clopen_describes_paths} Let $T\in\repincTree$ and let
$D:=\describepath{\phi}(T)$ for some computable strictly increasing function
$\phi\colon\mathbb{N}\to\mathbb{N}$. If $[T]\neq\emptyset$ then $\HomSol(D)\cap D\neq \emptyset$ and there is a uniform computable surjection $\HomSol(D)\cap D \to [T]$. On the other hand, if $[T]=\emptyset$
then $\HomSol(D)\cap D=\emptyset$. 
\end{lemma}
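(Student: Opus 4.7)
The plan is to pivot on a clean characterization: $f \in \HomSol(D)\cap D$ if and only if there is a chain $\sigma_0 \pprefix \sigma_1 \pprefix \cdots$ in $T$ with $f(i) = \phi(\coding{\sigma_i})$ for every $i$. One direction will be immediate from the definition of $D$, since for any $g \in \Ramsey$ the pair $(fg(0),fg(1)) = (\phi(\coding{\sigma_{g(0)}}),\phi(\coding{\sigma_{g(1)}}))$ witnesses $fg \in D$. For the converse, given $i<j$, applying the homogeneity condition to any $g \in \Ramsey$ with $g(0)=i$ and $g(1)=j$ will produce $\tau_0 \pprefix \tau_1$ in $T$ with $f(i)=\phi(\coding{\tau_0})$ and $f(j)=\phi(\coding{\tau_1})$; injectivity of $\phi$ (from strict monotonicity) and of the sequence coding $\coding{\cdot}$ pin these down uniquely, so the $\tau_0, \tau_1$ cohere across all pairs into a single chain $\sigma_0 \pprefix \sigma_1 \pprefix \cdots$. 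The map $\Phi \colon f \mapsto \bigcup_i \sigma_i$ will then be a computable function $\HomSol(D)\cap D \to [T]$, uniform in $T$, $\phi$, and $f$, and will serve as the desired surjection.

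The case $[T] = \emptyset$ will follow immediately from this characterization, since any putative $f \in \HomSol(D)\cap D$ would yield the forbidden path $\Phi(f) \in [T]$.

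For the case $[T] \ne \emptyset$, the plan is to exhibit, uniformly in a path $x \in [T]$, some $f \in \HomSol(D)\cap D$ with $\Phi(f)=x$; this simultaneously proves non-emptiness of $\HomSol(D)\cap D$ and surjectivity of $\Phi$. The hard part, and essentially the only genuine obstacle, is that the naive definition $f(i) := \phi(\coding{x[i+1]})$ need not be strictly increasing, because the standard coding $\coding{\cdot}$ on $\finBaire$ is not in general length-monotonic. I will bypass this by thinning out the prefixes of $x$: set $k_0 := 0$ and inductively let $k_{i+1}$ be the least $k > k_i$ with $\coding{x[k]} > \coding{x[k_i]}$. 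Such a $k$ always exists, since only finitely many strings have code below any fixed bound, so the search terminates computably in $x$. Defining $f(i) := \phi(\coding{x[k_i]})$ then yields a strictly increasing $f \in \Ramsey$ (by strict monotonicity of $\phi$) whose extracted chain is $\{x[k_i]\}_i$, and $\bigcup_i x[k_i] = x$ because $k_i \to \infty$. All remaining verifications are routine.
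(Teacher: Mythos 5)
Your proof is correct and follows essentially the same route as the paper's: build a landing solution from a path by encoding its prefixes, and recover a path from a landing solution by decoding and taking the union of the resulting $\pprefix$-chain. The only divergence is your thinning of the prefixes $x[k_0] \pprefix x[k_1] \pprefix \cdots$ to force monotonicity of the codes; the paper simply sets $h(i)=\phi(\coding{y[i]})$ and leaves the (coding-dependent) fact that $h$ is strictly increasing implicit, so your extra step is a harmless and slightly more careful variant rather than a new idea.
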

\begin{proof}
	Let $y\in [T]$ and define $h\in D$ s.t.\ $h(i)=\phi(\coding{y[i]})$. It is easy to see that $h$ is an homogeneous solution for $D$ landing in $D$, therefore $\HomSol(D)\cap D \neq \emptyset$. Moreover, given any $f$ that lands in $D$ we can compute a path $x$ through $T$ as
	\[ x := \bigcup_{k\in\mathbb{N}} \phi^{-1}f(k). \]
	Indeed, since $f$ lands in $D$ we have that $\phi^{-1}f(k)\in T$ for every $k\in\mathbb{N}$. Moreover, for every $k$, $\coding{f(k),f(k+1),\hdots}\in D$ so that $\phi^{-1}f(k+1)$ is a finite string that properly extends $\phi^{-1}f(k)$. Therefore $x$ is a well-defined function $\mathbb{N}\to\mathbb{N}$. Finally it is easy to see that $x\in [T]$: for every $i$, let $j>i$ s.t.\ there is a $k$ s.t.\ $x[j]=\phi^{-1}f(k)$. By definition of $D$ we have that $\phi^{-1}f(k)\in T$. Since $T$ is a tree, we can conclude that $x[i]\in T$. Notice that $\phi^{-1}$ is computable, therefore $x$ is computable from $f$. Notice also that, if $y$ and $h$ are as in the beginning of this proof, then $y = \bigcup_{k\in\mathbb{N}}\phi^{-1}h(k)$, which proves that the mapping is a surjection.

	On the other hand, if $[T]=\emptyset$ then, for every $f\in D$, there is an $i$ s.t.\ $\phi^{-1}(f(i))\notin T$ or $\phi^{-1}(f(i))\not\pprefix \phi^{-1}(f(i+1))$, otherwise we could compute a path through $T$. In any case if $f\in D$ then it is not a homogeneous solution for $D$.
\end{proof}

\begin{lemma}
\thlabel{lem:computable_operations}
The following maps are computable:
\begin{enumerate}
	\item \label{itm:comp_tree} $\boldfaceSigma^0_1(\Ramsey)\to\boldfacePi^0_1(\Ramsey):=P\mapsto\HomSol(P)\setdifference{P} $;
	\item \label{itm:elem_power} $ \elementpower{n}\mfunction{\boldfaceSigma^0_1(\Ramsey)}{\boldfaceSigma^0_1(\Ramsey)}:=P\mapsto\elementpower{n}(P)$, for every $n\in\mathbb{N}$;
	\item \label{itm:opencode_pairing} $\opencodepairing\mfunction{\boldfaceSigma^0_1(\Ramsey)\times\boldfaceSigma^0_1(\Ramsey)}{\boldfaceSigma^0_1(\Ramsey)}:=(P,Q)\mapsto P\opencodepairing Q$;
	\item \label{itm:solovay_closed} $\solovay{\phi}\colon\repincTree\to\boldfaceSigma^0_1(\Ramsey):=T\mapsto \solovay{\phi}(T)$, for every injective map $\phi\colon\mathbb{N}\to\mathbb{N}$ with computable range;
	\item \label{itm:path_description} $\describepath{\phi}\colon\repincTree\to\boldfaceDelta^0_1(\Ramsey):=T\mapsto \describepath{\phi}(T)$, for every invertible map $\phi\colon\mathbb{N}\to\mathbb{N}$ with computable inverse.
\end{enumerate}
\end{lemma}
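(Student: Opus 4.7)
My plan is to treat each of the five items by exhibiting an explicit algorithm that transforms a name of the input into a name of the output. Recall that a name of an open subset $P\subseteq \Ramsey$ is an enumeration of a prefix-free $\coding{P}\subseteq\incstring$ whose proper extensions form $P$, that a closed set is represented via its open complement, and that a clopen set is represented via the join of these two names.

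For \ref{itm:comp_tree} I will exploit \thref{lem:solution_closed_side_iff_path_through_tree}, which gives $\HomSol(P)\setdifference P = [\closedsidesoltree{\coding{P}}]$ independently of the choice of name $\coding{P}$. Rather than trying to compute the tree $\closedsidesoltree{\coding{P}}$ itself, the plan is to enumerate directly the open complement of this closed set, namely the set of $f\in\Ramsey$ admitting some finite subsequence in $P$: every time a string $\tau$ is enumerated into $\coding{P}$, output every $\sigma\in\incstring$ with $\tau\finiteSubstring\sigma$. Correctness follows because any $f$ with a subsequence in $P$ has some finite $\tau\finiteSubstring f$ eventually enumerated into $\coding{P}$, whereupon suitably long prefixes of $f$ are output. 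Items \ref{itm:elem_power} and \ref{itm:opencode_pairing} are then immediate from the definitions: apply $\elementpower{n}$ componentwise to each string enumerated in $\coding{P}$; respectively, enumerate $\sigma\opencodepairing\tau$ for every pair $(\sigma,\tau)$ drawn from $\coding{P}\times\coding{Q}$.

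For \ref{itm:solovay_closed}, given the characteristic function of $T$, the plan is to enumerate $\sigma\in\incstring$ with $\abslength{\sigma}\geq 2$ into a name of $\solovay{\phi}(T)$ as soon as $\sigma(0),\sigma(1)\in\ran(\phi)$ (decidable since $\ran(\phi)$ is computable) and there exists $k\leq\abslength{\sigma}$ such that every strictly increasing $\tau\dominated\sigma[k]$ lies outside $T$ (a finite check, since there are finitely many such $\tau$). By \thref{def:solovay_open} this is exactly the condition that every extension of $\sigma$ lies in $\solovay{\phi}(T)$. For \ref{itm:path_description}, the key observation is that membership of $f$ in $\describepath{\phi}(T)$ depends only on $f(0)$ and $f(1)$ and is decidable from $T$ and the computable inverse $\phi^{-1}$: one checks whether $\phi^{-1}(f(0))$ and $\phi^{-1}(f(1))$ both lie in $T$ with the former a proper prefix of the latter. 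Hence one can simultaneously enumerate a $\boldfaceSigma^0_1$ name and a $\boldfacePi^0_1$ name of $\describepath{\phi}(T)$ (by listing, respectively, pairs of first two values satisfying or failing this condition), which together yield the required $\boldfaceDelta^0_1$ name. The only conceptual subtlety throughout is in \ref{itm:comp_tree}, where membership in $\closedsidesoltree{\coding{P}}$ itself is merely co-c.e.; but this causes no difficulty because closed sets are represented through their open complement, which is precisely what my construction computes.
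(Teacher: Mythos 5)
Your proof is correct and follows essentially the same route as the paper's: items \ref{itm:elem_power}, \ref{itm:opencode_pairing} and \ref{itm:path_description} are handled identically, item \ref{itm:solovay_closed} rests on the same observation that the domination quantifier is bounded and membership in $\ran(\phi)$ is decidable, and item \ref{itm:comp_tree} rests on \thref{lem:solution_closed_side_iff_path_through_tree} together with the effectivity of enumerating the complement. The only cosmetic difference is that for item \ref{itm:comp_tree} you produce a $\boldfaceSigma^0_1$-name of the complement directly, whereas the paper produces (a delayed, decidable version of) the tree $\closedsidesoltree{\coding{P}}$ and invokes the equivalence of the two representations of $\boldfacePi^0_1(\Ramsey)$; these are interchangeable.
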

\begin{proof}
${}$
\begin{description}
	\item[\ref{itm:comp_tree}] Let $P\subset \Ramsey$ be open and let $\coding{P}$ be a name for $P$. The definition of $\closedsidesoltree{\coding{P}}$ is computable in $\coding{P}$. Moreover, $x\in [\closedsidesoltree{\coding{P}}]$ iff $x\in \HomSol(P)\setdifference{P}$ (see \thref{lem:solution_closed_side_iff_path_through_tree}). Since a name for $A\in\boldfacePi^0_1(\Ramsey)$ can be a tree $T$ s.t.\ $A=[T]$, the claim follows.
	\item[\ref{itm:elem_power}, \ref{itm:opencode_pairing}] Straightforward from the definition.
	\item[\ref{itm:solovay_closed}] Let $T\in \repincTree$ be represented by its characteristic function $\charfun{T}$. We can define
	\begin{align*}
		\coding{\solovay{\phi}(T)} := \{ \sigma\in \incstring \st {}&{} (\forall \tau \dominated \sigma)(\tau \notin T) \text{ and }\\
		& (\exists n,m\in\mathbb{N})(\sigma(0)= \phi(n) \text{ and } \sigma(1)=\phi(m))\}.
	\end{align*}
	Notice that the universal quantifier is bounded, while the formula in the scope of the existential quantifier is equivalent to requiring that $\sigma(0)$ and $\sigma(1)$ are in $\ran(\phi)$, which is computable by hypothesis. Therefore $\coding{\solovay{\phi}(T)}$ is computable in $T$. 
	\item[\ref{itm:path_description}] Let $T\in \repincTree$. By definition of $\describepath{\phi}(T)$, the basic clopen cone $\{f\in\Ramsey\st \tau\pprefix f \}$ is a subset of $\describepath{\phi}(T)$ iff 
	\[ \phi^{-1}\tau(0)\in T \text{ and }\phi^{-1}\tau(1)\in T \text{ and }\phi^{-1}\tau(0)\pprefix \phi^{-1}\tau(1).\] 
	In particular, this shows that we can $T$-computably obtain open names for $\describepath{\phi}(T)$ and its complement. \qedhere
\end{description}
\end{proof}

\section{Ramsey theorems in the Weihrauch lattice}
\label{sec:rt_wei}
\subsection{Definitions}
There are several ways to formalize the open Ramsey theorem as a multivalued function.
\begin{definition}[Open Ramsey Theorem]
\label{def:openRT}
 We define the \textdef{full version} of the open Ramsey theorem as the (total) multivalued function
\[ \openRamsey\mfunction{\boldfaceSigma^0_1(\Ramsey)}{\Ramsey} := P \mapsto \HomSol(P).\]
We may modify the full version by adding the requirement on ``which side" we want the solution to be in. In this case, however, we need to restrict the domain to the family of open sets that admit a solution. We can define the \textdef{strong versions} of the open Ramsey theorem as the multivalued functions $\findHopen, \findHclosed \pmfunction{\boldfaceSigma^0_1(\Ramsey)}{\Ramsey}$ with domain respectively
\[ \dom(\findHopen):=\{P\in \boldfaceSigma^0_1(\Ramsey) \st \HomSol(P)\cap P \neq \emptyset\}, \]
\[ \dom(\findHclosed):=\{P\in \boldfaceSigma^0_1(\Ramsey) \st \HomSol(P)\setdifference P \neq \emptyset \} \]
and defined as $\findHopen(P):= \HomSol(P)\cap P$ and $\findHclosed(P):= \HomSol(P)\setdifference P$.
We may strengthen further the requirements, defining the \textdef{weak versions} of the open Ramsey theorem: namely we define $\wfindHopen$ as the restriction of $\findHopen$ to
\[ \dom(\wfindHopen):=\{ P \in \boldfaceSigma^0_1(\Ramsey) \st \HomSol(P)\subset P \}. \]
Similarly we can define the weak version of $\findHclosed$ as the multivalued function $\wfindHclosed$ obtained by restricting $\findHclosed$ to
\[ \dom(\wfindHclosed):=\{ P \in \boldfaceSigma^0_1(\Ramsey) \st \HomSol(P)\cap P =\emptyset \}. \]
\end{definition}

Recall that, in general, an open set can have both solutions that land in the set and solutions that avoid the set. The domain of $\wfindHopen$ (resp.\ $\wfindHclosed$) is therefore strictly smaller than the domain of $\findHopen$ (resp.\ $\findHclosed$). As we will see the two versions exhibit very different behaviors. Notice also that the weak versions are restrictions of $\openRamsey$, while the strong versions are not (the set of solutions can be strictly smaller).

As in the case of the open Ramsey theorem, we can consider different multivalued functions corresponding to the clopen Ramsey theorem. 
\begin{definition}[Clopen Ramsey Theorem]
We define the full version of the clopen Ramsey theorem as the multivalued function $\clopenRamsey\mfunction{\boldfaceDelta^0_1(\Ramsey)}{\Ramsey}:= D\mapsto \HomSol(D)$. 

The strong version of the clopen Ramsey theorem is the multivalued function
	\[ \findHclopen\pmfunction{\boldfaceDelta^0_1(\Ramsey)}{\Ramsey}:=D\mapsto \HomSol(D)\cap D\]
with domain
	\[ \dom(\findHclopen):=\{ D\in\boldfaceDelta^0_1(\Ramsey)  \st \HomSol(D)\cap D \neq \emptyset \}. \]

The weak version of the clopen Ramsey theorem is the multivalued function $\wfindHclopen\pmfunction{\boldfaceDelta^0_1(\Ramsey)}{\Ramsey}$ defined as the restriction of $\findHclopen$ to
	\[ \dom(\wfindHclopen):=\{ D \in \boldfaceDelta^0_1(\Ramsey)\st \HomSol(D) \subset D\}. \]
\end{definition}

Notice that we defined only one strong and one weak version of the clopen Ramsey theorem. This is because, using \thref{lem:computable_operations_open}.\ref{itm:compl_clopen_set}, it is straightforward to see that the other two are (strongly) Weihrauch equivalent to the ones we defined.

\subsection{Problems reducible to  \texorpdfstring{$\UCBaire$}{\mathsf{UC}}}
\label{subsec:ucbaire}

We show that $\wfindHopen$, $\wfindHclopen$ and $\clopenRamsey$ are all Weihrauch equivalent to $\UCBaire$. None of these principles are strongly Weihrauch equivalent to $\UCBaire$, as we will show in \thref{thm:strong_wei_characterization}.

\begin{lemma}
\thlabel{thm:open<ucbaire}
$ \wfindHopen \weireducible \UCBaire $.
\end{lemma}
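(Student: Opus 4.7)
The plan hinges on the observation that every element of $\dom(\wfindHopen)$ carries a well-founded tree: since $\HomSol(P)\subset P$, \thref{lem:solution_closed_side_iff_path_through_tree} gives $[\closedsidesoltree{\coding{P}}]=\emptyset$, so $\closedsidesoltree{\coding{P}}$ is well-founded, and by \thref{lem:computable_operations} it is obtainable computably from any name of $P$. Hence $\KB(\closedsidesoltree{\coding{P}})$ is a well-order relative to $P$, and this is the resource driving the construction.

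I would use the equivalence $\UCBaire\weiequiv\codedUChoice{\boldfaceSigma^1_1}{}{\Baire}$ and construct, uniformly in a name of $P$, a $\boldfaceSigma^1_1$ singleton whose unique element encodes a canonical homogeneous solution $f\in\HomSol(P)\cap P$. The canonical $f$ is produced by a deterministic Galvin--Prikry-style transfinite recursion along $\KB(\closedsidesoltree{\coding{P}})$: at each stage the next value of $f$ is the least natural number (larger than the previous one) for which the residual open Ramsey problem above the current finite prefix still lies in $\dom(\wfindHopen)$. Well-foundedness ensures that such a choice always exists (the associated rank strictly drops), and the ``least'' clause makes $f$ unique; packaging the rank function of $\closedsidesoltree{\coding{P}}$ together with the stage-by-stage choices as a $\boldfaceSigma^1_1$ singleton and applying $\codedUChoice{\boldfaceSigma^1_1}{}{\Baire}$ then recovers $f$.

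The hard part is verifying that the recursive step preserves the invariant, namely that after committing to $f(0),\ldots,f(n)$ the residual open set (once reindexed into an honest element of $\boldfaceSigma^0_1(\Ramsey)$) remains in $\dom(\wfindHopen)$, so that the construction can continue indefinitely and the produced $f$ does land in $P$. This is the computational content of the classical proof of Nash-Williams' theorem in $\mathrm{ATR}_0$ used by Avigad~\cite{Avigad98}, and the $\boldfaceSigma^1_1$-certification of the deterministic procedure relies on the uniqueness of the rank function of a well-founded tree together with the computability of $\KB$ on $\repincTree$.
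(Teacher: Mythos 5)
Your opening observation is correct and matches the paper: for $P\in\dom(\wfindHopen)$ the tree $\closedsidesoltree{\coding{P}}$ is well-founded by \thref{lem:solution_closed_side_iff_path_through_tree}, it is computable from a name of $P$, and $\KB(\closedsidesoltree{\coding{P}})$ is a well-order; and targeting $\codedUChoice{\boldfaceSigma^1_1}{}{\Baire}$ instead of $\ATR$ is in principle a legitimate route, since both are equivalent to $\UCBaire$. But the construction of your $\boldfaceSigma^1_1$ singleton has a genuine gap at exactly the point you flag as ``the hard part'': the greedy invariant is not the computational content of Avigad's proof, and you give no argument that it can be preserved. First, the ``residual open Ramsey problem above the prefix $f[n]$'' must account for \emph{all} finite subsequences $\tau\finiteSubstring f[n]$ (homogeneity of the final $f$ concerns all of its subsequences, not just its tails), so already the object whose membership in $\dom(\wfindHopen)$ you are testing is the intersection $\bigcap_{\tau\finiteSubstring f[n]}\{g: \tau\concat g\in P\}\cap P$ relativized to a tail of $\mathbb{N}$. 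Second, and more seriously, there is no reason why, given that this residual set is in $\dom(\wfindHopen)$, some next value $m$ keeps it in $\dom(\wfindHopen)$: unwinding the negation only yields, for each candidate $m$, an infinite $g_m$ landing in $P$ all of whose subsequences $h$ satisfy $\coding{m}\concat h\notin P$, and the string $\coding{m}\concat g_m$ is then simply non-homogeneous --- no contradiction with $P\in\dom(\wfindHopen)$ arises. The classical proofs (Galvin--Prikry combinatorial forcing, and Avigad's $\mathrm{ATR}_0$ version) avoid this precisely by relativizing the notion of ``good'' to a transfinite, $\subseteq^*$-decreasing family of infinite sets $U_\sigma$ built by recursion along $\KB(\closedsidesoltree{\coding{P}})$, and by extracting the solution as a selection through these sets rather than from full tails of $\mathbb{N}$. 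Your appeal to ``the associated rank strictly drops'' does not repair this: the prefixes of a solution landing in $P$ leave the tree $\closedsidesoltree{\coding{P}}$ after finitely many steps, so there is no rank to drop, and in any case the rank of a node of that tree does not certify that a residual problem is in $\dom(\wfindHopen)$.

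By contrast, the paper's proof takes the data that your greedy recursion is missing --- the sets $U_\sigma$ and the good/bad labels, defined by diagonal intersection at limit stages of $\KB(\closedsidesoltree{\coding{P}})$ --- and obtains them in one application of $\ATR$ (in Goh's formulation $\ATR\colon\WO\times\Cantor\times\mathbb{N}\to\Cantor$, which is Weihrauch equivalent to $\UCBaire$), after which the solution is computed from the hierarchy exactly as in \cite[Lem.\ V.9.4]{Simpson09}. If you want to keep your $\boldfaceSigma^1_1$-singleton framing, you would still have to encode this entire hierarchy (not a pointwise greedy choice) into the singleton, at which point you are essentially re-deriving $\ATR\weireducible\codedUChoice{\boldfaceSigma^1_1}{}{\Baire}$; as written, the proposal does not establish the lemma.
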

\begin{proof}
	Since $\ATR \weiequiv \UCBaire$ \cite{KMP20} it suffices to prove that  $\wfindHopen\weireducible\ATR$. Here we use the formulation of $\ATR$ given in \cite{gohatr}, namely $\ATR$ is defined as the function $\ATR : \WO\times \Cantor \times \mathbb{N} \to \Cantor$ mapping a countable well-order $L$, a set $A\subset \mathbb{N}$ and (a code for) an arithmetic formula $\theta(n,W,V)$ to the unique set $Y\subset \mathbb{N}$ satisfying $\atrformula_\theta(L,Y,A)$, where $\atrformula_\theta(X,S,T)$ is the arithmetic formula defined in \cite[Def.\ V.2.2]{Simpson09}, asserting that $X$ is a linear order and $S$ is the result of iterating $\theta$ along $X$ with parameter $T$.
	The proof is the direct translation in the context of Weihrauch reducibility of the proof presented in \cite[Lem.\ V.9.4]{Simpson09}. More details on the construction can be found in the paper where the proof was first presented, i.e.\ \cite[Sec.\ 3]{Avigad98}. Let $P\in\dom(\wfindHopen)$. The proof consists of four steps:
	\begin{enumerate}
		\item \label{item:or1} build the tree $T:=\closedsidesoltree{\coding{P}}$ of homogeneous solutions that avoid $P$;
		\item \label{item:or2} build $\KB(T)$;
		\item \label{item:or3} via arithmetic transfinite recursion along $\KB(T)$, obtain a sequence of infinite sets $\sequence{U_\sigma}{\sigma\in T}$ s.t.\ $U_\sigma \setdifference U_\tau$ is finite whenever $\tau \le_{\KB(T)} \sigma$, and classify each $\sigma \in \incstring$ as ``good'' or ``bad'';  
		\item \label{item:or4} use this classification to build a solution $f$.
	\end{enumerate}
	Notice that steps \ref{item:or1} and \ref{item:or2} are computable (using \thref{lem:computable_operations}.\ref{itm:comp_tree}). For $\sigma\notin T$, we classify $\sigma$ as good if the shortest prefix of $\sigma$ which is not in $T$ belongs to $\coding{P}$, and bad otherwise. For $\sigma\in T$, to define $U_\sigma$ and classify $\sigma$ as good or bad, we first define an infinite set $V_\sigma$ as follows:
	\begin{itemize}
		\item if $\sigma$ is the minimum of $\KB(T)$ then $V_\sigma := \mathbb{N}$;
		\item if $\sigma$ is the successor of $\tau$ in $\KB(T)$ then $V_\sigma := U_\tau$;
		\item if $\sigma$ is a limit in $\KB(T)$ then we define $V_\sigma$ by diagonal intersection: we computably and uniformly find a sequence $\tau_j$ cofinal in $\sigma$. Define
			  \[ \begin{gathered} 	u_0 := \min U_{\tau_0}; \\
								  u_{i+1} :=\min \left\{ \bigcap_{j\le i} U_{\tau_j}\setdifference\{u_j\}\right\};\\
								  V_\sigma := \{ u_i\st i\in\mathbb{N}\}. \end{gathered} \]
	\end{itemize}
	It is easy to verify that $V_\sigma$ is defined by an arithmetic formula. Let
	\[V_\sigma^1:= \{ m\in V_\sigma \st \sigma\concat m \text{ is good}\},\]
	and similarly $V_\sigma^0:= \{ m\in V_\sigma \st \sigma\concat m \text{ is bad}\} = V_\sigma \setdifference V_\sigma^1$. Set
	\[ n \in U_\sigma :\Leftrightarrow ( |V_\sigma^1|=\infty \text{ and } n \in V_\sigma^1) \text{ or } (|V_\sigma^1|<\infty \text{ and } n \in V_\sigma^0 ). \]
	We now classify $\sigma$ as good if $V^1_\sigma$ is infinite, and bad otherwise.
	
	We can obtain the information about $\sequence{U_\sigma}{\sigma\in T}$ and the goodness (or badness) for each $\sigma\in T$ as a name for $Y\in \ATR(\KB(T),P,\theta)$, for an appropriate arithmetic formula $\theta$. 
	
	As in \cite{Simpson09,Avigad98}, one can show that $\coding{}$ is good and compute a solution $f\in \wfindHopen(P)$ from $Y$.
\end{proof}

\begin{lemma}
\thlabel{thm:ucbaire<clopen}
$ \UCBaire \weireducible \wfindHclopen$.
\end{lemma}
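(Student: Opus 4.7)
The plan is to construct, from a tree $T \in \repincTree$ with $[T] = \{y\}$ (obtained from the input $\Pi^0_1$-singleton in $\Baire$ via the canonical computable bijection to $\Ramsey$), a clopen set $D \subset \Ramsey$ satisfying (i) $\HomSol(D) \subset D$, so that $D \in \dom(\wfindHclopen)$, and (ii) $y$ is computable from any $f \in \HomSol(D)$, using $T$ as an oracle in the backward functional. Recall that $\UCBaire$ is equivalent to $\codedUChoice{\boldfaceSigma^1_1}{}{\Baire}$, so in particular it suffices to handle the case of a $\boldfacePi^0_1$-singleton.

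The natural starting point is the clopen set $\describepath{\phi}(T)$ from the construction preceding \thref{lem:clopen_describes_paths}: for a computable bijection $\phi$, any $f \in \HomSol(\describepath{\phi}(T)) \cap \describepath{\phi}(T)$ yields, via $\bigcup_{k} \phi^{-1}(f(k))$, the unique path $y$. The obstruction is that $\HomSol(\describepath{\phi}(T)) \setminus \describepath{\phi}(T)$ is in general nonempty: any $f$ whose range avoids $\phi(\coding{T})$ is homogeneously outside the set. Thus the construction must enlarge $\describepath{\phi}(T)$ by further clopen clauses so that every $f \in \Ramsey$ has some subsequence inside the resulting $D$, while preserving the $y$-encoding property for the homogeneous solutions that end up in $D$.

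The main obstacle is calibrating this augmentation. A naive enlargement, such as adding all $f$ with $\phi^{-1}(f(0)) \notin \coding{T}$, would secure (i) but would destroy (ii): it admits homogeneous solutions whose range misses $\phi(\coding{T})$ entirely and carries no information about $y$. A more delicate design is required, crucially exploiting the uniqueness of the path through $T$; I expect the final $D$ to be pieced together out of $\describepath$-type clauses built over several auxiliary trees derived from $T$, arranged so that every $f \in \Ramsey$ meets one of the clauses along a subsequence, and every clause that can be homogeneously entered still exposes enough nested $T$-strings to reconstruct $y$. Once $D$ is produced, verification splits into a Nash-Williams-style barrier argument for (i) (ensuring no $f$ is homogeneous while avoiding $D$), and, for (ii), a decoding routine that reads $y$ off the chain of $T$-strings encoded by any $f \in \wfindHclopen(D)$.
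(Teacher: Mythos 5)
There is a genuine gap: the clopen set $D$ is never actually constructed. You correctly set up the target ($[T]=\{y\}$, need $D$ with $\HomSol(D)\subset D$ such that every homogeneous solution computes $y$), correctly observe that $\describepath{\phi}(T)$ alone fails because it has homogeneous solutions avoiding it, and correctly identify that the whole difficulty is to enlarge $D$ to be Ramsey-large without admitting homogeneous solutions that carry no information about $y$. But at exactly that point the proposal substitutes a wish list for an argument (``I expect the final $D$ to be pieced together out of $\describepath$-type clauses built over several auxiliary trees derived from $T$\dots''). No auxiliary trees are specified, no clauses are given, and no reason is offered why such a configuration exists at all. Since resolving this tension \emph{is} the entire content of the lemma, what remains is not a verification detail but the proof itself.

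It is also worth noting that the route you chose is harder than the one the paper takes, and it is not clear it can be completed as stated. The paper does not encode a $\boldfacePi^0_1$-singleton directly; it uses $\SigmaSep\weiequiv\UCBaire$ and then follows the classical construction of \cite[Lem.\ V.9.6]{Simpson09}, where the input is a sequence of \emph{pairs} of trees $(T^0_k,T^1_k)$ with at most one of each pair ill-founded. That pairing is what makes the construction work: the clopen condition compares, along the attempted paths coded by $f$, which tree of each pair ``dies first,'' and this comparison is simultaneously decidable from a finite prefix (hence clopen), forces every $f\in\Ramsey$ to have a subsequence inside the set, and makes every homogeneous solution compute a separating set. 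For a single tree with a unique path there is no second tree to race against, and the uniqueness of $y$ by itself does not supply the needed dichotomy; one would essentially have to re-derive the separation machinery (e.g., by first reducing $\UCBaire$ back to $\SigmaSep$), at which point you are reproducing the paper's proof rather than giving an alternative. As written, the proposal cannot be accepted.
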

\begin{proof}
We follow the proof of the fact that the clopen Ramsey theorem implies $\text{ATR}_0$ over $\text{RCA}_0$ presented in \cite[Lem.\ V.9.6]{Simpson09}. We actually prove the reduction $\SigmaSep \weireducible \wfindHclopen$, as the equivalence $\SigmaSep\weiequiv \UCBaire$ has been proved in \cite{KMP20}.

Let $\sequence{(T^0_k,T^1_k)}{k\in\mathbb{N}}$ be a sequence of pairs of trees s.t.\ for all $k$ at most one of $T^0_k$ and $T^1_k$ has a path (i.e.\ the sequence is a valid input for $\SigmaSep$). Our goal is to find a set $Z$ s.t.\ if $T^0_k$ has a path then $k\in Z$ and if $T^1_k$ has a path then $k\notin Z$.

Following \cite{Simpson09}, we can uniformly compute from $\sequence{(T^0_k,T^1_k)}{k\in\mathbb{N}}$ a name for a clopen $D\in\dom(\wfindHclopen)$ s.t.\ for every $f\in \wfindHclopen(D)$, $f$ and $\sequence{(T^0_k,T^1_k)}{k\in\mathbb{N}}$ uniformly compute some $Z\in \SigmaSep(\sequence{(T^0_k,T^1_k)}{k\in\mathbb{N}})$.
\end{proof}

\begin{theorem}
\thlabel{cor:wfindclopen=ucbaire}
$\UCBaire \weiequiv \wfindHopen \weiequiv \wfindHclopen$.	
\end{theorem}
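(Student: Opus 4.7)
The plan is to close the Weihrauch cycle by combining the two lemmas just proved with one easy reduction, namely $\wfindHclopen \weireducible \wfindHopen$. Together with $\wfindHopen \weireducible \UCBaire$ from \thref{thm:open<ucbaire} and $\UCBaire \weireducible \wfindHclopen$ from \thref{thm:ucbaire<clopen}, this yields the chain
\[ \UCBaire \weireducible \wfindHclopen \weireducible \wfindHopen \weireducible \UCBaire, \]
which gives the three-way equivalence at once.

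The reduction $\wfindHclopen \weireducible \wfindHopen$ is essentially a type-casting argument. Given an instance $D \in \dom(\wfindHclopen)$, the forward functional simply returns a $\boldfaceSigma^0_1(\Ramsey)$-name of $D$ obtained via the computable inclusion $\boldfaceDelta^0_1(\Ramsey) \hookrightarrow \boldfaceSigma^0_1(\Ramsey)$ from \thref{lem:computable_operations_open}.\ref{itm:includsion_clopen_open}. The domain conditions match exactly: the hypothesis $\HomSol(D)\subseteq D$ for $\wfindHclopen$ is literally the hypothesis $\HomSol(P)\subseteq P$ for $\wfindHopen$ once $P=D$. Hence any homogeneous solution that lands in $D$ (viewed as open) is a valid output for $\wfindHclopen$, and the backward functional can be taken to be the identity. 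This in fact shows $\wfindHclopen \strongweireducible \wfindHopen$.

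Putting these three reductions together gives $\UCBaire \weiequiv \wfindHopen \weiequiv \wfindHclopen$. There is no real obstacle here — the work has already been done in the two preceding lemmas, and the only ``new'' ingredient is the observation that the weak versions on the open and clopen sides share the same domain condition, so passing from a clopen instance to an open instance is immediate.
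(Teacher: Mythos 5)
Your proposal is correct and matches the paper's proof exactly: the paper also closes the cycle $\UCBaire \weireducible \wfindHclopen \weireducible \wfindHopen \weireducible \UCBaire$ using the two preceding lemmas together with the observation that $\wfindHclopen$ is the restriction of $\wfindHopen$ to clopen sets (via the computable inclusion of \thref{lem:computable_operations_open}.\ref{itm:includsion_clopen_open}). Your additional remark that the domain conditions coincide verbatim is precisely why that restriction claim holds.
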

\begin{proof}
This follows from \thref{thm:open<ucbaire}, \thref{thm:ucbaire<clopen} and the fact that $\wfindHclopen$ is the restriction of $\wfindHopen$ to clopen sets.
\end{proof}

\begin{theorem}
\thlabel{thm:ucbaire=clopenramsey}
$\UCBaire \weiequiv \clopenRamsey$.
\end{theorem}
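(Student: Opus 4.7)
The plan is to establish the equivalence via two separate reductions: the direction $\UCBaire \weireducible \clopenRamsey$ will follow immediately from Corollary~\thref{cor:wfindclopen=ucbaire}, while the direction $\clopenRamsey \weireducible \UCBaire$ will require refining the Avigad-style argument used in the proof of Lemma~\thref{thm:open<ucbaire}.

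For the first direction, I would reduce $\wfindHclopen$ to $\clopenRamsey$ directly. The key observation is that $\dom(\wfindHclopen) \subseteq \dom(\clopenRamsey)$ and, for every $D$ in the former domain, the hypothesis $\HomSol(D) \subset D$ forces $\clopenRamsey(D) = \HomSol(D) = \HomSol(D) \cap D = \wfindHclopen(D)$. Hence the identity forward and backward functionals realize $\wfindHclopen \weireducible \clopenRamsey$, and combined with Corollary~\thref{cor:wfindclopen=ucbaire} this yields $\UCBaire \weireducible \clopenRamsey$.

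For the second direction, I would start from a clopen $D$ with complement $E := \setcomplement[\Ramsey]{D}$ and open names $\coding{D}, \coding{E}$, and form the open set $P$ with name $\coding{P} := \coding{D} \cup \coding{E}$. A direct inspection of the definition gives $\closedsidesoltree{\coding{P}} = \closedsidesoltree{\coding{D}} \cap \closedsidesoltree{\coding{E}}$, and Lemma~\thref{lem:solution_closed_side_iff_path_through_tree} together with the disjointness of $D$ and $E$ shows that the paths through this tree lie in $(\HomSol(D) \setdifference D) \cap (\HomSol(E) \setdifference E) = \emptyset$. Hence $P$ lies in $\dom(\wfindHopen)$ and is a legitimate input for the construction of Lemma~\thref{thm:open<ucbaire}.

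Next I would apply a refinement of that Avigad construction: instead of a binary good/bad classification, I would use a three-way one, labelling a finite string $\sigma$ as \emph{$D$-good} (respectively \emph{$E$-good}) when its shortest prefix that leaves $\closedsidesoltree{\coding{P}}$ lies in $\coding{D}$ (resp.\ $\coding{E}$), and as bad otherwise; note that the three alternatives are mutually exclusive because $D \cap E = \emptyset$. The corresponding recursive definitions of $V^{D,1}_\sigma$ and $V^{E,1}_\sigma$ on nodes in $\closedsidesoltree{\coding{P}}$ run in parallel and remain arithmetic in the ATR output along $\KB(\closedsidesoltree{\coding{P}})$, so the whole computation stays within $\ATR \weiequiv \UCBaire$. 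Applying the clopen Ramsey theorem to $D$ itself guarantees that the empty string $\coding{}$ must be $D$-good or $E$-good; following the corresponding branch with the standard Avigad extraction procedure then outputs an $f$ in $\HomSol(D) \cap D$ or in $\HomSol(D) \setdifference D$, respectively, and in either case $f \in \clopenRamsey(D)$.

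The hard part will be the bookkeeping required to merge the refined three-way classification cleanly with the original ATR construction, and in particular to specify a uniformly computable extraction at the root $\coding{}$ that deals with the case where the $D$-good and $E$-good labels both apply simultaneously.
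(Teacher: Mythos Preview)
Your first direction matches the paper exactly. For the second direction, however, the paper takes a genuinely different and considerably simpler route: rather than opening up the Avigad ATR construction and refining the good/bad labelling, the paper shows $\clopenRamsey \weireducible \wfindHclopen$ by a purely combinatorial reduction. Given a name $\coding{p,q}$ for $D$, it builds the clopen set
\[
E := \{ f \in \Ramsey : (\exists \sigma,\tau \in p)(\sigma\concat\tau \pprefix f) \lor (\exists \sigma,\tau \in q)(\sigma\concat\tau \pprefix f) \},
\]
proves that $E \in \dom(\wfindHclopen)$ (no homogeneous solution can avoid $E$) and that $\HomSol(E) = \HomSol(D)$, so that a single call to $\wfindHclopen(E)$ returns a solution for $\clopenRamsey(D)$. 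The equivalence $\wfindHclopen \weiequiv \UCBaire$ is then used as a black box.

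Your approach is plausible, and the well-foundedness of $T_{\coding{D}} \cap T_{\coding{E}}$ is exactly the right observation to make an ATR run legitimate. But what you flag as ``bookkeeping'' is the substantive content: you would need to verify that the three-way labelling propagates correctly through the diagonal-intersection steps, and in particular that when $\coding{}$ is $D$-good the extracted $f$ has \emph{every} subsequence eventually exiting $T$ through a $D$-good leaf (not merely a good one). This is not automatic from the original Avigad argument, since the nesting properties of the $U_\sigma$ now have to respect the finer partition. The paper's construction sidesteps all of this by never touching the ATR internals; its cost is the slightly delicate combinatorial verification that $\HomSol(E) = \HomSol(D)$, but that argument is elementary and self-contained.
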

\begin{proof}
By \thref{cor:wfindclopen=ucbaire} it suffices to prove that $\wfindHclopen\weiequiv \clopenRamsey$. The
reduction $\wfindHclopen \weireducible \clopenRamsey$ is trivial (the former
is a restriction of the latter).

Let us prove the reverse reduction. By \thref{prop:galvin_prikry_subset}, for every open
$P\subset\Ramsey$, if $f\notin \HomSol(P)$ then there is a $g\substring f$
s.t.\ $g\in \HomSol(P)$. This implies that the set $\setcomplement[\Ramsey]{\HomSol(P)}$ has no homogeneous
solution that lies in itself.

Let $D\in \boldfaceDelta^0_1(\Ramsey)$ and fix a name $\coding{p,q}$ for $D$. Consider the set
\[ E:= \{ f\in\Ramsey \st (\exists \sigma,\tau\in p)(\sigma\concat\tau \pprefix f) \lor (\exists \sigma,\tau\in q)(\sigma\concat\tau \pprefix f)\}. \]
It is clear that $E$ is a clopen set and a name for $E$ is computable from $\coding{p,q}$.

Notice also that $\HomSol(D) \subset E$. Indeed, let $f$ be a homogeneous
solution for $D$ and assume first that $f\in D$. Since $D$ is open there
must be a $\sigma\in p$ s.t.\ $\sigma\pprefix f$. Moreover,
since $f$ is a homogeneous solution,
$g:=\coding{f(\abslength{\sigma}), f(\abslength{\sigma}+1),\hdots}$ must
again be in $D$, hence there must be a $\tau\in p$ s.t.\ $\tau \pprefix g$. This implies that $\sigma\concat\tau \pprefix f$, i.e.\
$f\in E$. The case $f\in \setcomplement[\Ramsey]{D}$ is analogous by
replacing $D$ with $\setcomplement[\Ramsey]{D}$.

Moreover we can notice that there are no homogeneous solutions that land in
$\setcomplement[\Ramsey]{E}$, i.e.\ $E\in\dom(\wfindHclopen)$. Indeed assume that $f$ avoids $E$ and let $\sigma,\tau\in\incstring$ be s.t.\ $\sigma\concat\tau \pprefix f$ and $\sigma\in p$ and $\tau\in q$ (the case in which $\tau\in p$ and $\sigma\in q$ is analogous). Let also $g:=\coding{f(\abslength{\sigma}), f(\abslength{\sigma}+1), \hdots}$. Since $g\substring f$ and $f$ is homogeneous there must be $\rho\in p$ s.t.\ $\tau\concat\rho \pprefix g$. We can now notice that the subsequence $h$ of $f$ defined as 
\[ h:= \coding{f(0),\hdots,f(\abslength{\sigma}-1), f(\abslength{\sigma}+\abslength{\tau}),f(\abslength{\sigma}+\abslength{\tau}+1),\hdots } \] 
is s.t.\ $\sigma\concat\rho\pprefix h$, and therefore $h\in E$, contradicting the fact that $f$ avoids $E$.

We now claim that $\HomSol(D) = \HomSol(E)$. Once the claim is proved we can
use $\wfindHclopen (E) = \clopenRamsey(D)$ to finish the reduction.

It is straightforward to notice that $\HomSol(D)\subset E$ implies $\HomSol(D)\subset\HomSol(E)$, hence we only need to prove the inclusion $\HomSol(E)\subset\HomSol(D)$. Let $f\in \HomSol(E)$. Since $E\in\dom(\wfindHclopen)$ we must have $f\in E$. Assume w.l.o.g.\ that $f\in D$ and let $\sigma\pprefix f$ be s.t.\ $\sigma\in p$. Fix $g\substring f$ and let $\rho\pprefix g$ be s.t.\ $\rho \in p \cup q$. Let $k\in \mathbb{N}$ be s.t.\ $\max \rho < f(k)$ and $\max \sigma < f(k)$, and consider $\tau \in p \cup q$ be s.t.\ $\tau \pprefix \coding{f(k), f(k+1), \hdots}$. By the homogeneity of $f$ we have that $\tau \in p$ (otherwise every substring of $f$ that begins with $\sigma\concat \tau$ would not be in $E$). Let $h\substring f$ be s.t.\ $\rho\concat \tau\pprefix h$. Again, by the homogeneity of $f$ we have that $\rho \in p$, hence $g\in D$. Since $g$ was  arbitrary, we have that $f\in \HomSol(D)$.
\end{proof}

\subsection{Problems reducible to \texorpdfstring{$\CBaire$}{\mathsf{C}}}
\label{subsec:cbaire}
Here we consider $\wfindHclosed$, $\findHclosed$ and $\findHclopen$.

\begin{theorem}
	\thlabel{thm:findclopen=findclosed=cbaire} 
	$\CBaire \strongweiequiv \findHclopen \strongweiequiv \findHclosed$.
\end{theorem}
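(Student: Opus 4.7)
The plan is to prove the cycle $\findHclopen \strongweireducible \findHclosed \strongweireducible \CBaire \strongweireducible \findHclopen$, which will yield strong Weihrauch equivalence of all three.

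For $\findHclosed \strongweireducible \CBaire$, the forward functional sends an open $P$ to the closed set $\HomSol(P)\setdifference P$, computable by \thref{lem:computable_operations}.\ref{itm:comp_tree}; by \thref{lem:solution_closed_side_iff_path_through_tree} this is exactly the set of homogeneous solutions of $P$ that avoid $P$, which is nonempty on $\dom(\findHclosed)$. Any closed subset of $\Ramsey$ coded by a tree in $\incstring$ is automatically closed in $\Baire$, so it is a valid input to $\CBaire$. The backward functional is the identity: any point returned by $\CBaire$ is itself a valid $\findHclosed(P)$-output, so the reduction is strong.

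For $\findHclopen \strongweireducible \findHclosed$, I use that for clopen $D$ we have $\HomSol(D) = \HomSol(\setcomplement[\Ramsey]{D})$, whence $\HomSol(D)\cap D = \HomSol(\setcomplement[\Ramsey]{D})\setdifference \setcomplement[\Ramsey]{D}$. The forward functional computes a name for $\setcomplement[\Ramsey]{D}$ as an open set via \thref{lem:computable_operations_open}.\ref{itm:compl_clopen_set} followed by \ref{itm:includsion_clopen_open}, and the backward is the identity.

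For $\CBaire \strongweireducible \findHclopen$, given a tree $T\subset\finBaire$ with $[T]\neq\emptyset$, I first transport $T$ to a tree $T'\in\repincTree$ via the canonical computable bijection $\Baire\to\Ramsey$ mentioned at the start of Section~\ref{sec:ramsey}, ensuring that the induced map $[T']\to[T]$ is computable with computable inverse. Fixing a computable, strictly increasing $\phi\colon \mathbb{N}\to\mathbb{N}$ with computable inverse, \thref{lem:computable_operations}.\ref{itm:path_description} lets the forward functional produce the clopen set $D := \describepath{\phi}(T')$. By \thref{lem:clopen_describes_paths} we get $D\in\dom(\findHclopen)$ together with a uniform computable surjection $\HomSol(D)\cap D \to [T']$. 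The backward functional composes this surjection with the bijection $[T']\to [T]$, producing a point in $[T]$ from any $\findHclopen(D)$-output; since it uses only the oracle output together with the fixed $\phi$ and bijection, the reduction is strong. The main technical point to check is the alignment of the encodings --- that closed subsets of $\Ramsey$ double as closed subsets of $\Baire$ and that $\describepath{\phi}(T')$ genuinely lies in $\boldfaceDelta^0_1(\Ramsey)$ --- but these follow from the canonical bijection between $\Baire$ and $\Ramsey$, with the combinatorial content already packaged into the lemmas of Section~\ref{sec:ramsey}, so no substantial obstacle remains.
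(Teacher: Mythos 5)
Your proposal is correct and follows essentially the same route as the paper: $\findHclosed\strongweireducible\CBaire$ via the tree of avoiding solutions (\thref{lem:computable_operations}.\ref{itm:comp_tree}), $\findHclopen\strongweireducible\findHclosed$ by complementation (which the paper dismisses as trivial), and $\CBaire\strongweireducible\findHclopen$ via $\describepath{\phi}$ and \thref{lem:clopen_describes_paths}. The only difference is that you spell out the encoding details (the bijection $\Baire\to\Ramsey$ and the choice of $\phi$) that the paper leaves implicit.
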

\begin{proof}
	We first show that $\findHclosed\strongweireducible \CBaire$. Given a name $\coding{P}$ for some open set $P\in\dom(\findHclosed)$, by \thref{lem:computable_operations}.\ref{itm:comp_tree} we can compute a name for the closed set $\HomSol(P)\setdifference P$, which by hypothesis is nonempty. Therefore we can use $\CBaire$ to pick a solution.

	Since $\findHclopen\strongweireducible \findHclosed$ is trivial, it remains to show that $\CBaire \strongweireducible \findHclopen$.
	Let $T\subset\incstring$ be s.t.\ $[T]\neq \emptyset$ and let
	$D:=\describepath{}(T)$, i.e.
	\[ D = \{ f\in [\finBaire]^\mathbb{N}\st f(0)\in T \text{ and } f(1)\in T \text{ and } f(0)\prefix f(1) \}. \]
	Recall that $D$ is computable from $T$ (see	\thref{lem:computable_operations}.\ref{itm:path_description}). Moreover, by \thref{lem:clopen_describes_paths}, we have that $D\in\dom(\findHclopen)$ and that every $f\in\findHclopen(D)$ uniformly computes a path through $T$.
\end{proof}

\begin{proposition}
	\thlabel{prop:ucbaire<wfindHclosed}
	$\UCBaire \strictlyweireducible \wfindHclosed$.
\end{proposition}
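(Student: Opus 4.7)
The plan is to prove the two directions of the proposition separately.

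For the reduction $\UCBaire \weireducible \wfindHclosed$, I would compose $\UCBaire \weiequiv \wfindHclopen$ from \thref{cor:wfindclopen=ucbaire} with a reduction $\wfindHclopen \weireducible \wfindHclosed$ by clopen complementation. Given a name for a clopen $D \in \dom(\wfindHclopen)$ (so $\HomSol(D) \subseteq D$), the forward functional will use \thref{lem:computable_operations_open}.\ref{itm:compl_clopen_set} to compute a name for the clopen set $E := \setcomplement[\Ramsey]{D}$, which is in particular open. Since $\HomSol(E) = \HomSol(D) \subseteq D = \setcomplement[\Ramsey]{E}$, we have $\HomSol(E) \cap E = \emptyset$, placing $E$ in $\dom(\wfindHclosed)$ and giving $\wfindHclosed(E) = \HomSol(E) \setminus E = \HomSol(D) \cap D = \wfindHclopen(D)$. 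The identity then serves as the backward functional.

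For the strict part $\wfindHclosed \not\weireducible \UCBaire$, I would invoke \thref{thm:ucbaire_hyperarithmetic_solution}: it suffices to exhibit a computable $P \in \dom(\wfindHclosed)$ all of whose solutions are non-hyperarithmetic. My plan is to use the Solovay construction. Pick a computable tree $T \in \repincTree$ with $[T] \neq \emptyset$ such that no hyperarithmetic $f \in \Ramsey$ dominates any path in $[T]$, and set $P := \solovay{}(T)$. By \thref{lem:computable_operations}.\ref{itm:solovay_closed}, $P$ is computable; by \thref{lem:solutions_solovay_open_set}.\ref{itm:solovay_if_tree}, $\HomSol(P) \cap P = \emptyset$ (so $P \in \dom(\wfindHclosed)$) and every solution dominates a path of $T$, so by the defining property of $T$ no such solution can be hyperarithmetic.

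The hard part will be producing the tree $T$: it must be computable, yet every path of $[T]$ must escape every hyperarithmetic majorant. A direct diagonalization is unavailable since the collection of hyperarithmetic functions is only $\Pi^1_1$; I would instead rely on a basis-style argument from higher recursion theory, constructing $T$ as a $\Pi^0_1$ subclass of the (comeager) set of hyperarithmetically hyperimmune functions via a Spector-style forcing that encodes a universal $\Pi^1_1$ witness into the tree's branching constraints. This is where I expect the main technical difficulty to lie.
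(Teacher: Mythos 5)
Your overall strategy coincides with the paper's. The forward direction is exactly what the paper does: $\UCBaire \weiequiv \wfindHclopen$ by \thref{cor:wfindclopen=ucbaire} followed by clopen complementation via \thref{lem:computable_operations_open}.\ref{itm:compl_clopen_set}, and your verification that $E\in\dom(\wfindHclosed)$ with $\wfindHclosed(E)=\wfindHclopen(D)$ is correct. For strictness the paper simply cites \cite[Sec.\ 3]{Solovay78}, where Solovay exhibits a computable open set all of whose homogeneous solutions avoid it and are non-hyperarithmetic; your plan of realizing such a set as $\solovay{}(T)$ and then applying \thref{thm:ucbaire_hyperarithmetic_solution} is a reconstruction of that argument, so the route is essentially the same.

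The one step you flag as the main difficulty --- producing the tree $T$ --- is where the proposal stops being a proof, but it is also much easier than you make it: no Spector-style forcing or appeal to hyperarithmetic hyperimmunity is needed. Take any computable tree with $[T]\neq\emptyset$ and no hyperarithmetic path; such trees exist precisely because the hyperarithmetic functions are not a basis for $\Pi^0_1$ classes (the fact the paper already invokes from \cite{SacksHRT}), and one can computably convert it into a tree in $\repincTree$ without changing the degrees of its paths. The stronger domination property you ask for then comes for free from K\"onig's lemma: if a hyperarithmetic $f$ dominated some $x\in[T]$, the tree $T_f$ from the proof of \thref{lem:solutions_solovay_open_set} would be an infinite, finitely branching, hyperarithmetic tree, and its leftmost path would be a hyperarithmetic element of $[T]$, a contradiction. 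With that observation inserted your argument is complete; as written, the key object is asserted via a vague construction rather than proved.
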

\begin{proof}
	The reduction $\UCBaire \weireducible \wfindHclosed$ is straightforward
	knowing that $\UCBaire\weiequiv \wfindHclopen$
	(\thref{cor:wfindclopen=ucbaire}). The fact that the reduction is strict
	follows from \cite[Sec.\ 3]{Solovay78}. In particular, Solovay showed that
	there is an open set $W$ with computable code s.t.\ every
	homogeneous solution avoids $W$ (hence $W$ is a valid input for
	$\wfindHclosed$) and is neither $\Sigma^1_1$ nor $\Pi^1_1$ (in particular it
	is not hyperarithmetic), while every computable instance of $\UCBaire$ has an
	hyperarithmetic solution (\thref{thm:ucbaire_hyperarithmetic_solution}).
\end{proof}

\begin{proposition}
\thlabel{prop:wfindHclosed<=cbaire}
	$\wfindHclosed \weireducible \CBaire \weiequiv \CCantor \compproduct \wfindHclosed$.
\end{proposition}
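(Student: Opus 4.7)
The statement splits into $\wfindHclosed \weireducible \CBaire$ together with $\CBaire \weiequiv \CCantor \compproduct \wfindHclosed$, the latter decomposing as two reductions. For $\wfindHclosed \weireducible \CBaire$ my plan is essentially immediate: given $P \in \dom(\wfindHclosed)$, \thref{lem:computable_operations}.\ref{itm:comp_tree} outputs a name for the closed set $\HomSol(P) \setdifference P$, which under the assumption $\HomSol(P) \cap P = \emptyset$ coincides with the nonempty set $\HomSol(P)$ guaranteed by Nash-Williams, so $\CBaire$ picks a point (using that $\Ramsey$ is computably isometric to $\Baire$). For the easy half of the equivalence, $\CCantor \compproduct \wfindHclosed \weireducible \CBaire$ follows from $\CCantor \weireducible \CBaire$, the reduction just proved, and the closure of $\CBaire$ under compositional product recalled in Section~\ref{subsec:wei}.

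The substantive part is $\CBaire \weireducible \CCantor \compproduct \wfindHclosed$. My plan is to route an arbitrary tree $T \subseteq \finBaire$ with $[T] \neq \emptyset$ through a Solovay open subset of $\Ramsey$. First, I would computably replace $T$ by a tree $T' \in \repincTree$ whose paths correspond bijectively and computably to those of $T$, via the computable homeomorphism $\psi\colon \Baire \to \Ramsey$ given by
\[ \psi(f)(i) := i + \sum_{j \le i} f(j), \]
whose inverse is $f(0)=g(0)$, $f(i+1)=g(i+1)-g(i)-1$. Then by \thref{lem:computable_operations}.\ref{itm:solovay_closed} I compute a name for the Solovay open set $W := \solovay{}(T')$; by \thref{lem:solutions_solovay_open_set}.\ref{itm:solovay_if_tree} we have $\HomSol(W) = \HomSol(W) \setdifference W$, so $W \in \dom(\wfindHclosed)$, and every value $f \in \wfindHclosed(W)$ dominates some path through $T'$.

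Given such an $f$, the backward processing forms the subtree $T'_f := \{ \sigma \in T' \st \sigma \dominated f[\abslength{\sigma}] \}$, which is computable from $(T', f)$, finitely branching (the $i$-th level has at most $f(i)+1$ nodes) and, by the conclusion of the lemma together with K\"onig's lemma, infinite. A standard binary coding of its nodes turns $[T'_f]$ into a nonempty closed subset of $\Cantor$; feeding this into $\CCantor$ returns a binary path that decodes to an element of $[T'_f] \subseteq [T']$, and finally $\psi^{-1}$ recovers an element of $[T]$. Packaged as ``preprocess; apply $\wfindHclosed$; form $T'_f$ and encode; apply $\CCantor$; decode'', this is the desired compositional reduction.

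The main obstacle I anticipate is purely bookkeeping: one must verify that $\psi$ is genuinely a computable homeomorphism $\Baire \to \Ramsey$ with computable inverse, that $\psi(T)$ is genuinely a computable element of $\repincTree$ (so that \thref{def:solovay_open}, stated for increasing trees, applies), and that the passage $(f,T') \mapsto $ (binary coding of $T'_f$) is computable. None of these steps is deep, but they are exactly where a careless reduction would fail, so I would set them up explicitly before running the compositional argument.
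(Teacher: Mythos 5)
Your proposal is correct and follows essentially the same route as the paper: the forward direction via the computable tree of avoiding solutions (which is exactly how the paper proves $\findHclosed\strongweireducible\CBaire$ in \thref{thm:findclopen=findclosed=cbaire}), and the substantive reduction $\CBaire\weireducible\CCantor\compproduct\wfindHclosed$ via the Solovay open set, domination of a path, and choice on the finitely branching subtree $T_f$ (the paper packages your binary-coding step as $\CCantor\weiequiv\codedChoice{}{}{[X]}$, citing \cite[Thm.\ 7.23]{BGP17}). The extra bookkeeping you flag (transferring to $\repincTree$, computability of the intermediate functional) is handled implicitly in the paper and is fine as you set it up.
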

\begin{proof}
The first reduction follows from \thref{thm:findclopen=findclosed=cbaire} as $\wfindHclosed$ is the restriction of $\findHclosed$ to a smaller domain.
The reduction $\CCantor \compproduct \wfindHclosed\weireducible\CBaire$ is straightforward from $\CCantor\weireducible\CBaire$, $ \wfindHclosed\weireducible \CBaire$ and $\CBaire$ is closed under compositional product. 

Finally the reduction $\CBaire \weireducible \CCantor \compproduct \wfindHclosed$ is suggested by the proof of the corollary in \cite[Sec.\ 3]{Solovay78}. In particular, given an ill-founded tree $T\subset\Ramsey$ we can computably define the open set $W:=\solovay{}(T)$ (\thref{def:solovay_open}). By \thref{lem:solutions_solovay_open_set}, $W\in\dom(\wfindHclosed)$ and every solution $f\in \wfindHclosed(W)$ dominates a path through $T$. Let $X$ be the subtree of $\finBaire$ of the strings that are dominated by $f$ and let $T_f:= T \cap X$. Since $\emptyset\neq [T_f]\subset [T]$, we can use $\codedChoice{}{}{[X]}([T_f])$ to compute a path through $T$. To conclude the proof it is enough to notice that $\CCantor \weiequiv \codedChoice{}{}{[X]}$ (\cite[Thm.\ 7.23]{BGP17}).
\end{proof}

Notice that, by the choice-elimination principle (\cite[Thm.\ 7.25]{BGP17}), if $Y$ is a computable metric space, $f\pfunction X \to Y$ is a single-valued function and $f\weireducible\CBaire$ then $f\weireducible \wfindHclosed$. 

Since we are not able to show the equivalence of $\wfindHclosed$ with any known principle, it is worth to study its properties.

\begin{proposition}
	\thlabel{prop:wfindclosed_closed_product}
	$\wfindHclosed \strongweiequiv  \wfindHclosed \times \wfindHclosed$.
\end{proposition}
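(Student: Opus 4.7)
The equivalence has two directions. The direction $\wfindHclosed \strongweireducible \wfindHclosed \times \wfindHclosed$ is immediate: the forward functional duplicates its input via $\Phi(p) = \coding{p, p}$, and the backward functional returns the first projection of the output pair.

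For the reverse reduction $\wfindHclosed \times \wfindHclosed \strongweireducible \wfindHclosed$, the plan is to uniformly compute, from a pair of names of open sets $P_1, P_2 \in \dom(\wfindHclosed)$, a single open set $R \in \dom(\wfindHclosed)$ from whose (necessarily avoiding) homogeneous solutions one can uniformly extract homogeneous solutions avoiding $P_1$ and $P_2$ respectively.

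The construction I would attempt first combines $P_1$ and $P_2$ on disjoint supports using the tools of Section \ref{subsec:tools}. Specifically, set $P'_1 := \elementpower{2}(P_1)$ and $P'_2 := \elementpower{3}(P_2)$ via \thref{lem:elementpower_preserves_solutions} and \thref{lem:computable_operations}.\ref{itm:elem_power}, so that the basic open cones of $P'_1$ are determined by initial values in the powers of $2$, while those of $P'_2$ are determined by initial values in the powers of $3$; these supports are disjoint. Define $R := P'_1 \cup P'_2$, which is computable in $(P_1,P_2)$ by \thref{lem:computable_operations_open}.\ref{itm:open_union}. Assuming, as we may, that all strings in $\coding{P_i}$ have length at least $2$, \thref{prop:sets_with_disjoint_solutions} yields
\[
\HomSol(R) \cap R = (\HomSol(P'_1) \cap P'_1) \cup (\HomSol(P'_2) \cap P'_2),
\]
and both terms on the right vanish by \thref{lem:elementpower_preserves_solutions} together with the hypothesis $P_i \in \dom(\wfindHclosed)$. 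Hence $R \in \dom(\wfindHclosed)$.

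For the backward functional, given $h \in \wfindHclosed(R)$, the natural candidate is to take the subsequences $h^{(i)}$ of $h$ consisting of the values lying in the support of $P'_i$ and set $h_i := \elementpower{n_i}^{-1}(h^{(i)})$; the homogeneity of $h$ and the fact that $h \notin R$ then give that each $h_i$ is a homogeneous solution avoiding $P_i$. The principal obstacle I foresee is ensuring that both subsequences $h^{(1)}, h^{(2)}$ are infinite, since a priori $h$ could have only finitely many values in one of the two supports and the extraction would fail on that side. Handling this degeneracy is the delicate step of the proof and is likely to require augmenting $R$ with additional open sets—for instance of Solovay type (\thref{def:solovay_open}) built from the tree of homogeneous solutions to the other $P_j$ and placed on a further disjoint support—carefully chosen to block one-sided $h$ while preserving the condition $\HomSol(R) \cap R = \emptyset$.
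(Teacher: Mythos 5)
Your easy direction is fine, and your verification that $R:=P'_1\cup P'_2\in\dom(\wfindHclosed)$ goes through, but the backward functional has exactly the gap you flag yourself, and it is fatal rather than a removable technicality: a homogeneous solution $h$ avoiding $R$ is under no obligation to contain even a single value in the support of $P'_1$ (for instance $h$ could consist entirely of numbers that are neither powers of $2$ nor powers of $3$, in which case it trivially avoids $R$), and such an $h$ carries no information whatsoever about $P_1$ or $P_2$ --- the fact that no subsequence of $h$ lies in $P'_i$ is then vacuous. So the extraction can fail outright on both sides, and the proposed repair by adjoining further Solovay-type sets is not worked out; it is unclear how to force every avoiding solution of the augmented set to be infinite on both supports while preserving $\HomSol(R)\cap R=\emptyset$.

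The gap disappears once you notice that the disjoint-support machinery is the wrong tool here. For $\wfindHclosed$ you are looking for \emph{avoiding} solutions, and a single string can avoid two sets simultaneously: if $h$ avoids $P_1\cup P_2$ then $h$ avoids both $P_1$ and $P_2$, so the backward functional can simply return two copies of $h$. The paper therefore takes the plain union $R:=P_1\cup P_2$ (computable by \thref{lem:computable_operations_open}.\ref{itm:open_union}) with no relocation of supports; the only thing left to prove is that $P_1\cup P_2\in\dom(\wfindHclosed)$, which is done by applying the relativized open Ramsey theorem (\thref{prop:galvin_prikry_subset}) twice: every $f\in\Ramsey$ has a subsequence $g\substring f$ avoiding $P_1$ (landing is impossible since $P_1\in\dom(\wfindHclosed)$), and $g$ in turn has a subsequence $h$ avoiding $P_2$; then $h\substring f$ witnesses that $f$ cannot land in $P_1\cup P_2$, so $\HomSol(P_1\cup P_2)\cap(P_1\cup P_2)=\emptyset$. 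Disjoint supports, or rather the pairing $\opencodepairing$, are what is needed for $\findHopen\times\findHopen\strongweireducible\findHopen$ (\thref{prop:findHopen_closed_product}), where two genuinely different \emph{landing} solutions must be recovered; that is the situation your construction is implicitly designed for.
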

\begin{proof}
	Notice that if $P,Q\in \dom(\wfindHclosed)$ then $P\cup Q \in \dom(\wfindHclosed)$. Indeed, for every $f\in \Ramsey$, by the open Ramsey theorem applied to $[f]^\mathbb{N}\cap P$ (see \thref{prop:galvin_prikry_subset}), there is a $g\substring f$ s.t.\ $[g]^\mathbb{N}\subset [f]^\mathbb{N}\cap P$ or $[g]^\mathbb{N}\cap [f]^\mathbb{N}\cap P = \emptyset$. In the first case we would have a contradiction as $[g]^\mathbb{N}\subset [f]^\mathbb{N}\cap P$ implies $[g]^\mathbb{N}\subset P$, i.e.\ $\HomSol(P)\cap P \neq \emptyset$, against $P\in\dom(\wfindHclosed)$. Therefore we have $[g]^\mathbb{N}\cap [f]^\mathbb{N}\cap P = [g]^\mathbb{N}\cap P = \emptyset$, i.e.\ $g\in\HomSol(P)\setdifference{P}$. With a similar argument, we can now apply the open Ramsey theorem to $[g]^\mathbb{N}\cap Q$ and conclude that there is a $h\substring g$ s.t.\ $h\in \HomSol(Q)\setdifference{Q}$. In particular $h\notin Q$ and $h\notin P$ (as $h\substring g$ and $g$ avoids $P$). Therefore $h$ is a subsequence of $f$ that is not in $P\cup Q$. Since $f$ was arbitrary, we have that $\HomSol(P\cup Q)\cap (P\cup Q)=\emptyset$. This shows that every homogeneous solution $f\in \HomSol(P\cup Q)$ avoids $P\cup Q$, and, in particular, avoids both $P$ and $Q$. Since the union is computable (see \thref{lem:computable_operations_open}.\ref{itm:open_union}) we can compute a solution for $(\wfindHclosed\times\wfindHclosed)(P,Q)$ by computing $f\in\wfindHclosed(P\cup Q)$ and returning two copies of $f$.
\end{proof}

Let $\sigmaCofChoice\pmfunction{\boldfaceSigma^1_1(\mathbb{N})}{\mathbb{N}}$ be the multivalued function that chooses an element from a non-empty $\boldfaceSigma^1_1$ cofinite subset of $\mathbb{N}$. It is equivalent to assume that the input is a $\boldfaceSigma^1_1$ final segment of $\mathbb{N}$. Indeed, given a cofinite $\boldfaceSigma^1_1$ set $A$ we can computably define the $\boldfaceSigma^1_1$ set 
\[ B:=\{ n\in\mathbb{N} \st (\forall m\ge n)(m\in A)\}, \]
which is a (non-empty) final segment of $\mathbb{N}$. Since $B\subset A$, choosing an element in $B$ yields a solution for $\sigmaCofChoice(A)$. With this in mind, we can assume that an input for $\sigmaCofChoice$ is a sequence $\sequence{T_m}{m\in\mathbb{N}}$ of trees s.t.\ there exists $k$ s.t.\ $[T_i]=\emptyset$ iff $i<k$. 

The problem $\pSigmaCofChoice$ has been studied in \cite{KiharaADauriacChoice} under the name $\boldfaceSigma^1_1\text{-}\mathsf{AC}^{\vphantom{g}\mathsf{cof}}_{\Baire}$. Moreover, \cite{KMP20} (implicitly) uses $\pSigmaCofChoice$ in the proof of Lemma 4.7 to separate $\boldfaceSigma^1_1\text{-}\mathsf{WKL}$ from $\parallelization{\codedChoice{\boldfaceSigma^1_1}{}{\mathbb{N}}}$. It is known that $\pSigmaCofChoice \strictlyweireducible \CBaire$ \cite[Thm.\ 3.34]{KiharaADauriacChoice}. Moreover, $\UCBaire \strictlyweireducible \pSigmaCofChoice$ (\cite{GPVDescSeq}).

\begin{theorem}
	$\pSigmaCofChoice\strongweireducible \wfindHclosed$.
\end{theorem}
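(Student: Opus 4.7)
The plan is to combine all the $\sigmaCofChoice$-instances into a single Solovay open set, using a tree in $\repincTree$ whose paths encode a valid answer for each copy at its own coordinate; coordinate-wise domination, which is exactly what Solovay's construction provides, will then be enough to recover all answers simultaneously.

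Given an input $((T^n_m)_{m\in\mathbb{N}})_{n\in\mathbb{N}}$ for $\pSigmaCofChoice$, with $[T^n_m]=\emptyset \iff m<k_n$, I would first reduce to the case in which each $T^n_m$ is a binary tree (harmless, since any $\boldfaceSigma^1_1$ subset of $\mathbb{N}$ is uniformly presentable by binary witness trees), so that ``$T^n_m$ contains a node of depth $\ell$'' becomes a decidable property. I would then define
\[ S:=\{\sigma\in\incstring : (\forall n<|\sigma|)(T^n_{\sigma(n)}\text{ contains a node of depth }|\sigma|-n)\}, \]
which is uniformly computable and easily checked to be a subtree of $\incstring$ (if a tree contains a node of depth $d$, it contains one of every smaller depth). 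To verify $[S]\neq\emptyset$, I would exhibit $y\in\Ramsey$ defined by $y(0):=k_0$ and $y(n):=\max(k_n,y(n-1)+1)$: $y$ is strictly increasing with $y(n)\geq k_n$, so each $T^n_{y(n)}$ is ill-founded and contains nodes of every depth, whence $y\in[S]$.

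Next, I would set $W:=\solovay{}(S)$; by \thref{lem:computable_operations} this is uniformly computable from $S$, and by \thref{lem:solutions_solovay_open_set} (applied since $[S]\neq\emptyset$) we have $\HomSol(W)\cap W=\emptyset$, so $W\in\dom(\wfindHclosed)$, and every $f\in\HomSol(W)$ dominates some path $y\in[S]$. The backward functional, given any $f\in\wfindHclosed(W)$, will output $(f(n))_{n\in\mathbb{N}}$: for any such $y$ with $y\dominated f$ one has $f(n)\geq y(n)\geq k_n$, so $[T^n_{f(n)}]\neq\emptyset$, i.e., $f(n)$ is a valid witness for the $n$-th copy. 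Since the backward functional reads only $f$, the reduction is strong.

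The only mildly delicate step is the initial passage to binary trees, needed to make the membership condition for $S$ decidable; after that there is no genuine obstacle. The key observation that makes the construction work is that in the cofinite setting \emph{any} upper bound on a valid witness is again valid, so placing the $n$-th witness at the $n$-th coordinate of a path lets coordinate-wise Solovay-domination read off correct answers for all copies at once.
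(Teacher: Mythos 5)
Your overall strategy---dovetail all the instances into a single tree, take the Solovay open set of that tree, and exploit the fact that in the cofinite setting any upper bound on a valid witness is again valid---is exactly the paper's. The fatal problem is the step you call ``mildly delicate'': the passage to binary witness trees is not harmless, it is impossible. The sections of a closed subset of $\mathbb{N}\times\Baire$ are coded by arbitrary trees on $\mathbb{N}$, and for a finitely branching tree $T'$ K\"onig's lemma makes $[T']\neq\emptyset$ equivalent to ``$T'$ is infinite'', an arithmetic condition on the presentation. A uniform computable (or even hyperarithmetic) conversion to binary witness trees would therefore make every $\boldfaceSigma^1_1$ subset of $\mathbb{N}$ arithmetic in its name, which is false (non-emptiness of $[T_m]$ is $\Sigma^1_1$-complete); it would also collapse $\pSigmaCofChoice$ far below where the paper places it, since $\UCBaire\strictlyweireducible\pSigmaCofChoice$. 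Without that step your construction breaks in two places: the tree $S$ is not computable as an element of $\repTree$ (the condition ``$T^n_{\sigma(n)}$ contains a node of depth $|\sigma|-n$'' is only $\Sigma^0_1$), and, more seriously, a path $y\in[S]$ only certifies that each $T^n_{y(n)}$ has nodes of every depth, i.e.\ is infinite, which for infinitely branching trees does not imply ill-foundedness. Hence $[S]$ may contain ``bad'' paths with $y(n)<k_n$; the homogeneous solution $f$ is only guaranteed to dominate \emph{some} path through $S$, and then $f(n)\ge y(n)$ tells you nothing.

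The missing idea, which is how the paper proceeds, is to make ill-foundedness \emph{witnessed inside the path} rather than asserted as a local condition on the nodes of $S$: at the coordinates reserved for the $n$-th instance one places not just an index $m$ but $m$ followed by an actual infinite branch of $T^n_m$ (formally, one dovetails the trees $T_n:=\coding{}\cup\bigcup_{m}\coding{m}\concat T_{n,m}$). The resulting tree is computable from the input, is ill-founded by hypothesis, and every one of its paths carries a genuinely valid index $m\ge k_n$ at position $\coding{n,0}$; domination together with the upward closure of the solution set then finishes the argument exactly as in your last paragraph, and the backward functional still reads only $f$, so the reduction is strong.
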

\begin{proof}
	We will use strings $\sigma$ which are prefixes of an infinite string $f$ obtained by joining countably many strings $g_i$; we write $\sigma = \operatorname{dvt}(\tau_0,\hdots,\tau_n)$ if $\tau_i$ is the prefix of $g_i$ contained in $\sigma$. Formally $\sigma = \operatorname{dvt}(\tau_0,\hdots,\tau_n)$ iff 
	\begin{itemize}
		\item $n = \max\{i: \coding{i,0} < \abslength{\sigma}\}$,
		\item for each $i$, $\abslength{\tau_i} = \max\{j: \coding{i,j} < \abslength{\sigma}\}+1$,
		\item for each $\coding{i,j} < \abslength{\sigma}$, $\tau_i(j) = \sigma(\coding{i,j})$.
	\end{itemize}
	
	Let $\sequence{T_{n,m}}{n,m\in\mathbb{N}}$ be a double sequence of trees s.t.\ for every $n$ there is $k_n$ s.t.\ $[T_{n,m}]=\emptyset$ iff $m<k_n$. For every $n$ we can define 
	\[ T_n := \coding{}\cup \bigcup_{m\in\mathbb{N}} \coding{m}\concat T_{n,m}. \]	
	Notice that, by hypothesis, for every $n$ we have $[T_n]\neq\emptyset$. Moreover, if $f\in [T_n]$ then $f(0)\in \sigmaCofChoice( \sequence{T_{n,m}}{m\in\mathbb{N}} )$.
	Define also
	\[ T:= \{\sigma \in \finStrings{\mathbb{N}}\st \sigma = \operatorname{dvt}(\tau_0,\hdots,\tau_k) \land (\forall i\le k)(\tau_i\in T_i) \}. \]
	Notice that if $f_n\in [T_n]$ then $\coding{f_n}_{n\in\mathbb{N}}\in [T]$, hence $[T]\neq\emptyset$. Moreover, if $f\in [T]$ and $i\le j$ then, letting $f[i]=\operatorname{dvt}(\tau_0,\hdots,\tau_k)$ and $f[j]=\operatorname{dvt}(\rho_0,\hdots,\rho_h)$, for every $n\le k$ we have $\tau_n\prefix \rho_n$. Therefore
	\[ f\in [T] \iff f= \coding{f_n}_{n\in\mathbb{N}} \text{ and } (\forall n\in \mathbb{N})(f_n \in [T_n]). \]
	Let $W:=\solovay{}(T)$ be the Solovay open set for $T$. By \thref{lem:solutions_solovay_open_set}, $W\in \dom(\wfindHclosed)$ and every $h\in \wfindHclosed(W)$ dominates a path through $T$.
	
	To conclude the proof we notice that, if $f=\coding{f_n}_{n\in\mathbb{N}}\in [T]$ and  $h$ dominates $f$ then, for every $n$,
	\[ h(\coding{n,0})\ge f_n(0) \in \sigmaCofChoice( \sequence{T_{n,m}}{m\in\mathbb{N}} ).\] In particular $h(\coding{n,0})\in\sigmaCofChoice(\sequence{T_{n,m}}{m\in\mathbb{N}} )$.
\end{proof}

Let $\ATR_2\mfunction{\LO\times \Cantor\times \mathbb{N}}{\{0,1\}\times\Baire}$ be the two sided version of $\ATR$, defined in \cite[Def.\ 8.2 and prop.\ 8.9]{gohatr} as the multivalued function whose inputs are triples $(L,A,\theta)$ and the output is a pair $(i,Y)$ s.t.\ either $i=0$ and $Y$ is a $<_L$-infinite descending chain or $i=1$ and $Y$ is a (pseudo)hierarchy $\coding{Y_a}_{a\in L}$ s.t.\ for all $b\in L$, $Y_b=\{n\st \theta(n,\bigoplus_{a<_L b} Y_a, A )\}$. It is known that $\UCBaire\strictlyweireducible \ATR_2 \strictlyweireducible \CBaire$ (\cite[Cor.\ 8.5 and 8.7]{gohatr}). Jun Le Goh (personal communication) observed the following corollary:

\begin{corollary}
	\thlabel{thm:wfindclosed_not_reducible_atr2}
	$\wfindHclosed\not\weireducible \ATR_2$.
\end{corollary}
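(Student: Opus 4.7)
The plan is to combine the reduction just established with a separation result for $\pSigmaCofChoice$. By the preceding theorem we have $\pSigmaCofChoice \strongweireducible \wfindHclosed$, so by the transitivity of $\weireducible$ it suffices to prove that $\pSigmaCofChoice \not\weireducible \ATR_2$. The corollary is then immediate.

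To separate $\pSigmaCofChoice$ from $\ATR_2$, I would exploit the structural difference between the two principles: an $\ATR_2$ instance is a single triple $(L,A,\theta)$ whose solutions consist either of a single descending chain in $L$ or of one pseudo-hierarchy along $L$, while an instance of $\pSigmaCofChoice$ simultaneously packages $\omega$-many independent $\boldfaceSigma^1_1$ choice problems. The idea is to build a computable input $\sequence{T_{n,m}}{n,m\in\mathbb{N}}$ for $\pSigmaCofChoice$ coding $\omega$-many pairwise ``independent'' questions which cannot all be answered uniformly from one backward access to an $\ATR_2$ oracle. Assuming $\pSigmaCofChoice\weireducible\ATR_2$ via forward/backward functionals $\Phi,\Psi$, the forward functional produces a single $\ATR_2$-instance $(L,A,\theta)$ from the double sequence of trees; then from any one $\ATR_2$-solution the backward functional would have to compute $\boldfaceSigma^1_1$-choices for each component simultaneously. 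One then arranges the components so that such a uniform extraction is impossible, either by invoking a parallelization-style bound on $\ATR_2$ from \cite{gohatr} or by a direct diagonalization against the (countably many) possible backward functionals, using that $\ATR_2$-solutions have a concrete arithmetic structure in each of their two alternatives.

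The main obstacle is precisely this second step: while the hard work on $\pSigmaCofChoice \strongweireducible \wfindHclosed$ has already been done in the previous theorem, obtaining $\pSigmaCofChoice \not\weireducible \ATR_2$ requires a careful analysis of $\ATR_2$'s solution space. The cleanest route is probably to cite an appropriate non-reducibility result from Goh's thesis or from \cite{KiharaADauriacChoice} (where $\pSigmaCofChoice$ appears as $\boldfaceSigma^1_1\text{-}\mathsf{AC}^{\vphantom{g}\mathsf{cof}}_{\Baire}$ and several such separations are discussed) rather than to reprove it from scratch, which is consistent with the fact that the corollary is credited to a personal communication from Jun Le Goh.
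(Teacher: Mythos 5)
There is a genuine gap at the crux of your argument. Your overall logical scheme is fine: to show $\wfindHclosed\not\weireducible\ATR_2$ it suffices to exhibit some $h$ with $h\weireducible\wfindHclosed$ and $h\not\weireducible\ATR_2$, and you take $h=\pSigmaCofChoice$ using the theorem proved immediately before. But the entire burden then falls on the separation $\pSigmaCofChoice\not\weireducible\ATR_2$, which you neither prove nor locate in the literature: you gesture at ``a parallelization-style bound on $\ATR_2$'' or ``a direct diagonalization'' without producing either, and the heuristic that one backward query to a single $\ATR_2$-instance cannot answer $\omega$-many independent $\boldfaceSigma^1_1$ questions is unreliable on its face (many parallelized problems, e.g.\ $\parallelization{\UCBaire}$ and $\parallelization{\CBaire}$, do reduce to single instances of the unparallelized problem). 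Since both $\pSigmaCofChoice$ and $\ATR_2$ sit strictly between $\UCBaire$ and $\CBaire$, and neither \cite{gohatr}, \cite{GohThesis} nor \cite{KiharaADauriacChoice} records a separation between them that I am aware of, your proposal reduces the corollary to an open-looking side problem rather than to a known fact.

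The actual argument is shorter and routes through a different pair of known results: by \thref{prop:wfindHclosed<=cbaire} one has $\CBaire\weireducible\CCantor\compproduct\wfindHclosed$, so if $\wfindHclosed\weireducible\ATR_2$ held then $\CBaire\weireducible\CCantor\compproduct\ATR_2$, contradicting Goh's theorem that $\CBaire\not\weireducible\CCantor\compproduct\ATR_2$ (\cite[Cor.\ 8.5]{gohatr}). In other words, the ``witness'' separating $\wfindHclosed$ from $\ATR_2$ is not $\pSigmaCofChoice$ but $\CBaire$ itself, accessed through one extra application of $\CCantor$ on each side; monotonicity of the compositional product then does all the work. If you want to salvage your route, you would first have to establish $\pSigmaCofChoice\not\weireducible\ATR_2$, which is a harder (and, as far as the cited sources go, unresolved) question than the corollary you are trying to prove.
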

\begin{proof}
	The claim follows from the fact that $\CBaire\weireducible\CCantor\compproduct\wfindHclosed$ (\thref{prop:wfindHclosed<=cbaire}) while $\CBaire
	\not\weireducible\CCantor\compproduct\ATR_2$ (\cite[Cor.\ 8.5]{gohatr}).
\end{proof}

Let us denote with $\TwfindHopen$ the total continuation of $\wfindHopen$, i.e.\ the (total) multivalued function with domain $\boldfaceSigma^0_1(\Ramsey)$ defined as
\[ \TwfindHopen(P):= \begin{cases}
	\HomSol(P)	& \text{if } P\in \dom(\wfindHopen); \\
	\Ramsey		& \text{otherwise}.
\end{cases} \]
The following proposition underlines the gap between $\wfindHopen$ and $\wfindHclosed$ ($\wfindHopen\strictlyweireducible \wfindHclosed$ by \thref{thm:open<ucbaire} and \thref{prop:ucbaire<wfindHclosed}).
\begin{proposition}
	\thlabel{thm:twfindopen<=atr2}
	$\TwfindHopen \weireducible \ATR_2$, and hence $\wfindHclosed\not\weireducible\TwfindHopen$.
\end{proposition}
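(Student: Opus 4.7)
The plan is to adapt the four-step Avigad-style construction used in the proof of \thref{thm:open<ucbaire}, replacing its appeal to $\ATR$ with a single call to $\ATR_2$. The key point is that $\ATR_2$ does not require us to know in advance whether the Kleene--Brouwer order built from the input is well-founded, and this is precisely what permits the extension from $\wfindHopen$ to its totalization.

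Concretely, given a name $\coding{P}$ for an arbitrary $P \in \boldfaceSigma^0_1(\Ramsey)$, I would first compute the tree $T := \closedsidesoltree{\coding{P}}$ (using \thref{lem:computable_operations}.\ref{itm:comp_tree}) and its Kleene--Brouwer order $\KB(T) \in \LO$. Let $\theta$ be the same arithmetic formula used in the proof of \thref{thm:open<ucbaire} to define, by transfinite recursion along $\KB(T)$, the sets $V_\sigma, V_\sigma^0, V_\sigma^1, U_\sigma$ and the good/bad classification of each $\sigma \in T$. Feeding $(\KB(T), \coding{P}, \theta)$ to $\ATR_2$ returns either a pair $(1, Y)$ with $Y$ a (pseudo)hierarchy along $\KB(T)$, or a pair $(0, Y)$ with $Y$ an infinite $<_{\KB(T)}$-descending chain.

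The backward functional is defined as follows: on input $(1, Y)$ it runs the backward functional from the proof of \thref{thm:open<ucbaire} on $Y$, defaulting to a fixed computable element of $\Ramsey$ if that routine encounters an inconsistency; on input $(0, Y)$ it simply outputs a fixed computable element of $\Ramsey$. To verify correctness I split on whether $P \in \dom(\wfindHopen)$. If $P \in \dom(\wfindHopen)$, then $\HomSol(P)\subset P$, so by \thref{lem:solution_closed_side_iff_path_through_tree} we have $[T] = \emptyset$; hence $\KB(T)$ is a well-order, $\ATR_2$ must return a pair of the form $(1, Y)$ with $Y$ a genuine hierarchy, and the backward functional of \thref{thm:open<ucbaire} recovers an element of $\HomSol(P) = \wfindHopen(P)$. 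If $P \notin \dom(\wfindHopen)$, then $\TwfindHopen(P) = \Ramsey$, so any element of Baire space is a valid output and the defaults are harmless.

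The main subtlety, and really the only nontrivial point, is this fallback: when $\ATR_2$ receives a triple whose linear order is not well-founded, the Avigad-style backward construction could get stuck or produce a string that is not a genuine homogeneous solution, but this only happens when $P \notin \dom(\wfindHopen)$ so totality of the continuation absorbs it. The final sentence $\wfindHclosed \not\weireducible \TwfindHopen$ follows immediately by transitivity: composing a hypothetical reduction $\wfindHclosed \weireducible \TwfindHopen$ with $\TwfindHopen \weireducible \ATR_2$ would give $\wfindHclosed \weireducible \ATR_2$, contradicting \thref{thm:wfindclosed_not_reducible_atr2}.
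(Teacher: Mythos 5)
Your overall architecture matches the paper's: build $\closedsidesoltree{\coding{P}}$ and $\KB(\closedsidesoltree{\coding{P}})$, feed them to $\ATR_2$, split on whether the answer is a descending chain or a (pseudo)hierarchy, and use totality of $\TwfindHopen$ to absorb the cases where $P\notin\dom(\wfindHopen)$. But there is a genuine gap exactly at the point you call ``the only nontrivial point''. You keep the unmodified formula $\theta$ and propose that the backward functional ``default to a fixed computable element if the routine encounters an inconsistency''. When $\KB(\closedsidesoltree{\coding{P}})$ is ill-founded and $\ATR_2$ returns a pseudohierarchy $(1,Y)$, the failure of the Avigad-style construction need not surface as a detectable inconsistency: it surfaces as an unbounded search that never halts (for instance, some $U_\sigma$ read off from $Y$ can be finite, so the search for the next value of $f$ diverges). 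A realizer must output a total element of $\Ramsey$ on every valid input; totality of $\TwfindHopen$ makes every output acceptable, but it does not excuse producing no output, and in type-2 computation one cannot retroactively switch to a default once output bits have been committed. So your backward functional is partial precisely in the pseudohierarchy case, and the reduction as written fails.

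The paper closes this gap by changing the formula rather than the backward functional: $\theta$ is replaced by a formula $\theta'$ which, at each $\sigma$, checks whether $V_\sigma$ is finite or $V_\sigma\setdifference U_\tau$ is infinite for some $\tau <_{\KB(\closedsidesoltree{\coding{P}})}\sigma$ (conditions that can never hold when the order is a well-order), and in that case resets $U_\sigma:=\mathbb{N}$ and labels $\sigma$ bad. This makes the recursion self-correcting: even along a pseudohierarchy all the $U_\sigma$ stay infinite and finite intersections stay infinite, so the backward construction always terminates and outputs some element of $\Ramsey$ (possibly meaningless, which is harmless since in that case $P\notin\dom(\wfindHopen)$, together with the case in which $\coding{}$ is labeled bad). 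When $\KB(\closedsidesoltree{\coding{P}})$ is a well-order the modification is vacuous and the classical argument yields $f\in\HomSol(P)$, as in your correct analysis of that case. Your derivation of $\wfindHclosed\not\weireducible\TwfindHopen$ from \thref{thm:wfindclosed_not_reducible_atr2} is fine.
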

\begin{proof}
	Let $P\in\boldfaceSigma^0_1(\Ramsey)$ be an input for $\TwfindHopen$ and consider the tree $\closedsidesoltree{\coding{P}}$. We can computably build the linear order $\KB(\closedsidesoltree{\coding{P}})$. Let $\theta$ be the arithmetic formula defined in the proof of \thref{thm:open<ucbaire}. Notice that $\KB(\closedsidesoltree{\coding{P}})$ is not necessarily a well-order, as we are not assuming $P\in\dom(\wfindHopen)$ (i.e.\ there may be solutions that avoid $P$). We therefore need to modify $\theta$ by requiring that for every $\sigma \in \closedsidesoltree{\coding{P}}$, if either $V_\sigma$ is finite or $V_\sigma \setdifference U_\tau$ is infinite for some $\tau <_{\KB(\closedsidesoltree{\coding{P}})} \sigma$ (this cannot happen if $\KB(\closedsidesoltree{\coding{P}})$ is a well-order), then we set $U_\sigma = \mathbb{N}$ and we label $\sigma$ as bad. Let $\theta'$ be the modified formula. 

	Let $(i,Y)\in \ATR_2(\KB(\closedsidesoltree{\coding{P}}), P, \theta')$. If $i=0$ then $Y$ is a $<_{\KB(\closedsidesoltree{\coding{P}})}$-infinite descending sequence and $P\notin\dom(\wfindHopen)$; therefore any $f\in\Ramsey$ is a valid output for $\TwfindHopen(P)$. Suppose now that $i=1$, so that $Y$ is a (pseudo)hierarchy. By construction, $Y$ yields a labeling of each $\sigma\in\incstring$ as ``good'' or ``bad'', and, for each $\sigma \in \closedsidesoltree{\coding{P}}$, an infinite set $U_\sigma$ (see the proof of \cite[Lem.\ V.9.4]{Simpson09}). The classical proof shows that if $P\in\dom(\wfindHopen)$ then $\coding{}$ is good. In particular if $\coding{}$ is bad then we can immediately conclude that $P\notin\dom(\wfindHopen)$ (and, again, any $f\in\Ramsey$ is a valid solution for the original problem). On the other hand, 
	if $\coding{}$ is good then the modifications made to obtain $\theta'$ ensure that the intersection of finitely many $U_\sigma$ is infinite, and therefore we can compute $f\in \Ramsey$ following the construction of the classical proof. Notice that if $P\in\dom(\wfindHopen)$ then $f\in \HomSol(P)=\TwfindHopen(P)$. On the other hand, if $P\notin\dom(\wfindHopen)$ then $f\in\TwfindHopen(P)$ (trivially).

	The second part follows from \thref{thm:wfindclosed_not_reducible_atr2}.
\end{proof}

\subsection{Problems not reducible to \texorpdfstring{$\CBaire$}{\mathsf{C}}}
\label{subsec:not_cbaire}

Let us turn our attention to the last two remaining problems, namely $\openRamsey$ and $\findHopen$.

\begin{proposition}
	\thlabel{prop:tcbaire<=ccantor_comp_openRamsey}
	$\TCBaire\weireducible \CCantor\compproduct \openRamsey$ and $\chiPi\strictlyweireducible\LPO\compproduct \openRamsey$.
\end{proposition}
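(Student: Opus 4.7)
The two claims will be established separately, both relying on the Solovay construction of \thref{def:solovay_open}. Given the input $T \in \repTree$, I let $\hat T \in \repincTree$ be its image under the canonical computable bijection $\finBaire \leftrightarrow \incstring$ and form $W := \solovay{}(\hat T)$, taking $\phi$ to be the identity; by \thref{lem:computable_operations}.\ref{itm:solovay_closed}, $W$ is computable from $T$. Applying $\openRamsey$ to $W$ yields some $f \in \HomSol(W)$, and by \thref{lem:solutions_solovay_open_set} the surjectivity of the identity forces $W = \Ramsey$ whenever $[\hat T] = \emptyset$, so in that case $f \in W$ is automatic; conversely, when $[\hat T] \neq \emptyset$, every homogeneous solution avoids $W$ and dominates some path through $\hat T$.

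For $\chiPi \weireducible \LPO \compproduct \openRamsey$, the middle step simply observes that $f \in W$ is a $\Sigma^0_1$ predicate about $f$, which can be settled with a single call to $\LPO$; the answer is positive exactly when $[\hat T] = \emptyset$, i.e.\ precisely when $\chiPi(T) = 1$, so $\LPO$'s output is $\chiPi(T)$.

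For $\TCBaire \weireducible \CCantor \compproduct \openRamsey$, I follow the template of \thref{prop:wfindHclosed<=cbaire}: the middle computable step builds the finitely-branching tree $\hat T_f := \hat T \cap \{\sigma \in \incstring : \sigma \dominated f\}$ and uses $\CCantor \weiequiv \codedChoice{}{}{[X_f]}$ for $X_f := \{\sigma \in \incstring : \sigma \dominated f\}$ (\cite[Thm.\ 7.23]{BGP17}) to pick an element. When $[T] \neq \emptyset$, the set $[\hat T_f] \subseteq [\hat T]$ is non-empty and $\CCantor$ returns a path through $\hat T$, decodable to a path through $T$. The genuinely new ingredient, absent in the $\CBaire$-case of \thref{prop:wfindHclosed<=cbaire}, is the treatment of well-founded inputs: when $[T] = \emptyset$, the set $[\hat T_f]$ is empty, so the middle step augments the closed set fed to $\CCantor$ with an auxiliary always-present point, and the backward functional, with $T$ available via the cylinder, either decodes $\CCantor$'s output into a bona fide path through $T$ or, in the remaining case, returns a default sequence, which is a legitimate $\TCBaire(T)$-answer once one observes that this branch can be taken only when $[T]$ is empty.

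For the strictness $\chiPi \strictlyweireducible \LPO \compproduct \openRamsey$, the trivial reduction $\openRamsey \weireducible \LPO \compproduct \openRamsey$ reduces the task to establishing $\openRamsey \not\weireducible \chiPi$. By the Solovay construction recalled in the proof of \thref{prop:ucbaire<wfindHclosed}, there is a computable open set all of whose homogeneous solutions are non-hyperarithmetic; a putative Weihrauch reduction of $\openRamsey$ to $\chiPi$ would, on this computable input, produce its output by composing a computable map, a single $\{0,1\}$-valued query to $\chiPi$, and another computable map, hence would produce a hyperarithmetic solution, in contradiction. The main obstacle, in my view, is the careful handling of the empty case in the first reduction, where the multi-valuedness of $\CCantor$ has to be reconciled with the requirement that its input always be non-empty, using the totalization of $\TCBaire$ to absorb the problematic outcome.
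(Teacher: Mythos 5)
Your second reduction and the strictness argument are fine: for $f\in\HomSol(W)$ the question ``$f\in W$?'' is a single $\Sigma^0_1$ question whose answer is yes exactly when $[\hat T]=\emptyset$, so one call to $\LPO$ recovers $\chiPi(T)$; this is a mild (arguably cleaner) variant of the paper's question ``is the finitely branching tree $\hat T_f$ finite?'', and the non-reducibility of $\openRamsey$ to $\chiPi$ via the Solovay set is exactly the paper's justification. The gap is in the first reduction, at the very point you single out as the new ingredient. Feeding $\CCantor$ a set of the form $[\hat T_f]\cup\{z\}$ for an \emph{always-present} auxiliary point $z$ does not work: $\CCantor$ is multivalued, so a realizer may return $z$ even when $[T]\neq\emptyset$, and in that case your ``default sequence'' is not a path through $T$ and hence not a valid element of $\TCBaire([T])$. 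The assertion that ``this branch can be taken only when $[T]$ is empty'' is precisely what fails for a literally always-present point, and the backward functional cannot repair it: recognizing whether the returned $h$ is a genuine path of $\hat T_f$ is a $\Pi^0_1$ property, and extracting a path from the ill-founded finitely branching tree $\hat T_f$ without the help of the choice operator is not computable (it is $\CCantor$-complete).

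The correct handling --- and what the paper's proof of \thref{prop:tcbaire<=ccantor_comp_openRamsey} does --- is to make the auxiliary branch present \emph{only} when $[\hat T_f]=\emptyset$. This is computably possible because $\hat T_f$ sits inside the finitely branching tree $X_f$ of strings dominated by $f$, so ``does $\hat T_f$ contain a node of length $n$'' is decidable uniformly in $n$: one searches for the first level at which $\hat T_f$ dies out and only then grafts on an infinite branch. This shows $\TChoice{[X_f]}\weiequiv\codedChoice{}{}{[X_f]}\weiequiv\CCantor$ and guarantees that when $[T]\neq\emptyset$ the tree has nodes at every level (K\"onig's lemma), nothing is grafted, and every output of the choice operator is a genuine path of $\hat T_f$, hence decodes to a path through $T$. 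With this replacement your argument goes through and coincides with the paper's.
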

\begin{proof}
	We first show $\TCBaire\weireducible\CCantor\compproduct\openRamsey$. Let $T\subset\Ramsey$ be a tree and let $W:=\solovay{}(T)$ be the Solovay open set of $T$. Let also $f\in \openRamsey(W)$. By \thref{lem:solutions_solovay_open_set}, if $[T]\neq \emptyset$ then $\HomSol(W)\cap W=\emptyset$ and $f$ is a bound for some $x\in [T]$. On the other hand, if $[T]=\emptyset$ then $W=\Ramsey$ and $f$ is just an arbitrary infinite string.
	
	Let $X$ be the subtree of $\finBaire$ of the strings that are dominated by $f$. Notice that $\TChoice{[X]}\weiequiv\Choice{[X]}$. Indeed, to show that $\TChoice{[X]}\weireducible \Choice{[X]}$ we can notice that, given a tree $S\subset X$, we can computably define an ill-founded tree $R$ as follows: for each level $n$ we check whether $S$ has no nodes at level $n$. 
	If this happens for some $n$, we can (computably) extend $S$ to an ill-founded tree $R$. If this never happens then $R=S$. It is straightforward to see that $\Choice{[X]}([R])\subset \TChoice{[X]}([S])$. 

	Let $T_f:= T \cap X$. By \cite[Thm.\ 7.23]{BGP17}, $\CCantor\weiequiv\Choice{[X]}\weiequiv\TChoice{[X]}$, therefore we can use $\CCantor$ to compute a solution $h\in\TChoice{[X]}([T_f])$. Notice that $h\in \TCBaire([T])$. Indeed, if $[T]\neq \emptyset$ then $[T_f]\neq \emptyset$ and $h$ is a path through $T$.

	A simple modification of the above argument shows that $\chiPi\weireducible\LPO\compproduct\openRamsey$. In fact, we can see the tree $T$ as an input for $\chiPi$. If $f\in\openRamsey(\solovay{}(T))$ then $T_f$ is a finitely branching tree. Thus whether $T_f$ is finite is a $\Sigma^0_1$ question in $T_f$. We can therefore use $\LPO$ to check if $T_f$ is infinite and hence establish whether it is well-founded or not (by K\"onig's lemma, a finitely-branching tree is infinite iff it has a path). 
	
	The reduction is trivially strict as $\chiPi$ always has a computable output.
\end{proof}

It follows from \thref{thm:openRT_not_<=_sTCbaire_x_lim^n} that the reduction $\TCBaire\weireducible\CCantor\compproduct\openRamsey$ is actually strict.

\begin{corollary}
\thlabel{cor:openRamsey_not<_cbaire}
$\openRamsey \not\weireducible \CBaire$.
\end{corollary}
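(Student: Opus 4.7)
The plan is to derive a contradiction from $\openRamsey \weireducible \CBaire$ by chaining this hypothetical reduction with the already established $\TCBaire \weireducible \CCantor \compproduct \openRamsey$ from \thref{prop:tcbaire<=ccantor_comp_openRamsey}, and then invoking the known strict separation $\CBaire \strictlyweireducible \TCBaire$ recorded in Section \ref{subsec:wei}.

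The key ingredients, in the order I would apply them, are as follows. First, assume for contradiction that $\openRamsey \weireducible \CBaire$. Since $\CCantor \weireducible \CBaire$, monotonicity of the compositional product gives $\CCantor \compproduct \openRamsey \weireducible \CBaire \compproduct \CBaire$. Second, recall from Section \ref{subsec:wei} that $\CBaire$ is closed under compositional product (\cite[Thm.\ 7.3]{BdBPLow12}), so $\CBaire \compproduct \CBaire \weiequiv \CBaire$. Combining these two facts yields $\CCantor \compproduct \openRamsey \weireducible \CBaire$. Third, by \thref{prop:tcbaire<=ccantor_comp_openRamsey}, $\TCBaire \weireducible \CCantor \compproduct \openRamsey$, and transitivity then gives $\TCBaire \weireducible \CBaire$. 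This contradicts the known strict separation $\CBaire \strictlyweireducible \TCBaire$ (\cite[Prop.\ 8.2(1)]{KMP20}), completing the proof.

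There is essentially no obstacle here: all of the work lies in \thref{prop:tcbaire<=ccantor_comp_openRamsey}, and the corollary is just a matter of combining this with already recorded facts about $\CBaire$ and $\TCBaire$. The only mild subtlety is the use of monotonicity of the compositional product on the left factor, which follows immediately from the definition of $*$ as a maximum over Weihrauch-reducible pairs.
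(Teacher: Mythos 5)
Your proof is correct and follows essentially the same approach as the paper: assume $\openRamsey\weireducible\CBaire$, push it through \thref{prop:tcbaire<=ccantor_comp_openRamsey} and the closure of $\CBaire$ under compositional product, and contradict a known separation. The only (immaterial) difference is that the paper uses the second half of that proposition, deriving $\chiPi\weireducible\LPO\compproduct\openRamsey\weireducible\CBaire$ and contradicting $\chiPi\not\weireducible\CBaire$, whereas you use the first half to get $\TCBaire\weireducible\CBaire$ and contradict $\CBaire\strictlyweireducible\TCBaire$; both routes are valid.
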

\begin{proof}
If $\openRamsey \weireducible \CBaire $ then
\[ \chiPi \weireducible \LPO\compproduct \openRamsey \weireducible \CBaire \compproduct \CBaire \weiequiv \CBaire, \]
contradicting the fact that $\chiPi \not\weireducible \CBaire$ (see \cite[Sec.\ 7]{KMP20}).
\end{proof}

\begin{corollary}
\thlabel{cor:wfindHclosed<openRT}
$\wfindHclosed\strictlyweireducible \openRamsey$.
\end{corollary}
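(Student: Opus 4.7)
The plan is to prove the two halves of the strict reducibility separately, with both halves essentially following from results already established in the excerpt.

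For the reduction $\wfindHclosed \weireducible \openRamsey$, I would observe that $\wfindHclosed$ is literally a restriction of $\openRamsey$: its domain consists of open sets $P$ for which $\HomSol(P) \cap P = \emptyset$, and on such inputs $\wfindHclosed(P) = \HomSol(P) \setdifference P = \HomSol(P) = \openRamsey(P)$. So the forward functional is the identity (composed with the inclusion $\boldfaceSigma^0_1(\Ramsey) \hookrightarrow \boldfaceSigma^0_1(\Ramsey)$) and the backward functional is a projection: any $f \in \openRamsey(P)$ is automatically in $\HomSol(P) \setdifference P$ when $P \in \dom(\wfindHclosed)$, so it is a valid output for $\wfindHclosed(P)$.

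For strictness, I would argue by contradiction. Suppose $\openRamsey \weireducible \wfindHclosed$. By \thref{prop:wfindHclosed<=cbaire} we have $\wfindHclosed \weireducible \CBaire$, and by transitivity this would give $\openRamsey \weireducible \CBaire$, directly contradicting \thref{cor:openRamsey_not<_cbaire}. Hence the reduction must be strict.

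There is no significant obstacle here: the content of the corollary is really a packaging of the previously established inequalities $\wfindHclosed \weireducible \CBaire$ and $\openRamsey \not\weireducible \CBaire$ together with the trivial observation that restricting the domain of a problem makes it reducible to the original.
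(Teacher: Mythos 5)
Your proposal is correct and follows exactly the paper's argument: the reduction $\wfindHclosed\weireducible\openRamsey$ holds because $\wfindHclosed$ is a restriction of $\openRamsey$ to a smaller domain, and strictness follows from $\wfindHclosed\weireducible\CBaire$ (\thref{prop:wfindHclosed<=cbaire}) together with $\openRamsey\not\weireducible\CBaire$ (\thref{cor:openRamsey_not<_cbaire}).
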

\begin{proof}
The fact that $\wfindHclosed\weireducible\openRamsey$ is trivial since $\wfindHclosed$ is a restriction of $\openRamsey$ to a smaller
domain. The reduction is strict because
$\openRamsey\not\weireducible\CBaire$ (\thref{cor:openRamsey_not<_cbaire})
but $\wfindHclosed\weireducible\CBaire$ (\thref{prop:wfindHclosed<=cbaire}).
\end{proof}

\begin{definition}
	For every represented space $X$, we define the \textdef{strong total continuation of} $\mathsf{C}_X$ to be the multivalued function $\mathsf{sTC}_X\mfunction{\boldfacePi^0_1(X)}{2\times X}$ defined as
	\[ \mathsf{sTC}_X(A):=\{ (b,x) \in 2\times X \st (b=0 \rightarrow A=\emptyset) \land (b=1 \rightarrow x\in A) \}. \]
	In particular, for $X=\Baire$ (and analogously for $X=\Cantor$) we can think of $\sTCBaire$ as the total multivalued function that, given in input a tree, returns a string $\coding{b}\concat x$ s.t.\ $b$ codes whether the tree is well-founded or not and, if it is ill-founded, then $x$ is a path through $T$. 
\end{definition}

\label{thm:tcbaire<stcbaire} It is clear that $\TCBaire \strictlyweireducible \sTCBaire$ (the fact that the reduction is strict follows from $\chiPi\not\weireducible\TCBaire$ \cite[Cor.\ 8.6]{KMP20}, while obviously $\chiPi\weireducible\sTCBaire$). We can also notice the following:

\begin{corollary}
	\thlabel{thm:sTCBaire<=sTCCantor*openRT}
	$\sTCBaire \weireducible \sTCCantor \compproduct \openRamsey$.
\end{corollary}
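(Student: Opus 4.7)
The plan is to adapt the argument for $\TCBaire \weireducible \CCantor \compproduct \openRamsey$ given in \thref{prop:tcbaire<=ccantor_comp_openRamsey}, replacing $\CCantor$ by $\sTCCantor$ so that the emptiness bit of the input tree is transferred alongside a potential path. As in that proof, we may assume the input to $\sTCBaire$ is presented as a tree $T\in\repincTree$ (using the computable isometry $\Baire\cong\Ramsey$).

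Given such $T$, I would first computably form the Solovay open set $W:=\solovay{}(T)$ and feed it to $\openRamsey$, obtaining a homogeneous solution $f$. By \thref{lem:solutions_solovay_open_set}, if $[T]\neq\emptyset$ then $f\in \HomSol(W)\setdifference W$ and $f$ dominates some path through $T$, while if $[T]=\emptyset$ then $W=\Ramsey$ and $f$ is arbitrary. Setting $X:=\{\sigma\in\finBaire \st (\forall i<\abslength{\sigma})(\sigma(i)\le f(i))\}$ and $T_f:=T\cap X$, we obtain $[T_f]\neq\emptyset$ if and only if $[T]\neq\emptyset$, and every path through $T_f$ is a path through $T$.

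Using $f$, I will then computably translate $T_f$ into a tree $\tilde T\subset \finStrings{2}$ whose paths correspond uniformly computably to paths through $T_f$; this is routine because the branching of $T_f$ at level $n$ is bounded by $f(n)+1$, so $f$ gives a uniform computable encoding of $\prod_n\{0,\dots,f(n)\}$ into $\Cantor$. Applying $\sTCCantor$ to $\tilde T$ returns a pair $(b,y)$: if $b=0$ then $[\tilde T]=\emptyset$, hence $[T]=\emptyset$, and any pair $(0,g)$ is a valid output for $\sTCBaire(T)$; if $b=1$ then $y$ decodes to some $x\in [T_f]\subset [T]$, so $(1,x)$ is a valid output. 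Since building $W$ from $T$, building $\tilde T$ from $T$ and $f$, and decoding the $\sTCCantor$-output are all computable, this yields the desired Weihrauch reduction via the compositional product (using the cylindrical decomposition to insert the middle computable step between $\openRamsey$ and $\sTCCantor$).

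The main obstacle, modest as it is, lies in the encoding step: the translation $T_f\mapsto\tilde T$ and the associated decoding of paths must be computable uniformly in $f$ and $T$, since different bounds $f$ produce different target trees on $\finStrings{2}$. Once this uniformity is secured, the argument is essentially a direct transcription of the $\TCBaire$ reduction, with the key conceptual gain being that $\sTCCantor$, unlike $\CCantor$, preserves enough information to also decide whether $[T]$ is empty, which is exactly what distinguishes $\sTCBaire$ from $\TCBaire$.
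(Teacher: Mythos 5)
Your proposal is correct and follows essentially the same route as the paper, which simply repeats the proof of \thref{prop:tcbaire<=ccantor_comp_openRamsey} with $\sTCCantor$ in place of $\CCantor$: form the Solovay open set, use the homogeneous solution $f$ as a bound, and apply the Cantor-space choice operator to the pruned finitely-branching tree $T_f$. Your explicit binary encoding of $T_f$ is just one concrete way of realizing the equivalence $\sTCCantor\weiequiv \mathsf{sTC}_{[X]}$ that the paper uses implicitly (via \cite[Thm.\ 7.23]{BGP17}), and you correctly observe that the totality of $\sTCCantor$ lets you skip the $\TChoice{[X]}\weiequiv\Choice{[X]}$ workaround needed in the original proof.
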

\begin{proof}
	It suffices to repeat the proof of the first statement of \thref{prop:tcbaire<=ccantor_comp_openRamsey}, using $\sTCCantor$ in place of $\CCantor$.
\end{proof}
We will prove in \thref{thm:tcbaire_arit_<_openRT} that the above reduction is actually strict.

\begin{proposition}
	$\parallelization{ \TCBaire } \weiincomparable \sTCBaire$.
\end{proposition}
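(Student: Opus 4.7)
The plan is to establish the two non-reductions $\sTCBaire \not\weireducible \parallelization{\TCBaire}$ and $\parallelization{\TCBaire} \not\weireducible \sTCBaire$ separately.

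For the first non-reduction, the reduction $\chiPi \weireducible \sTCBaire$ is immediate, since the first coordinate of an $\sTCBaire$-output of a tree $T$ already encodes whether $T$ is ill-founded, which is precisely the $\chiPi$-datum. It therefore suffices to prove $\chiPi \not\weireducible \parallelization{\TCBaire}$. The main obstruction I would exploit is that outputs of $\TCBaire$-instances lie in $\Baire$ without any decision bit, and on every well-founded component \emph{any} infinite string is a valid output. Assuming a reduction via computable forward and backward functionals $\Phi$, $\Psi$, the continuity of $\Psi$ implies that the $\{0,1\}$-answer depends only on a finite prefix of the input tree and a finite portion of the sequence of $\TCBaire$-outputs. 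The plan is then to exhibit two trees $T, T'$ with $\chiPi(T) \neq \chiPi(T')$ that agree on the portion $\Phi$ and $\Psi$ inspect, and to produce valid $\parallelization{\TCBaire}$-output sequences for $\Phi(T)$ and $\Phi(T')$ that coincide on the finitely many coordinates read by $\Psi$; the arbitrariness of $\TCBaire$-outputs on well-founded components makes this possible. Since $\Psi$ would then be forced to return the same answer in both cases, we reach a contradiction.

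For the second non-reduction, the approach exploits the fact that $\sTCBaire$ is a single-instance problem whose output is one element of $2 \times \Baire$, packaging a $\chiPi$-style decision together with a $\TCBaire$-style witness on the \emph{same} underlying input. A hypothetical reduction $\parallelization{\TCBaire} \weireducible \sTCBaire$ would therefore encode an entire countable parallel family of $\TCBaire$-problems into a single $\sTCBaire$-call on some tree $S$ computed from the input sequence. The plan is to contradict this by a diagonalization: construct a computable sequence $\sequence{T_n}{n}$ of Solovay-type ill-founded trees (as in \thref{def:solovay_open} and \thref{lem:solutions_solovay_open_set}) designed so that for every computable construction $\sequence{T_n}{n} \mapsto S$, neither the $\chiPi$-bit nor any single path $y$ in $S$ can serve as a uniform oracle from which a selection $\sequence{x_n}{n}$ with $x_n \in [T_n]$ is computable.

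The main obstacle in both directions is the sheer freedom of the $\TCBaire$-outputs. In the first direction, this freedom is what one harnesses to spoof $\Psi$; in the second, this same freedom must be tamed through a careful arrangement of mutually ``independent'' Solovay trees so that no single $(b,y)$ produced by $\sTCBaire$ can simultaneously encode infinitely many uniform path-solutions.
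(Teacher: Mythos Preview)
Your first non-reduction is on the right track in that it reduces to $\chiPi\not\weireducible\parallelization{\TCBaire}$, but the continuity sketch you give does not close. You want $T$ and $T'$ with opposite $\chiPi$-values and valid $\parallelization{\TCBaire}$-outputs for $\Phi(T)$ and $\Phi(T')$ that agree on the finitely many coordinates $\Psi$ reads. The problem is that you have no control over which of the trees $\Phi(T)_0,\hdots,\Phi(T)_N$ are ill-founded: on an ill-founded component the output must be an actual path, so the ``arbitrariness on well-founded components'' gives you nothing there. And when you pass from $T$ to $T'$, the trees $\Phi(T')_0,\hdots,\Phi(T')_N$ may change completely; there is no reason a finite prefix of a path through $\Phi(T)_i$ extends to a path through $\Phi(T')_i$. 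The paper sidesteps this entirely by citing \cite[Cor.\ 8.6]{KMP20} for $\chiPi\not\weireducible\parallelization{\TCBaire}$, whose proof is not a naive continuity argument.

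The second non-reduction is where your plan has a genuine hole. ``Construct Solovay-type trees and diagonalize so that no single $(b,y)$ encodes infinitely many paths'' is not a proof strategy; you have not identified what invariant of $\sTCBaire$-outputs the diagonalization would defeat, nor how to carry it out uniformly against all computable reductions. The paper's argument is short and structural: if $\parallelization{\TCBaire}\weireducible\sTCBaire$ then in particular $\TCBaire\times\CBaire\weireducible\sTCBaire$, hence $\TCBaire\times\NHA\weireducible\sTCBaire$. By \thref{prop:product_with_NHA_general} this forces the forward functional to land in the set $A$ of inputs for $\sTCBaire$ with no hyperarithmetic solution. But $\sTCBaire(\emptyset)$ has computable solutions, so $A$ consists of nonempty closed sets, on which $\sTCBaire$ is essentially $\CBaire$. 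This yields $\TCBaire\weireducible\CBaire$, contradicting \cite[Prop.\ 8.2(1)]{KMP20}. You should replace the diagonalization idea with this $\NHA$-restriction technique.
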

\begin{proof}
	The fact that $\sTCBaire\not\weireducible \parallelization{ \TCBaire }$ follows from the obvious observation that $\chiPi\weireducible\sTCBaire$, while $\chiPi\not\weireducible\parallelization{ \TCBaire }$ (see \cite[Cor.\ 8.6]{KMP20}).

	On the other hand, if $\parallelization{ \TCBaire } \weireducible \sTCBaire$ then, in particular, $\TCBaire\times \CBaire\weireducible \sTCBaire$. Since $\NHA\weireducible\CBaire$ (see e.g.\ \cite[Cor.\ 3.6]{KMP20}), by \thref{prop:product_with_NHA_general}, this implies that $\TCBaire\weireducible\sTCBaire\restrict{A}$, where $A$ is the set of non-empty closed sets of $\Baire$ with no hyperarithmetic member (notice that $\sTCBaire(\emptyset)$ has computable solutions). In particular, this implies that $\TCBaire\weireducible\CBaire$, contradicting \cite[Prop. 8.2.1]{KMP20}.
\end{proof}

We will now show that $\openRamsey\not\weireducible\TCBaire$. We actually prove a stronger result that will be useful in Section~\ref{sec:artihemtic_reducibility}.

\begin{theorem}
	\thlabel{thm:openRT_not_<=_sTCbaire_x_lim^n}
	For every $n\in\mathbb{N}$, $\openRamsey\not\weireducible \sTCBaire \times \mflim^{(n)}$.
\end{theorem}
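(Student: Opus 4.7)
The plan is a proof by contradiction. Assume that $\openRamsey \weireducible \sTCBaire \times \mflim^{(n)}$ is witnessed by computable functionals $\Phi, \Psi$. For any computable $P\in\boldfaceSigma^0_1(\Ramsey)$, the pair $\Phi(P)=(S,q)$ is computable, so $\mflim^{(n)}(q)$ is computable in the $n$-th jump of the empty oracle, while any realizer output of $\sTCBaire$ on $S$ has the form $(0,x)$ for an arbitrary $x\in\Baire$ (when $[S]=\emptyset$, in which case the realizer may pick $x$ computable) or $(1,x)$ for some $x\in[S]$. In either case the candidate $\Psi((b,x),\mflim^{(n)}(q),P)$ for a homogeneous solution is uniformly computable from $x$ together with $\mflim^{(n)}(q)$.

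The key idea is to follow the template of the proof of $\parallelization{\TCBaire}\weiincomparable\sTCBaire$ given just above the statement and combine the hypothetical reduction with $\NHA\weireducible\CBaire$ via \thref{prop:product_with_NHA_general}. The strategy is to first establish $\openRamsey\times\NHA\weireducible\sTCBaire\times\mflim^{(n)}$: given an input pair $(P,p)$, one constructs a single tree $S^{\ast}$ combining the tree $S$ coming from $\Phi(P)$ with a tree $R_p$ computable in $p$ whose paths are all non-hyperarithmetic in $p$ (such $R_p$ exist by a relativised version of the standard construction of computable trees with no hyperarithmetic paths), so that a single application of $\sTCBaire(S^{\ast})$ yields both a path usable by the original reduction and a non-hyperarithmetic-in-$p$ string suitable as an $\NHA$-solution. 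Applying \thref{prop:product_with_NHA_general} with $A$ the set of $\sTCBaire\times\mflim^{(n)}$-instances whose $\sTCBaire$-input has no hyperarithmetic-in-input path then yields $\openRamsey\weireducible(\sTCBaire\times\mflim^{(n)})\restrict{A}$. On $A$ the $\sTCBaire$-factor reduces to $\CBaire$ (the well-founded branch is vacuous because a $(0,x)$ output with $x$ computable would be hyperarithmetic in the input), and combining this with $\mflim^{(n)}\weireducible\CBaire$ and the closure of $\CBaire$ under parallel product, one obtains $\openRamsey\weireducible\CBaire$, contradicting \thref{cor:openRamsey_not<_cbaire}.

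The principal obstacle is the reduction $\openRamsey\times\NHA\weireducible\sTCBaire\times\mflim^{(n)}$, since $\sTCBaire$ is not closed under parallel product and so an independent $\sTCBaire$-call dedicated to $\NHA$ cannot simply be adjoined to the $\sTCBaire$-call used by the original reduction. The difficulty concentrates in the case $[S]=\emptyset$: a product-tree construction $S\times R_p$ is then also empty, so $\sTCBaire$ outputs $(0,\cdot)$ with an arbitrary $x$-component and the $\NHA$-output is lost. The crux of the proof is to encode the $\NHA$-task directly into a cleverly modified $\sTCBaire$-input so that a non-hyperarithmetic-in-$p$ string is extracted regardless of the realizer's choice in the degenerate case, possibly exploiting $\mflim^{(n)}$ to detect and handle the situation $[S]=\emptyset$ and fall back on the $\mathbf{0}^{(n)}$-computable solution guaranteed by the original reduction in that case.
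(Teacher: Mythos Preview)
Your proposal has a genuine gap at exactly the point you flag as the ``principal obstacle'': you never establish $\openRamsey\times\NHA\weireducible\sTCBaire\times\mflim^{(n)}$, and the workarounds you sketch cannot close it. In the case $[S]=\emptyset$ you suggest ``exploiting $\mflim^{(n)}$ to detect and handle the situation $[S]=\emptyset$'', but whether a tree on $\mathbb{N}$ is well-founded is $\Pi^1_1$-complete, far beyond any finite iterate of $\mflim$. Your alternative ``fall back on the $\mathbf{0}^{(n)}$-computable solution guaranteed by the original reduction'' only rescues the $\openRamsey$ output; the $\NHA(p)$ output is still missing, and since $\sTCBaire$ returns $(0,x)$ with $x$ arbitrary (hence possibly computable), nothing non-hyperarithmetic in $p$ is available. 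Without the product reduction, \thref{prop:product_with_NHA_general} cannot be invoked, and the argument collapses.

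The paper avoids this obstacle by a different mechanism. Rather than bolting $\NHA$ onto the reduction, it restricts $\openRamsey$ to the subclass $X$ of computably open sets with \emph{no arithmetic homogeneous solution}. For any $P\in X$ with computable name, if $\Phi_1(\coding{P})$ named the empty set then $\sTCBaire$ could legally output $0^\omega$, and together with the arithmetic $\mflim^{(n)}$-output this would yield an arithmetic homogeneous solution --- contradicting $P\in X$. Hence the $\sTCBaire$-input is always nonempty on $X$, and one gets $\openRamsey_X\weireducible\CBaire$ directly. The remaining work is to show $\openRamsey_X\not\weireducible\CBaire$, which the paper does by proving $\cchiPi\weireducible\LPO\compproduct\openRamsey_X$ via an explicit construction combining a Solovay open set for the given tree with a clopen set coming from the reduction $\UCBaire\weireducible\wfindHclopen$, carefully arranged so that the resulting open set lies in $X$. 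This restriction-to-$X$ idea is the key move your proposal is missing.
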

\begin{proof}
	Let $(X,\repmap{X})$ be the represented space of computably open subsets of $\Ramsey$ with no arithmetic homogeneous solution, where $\repmap{X}$ is the restriction of $\repmap{\boldfaceSigma^0_1(\Ramsey)}$ to computable names. Let us define $\openRamsey_X\mfunction{X}{\Ramsey}$ as $\openRamsey_X(P):=\HomSol(P)$. 
	
	The reduction $\openRamsey_X\weireducible\openRamsey$ holds trivially, hence it is enough to prove that  
	\[ \openRamsey_X \not\weireducible \sTCBaire \times \mflim^{(n)}.\]
	Assume by contradiction that there is a reduction. Since $\mflim^{(n)}$ is a cylinder, we can assume that the reduction is a strong Weihrauch reduction. Let $\Phi_1, \Phi_2,\Psi$ be the maps witnessing the strong reduction, with $\Phi_1$ producing an input for $\sTCBaire$ and $\Phi_2$ producing an input for $\mflim^{(n)}$. Assume that there is an $P\in X$ s.t.\ $\Phi_1(\coding{P})$ is a name for the empty set, for some name $\coding{P}$ of $P$. By definition, $0^\omega$ is a valid output of $\sTCBaire(\emptyset)$. Let $q:= \mflim^{(n)}(\Phi_2(\coding{P}))$. Notice that $q$ is arithmetic, as $\coding{P}$ is computable by definition of $X$. We have now reached a contradiction as $\Psi(0^\omega, q)$ is arithmetic, against the fact that $P$ has no arithmetic solution.
	
	This implies that, for every $P\in X$ and every name $\coding{P}$ of $P$, $\Phi_1(\coding{P})$ is a name for a non-empty closed set, hence we have a reduction $\openRamsey_{X}\weireducible\CBaire \times \mflim^{(n)}\weiequiv \CBaire$.

	We now claim that $\openRamsey_X\not\weireducible\CBaire$, concluding the proof. We will in fact show that $\cchiPi\weireducible\LPO\compproduct\openRamsey_X$, where $\cchiPi$ is the restriction of $\chiPi$ to computable trees. The claim then follows from the fact that $\cchiPi\not\weireducible\CBaire$ (as $\cchiPi$ is not effectively Borel measurable, see \cite[Thm.\ 7.7]{BdBPLow12}) and the fact that $\CBaire$ is closed under compositional product.

	Let $\Phi_D$ be the forward functional witnessing $\codedUChoice{\boldfaceSigma^1_1}{}{\Baire}\weireducible\wfindHclopen$ (recall that $\codedUChoice{\boldfaceSigma^1_1}{}{\Baire}\weiequiv\UCBaire$ \cite[Thm.\ 3.11]{KMP20}, while $\UCBaire\weireducible\wfindHclopen$ has been proved in \thref{thm:ucbaire<clopen}). Let also $T_{NAR}$ be a computable input for $\codedUChoice{\boldfaceSigma^1_1}{}{\Baire}$ with a single non-arithmetic solution (recall that, by \cite[Thm.\ II.4.2]{SacksHRT} every $H$-set is a $\Pi^0_2$ singleton).
	
	Let $T$ be an input for $\cchiPi$. We can assume w.l.o.g.\ that $T$ has no hyperarithmetic path: indeed if $S$ is a computable ill-founded tree with no hyperarithmetic path then 
	\[ T\times S:=\{ \coding{ \coding{\sigma(0),\tau(0)},\hdots, \coding{\sigma(n-1),\tau(n-1)}} \st \sigma \in T \text{ and } \tau\in S \}\]
	is ill-founded iff $T$ is, and $T\times S$ has no hyperarithmetic path. 
	
	Let $W:=\solovay{}(T)$ be the Solovay open set for $T$ and let $Q$ be the clopen set with name $\Phi_D(T_{NAR})$. Notice that, since $Q\in\dom(\wfindHclopen)$, for every $f$ we can computably find a subsequence $g\substring f$ s.t.\ $g\in Q$.
	
	We can computably define $P:=W\cap Q$ (see \cite[Prop.\ 3.2.4]{Brattka05}). Since $W$ and $Q$ are computable then so is $P$. Let us show that $P$ does not have any arithmetic solution, which implies $P\in X$. We distinguish two cases:
	\begin{enumerate}
		\item $[T]=\emptyset$:  by \thref{lem:solutions_solovay_open_set} we have that $W=\Ramsey$, hence $P=Q$ and $\HomSol(P)=\HomSol(Q)$. Since every solution for $Q$ computes the non-arithmetic solution for $T_{NAR}$, $P$ does not have arithmetic solutions.
		\item $[T]\neq\emptyset$: notice first of all that $P\in\dom(\wfindHclosed)$ as $P\subset W$ and $W\in\dom(\wfindHclosed)$ (see \thref{lem:solutions_solovay_open_set}). 	
		
		Given $f\in\HomSol(P)$ then, by the above observation, we can computably find a subsolution $g\in\HomSol(P)$ s.t.\ $g\in Q$, thus $g\notin W$. By K\"onig's lemma such a $g$ is a bound for a path through $T$ (see the proof of \thref{lem:solutions_solovay_open_set}). This also implies that every $f\in\HomSol(P)$ is not (hyper)arithmetic (as, by hypothesis, $T$ does not have hyperarithmetic paths).
	\end{enumerate}
	Given $f\in \openRamsey_{X}(P)$ we can computably find $g\substring f$ s.t.\ $g\in Q$. Let $T_g$ be the subtree of $T$ bounded by $g$. Notice that $g$ is a bound for a path through $T$ iff $T_g$ is ill-founded iff $T$ is ill-founded (as shown in case $2$ above). Since $T_g$ is a finitely-branching tree, by K\"onig's lemma $T_g$ is ill-founded iff it is infinite. Moreover, the problem of checking whether $T_g$ is finite is a $\Sigma^{0,g}_1$ question, hence we can use $\LPO$ to solve the problem (as in the proof of \thref{prop:tcbaire<=ccantor_comp_openRamsey}).
\end{proof}

\begin{proposition}
	\thlabel{prop:product_with_NHA}
	For every (partial) multivalued function $f$, if $f\times\NHA \weireducible\openRamsey$ then $f\weireducible \CBaire$.
\end{proposition}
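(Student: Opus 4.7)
The plan is to invoke \thref{prop:product_with_NHA_general} with $g := \openRamsey$, choosing as restriction set
\[ A := \dom(\findHclosed) = \{P \in \boldfaceSigma^0_1(\Ramsey) \st \HomSol(P) \setdifference P \neq \emptyset\}. \]
If this choice satisfies the hypothesis of \thref{prop:product_with_NHA_general}, then we immediately obtain $f \weireducible \openRamsey\restrict{A}$, and the conclusion $f \weireducible \CBaire$ will follow because every solution to $\findHclosed(P)$ (which is nonempty when $P \in A$) is also a valid output for $\openRamsey(P) = \HomSol(P)$, giving $\openRamsey\restrict{A} \weireducible \findHclosed \weiequiv \CBaire$ by \thref{thm:findclopen=findclosed=cbaire}.

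The key step is verifying that
\[ \{P \in \boldfaceSigma^0_1(\Ramsey) \st (\forall h \in \openRamsey(P))(h \text{ is not hyperarithmetic in } P)\} \subseteq A. \]
I would argue by contrapositive: suppose $P \notin A$, i.e.\ $\HomSol(P) \setdifference P = \emptyset$. Then $\HomSol(P) \subseteq P$, which precisely says that $P \in \dom(\wfindHopen)$, and on this domain $\wfindHopen(P) = \HomSol(P) = \openRamsey(P)$. By \thref{cor:wfindclopen=ucbaire} we have $\wfindHopen \weireducible \UCBaire$, so \thref{thm:ucbaire_hyperarithmetic_solution} yields some $h \in \wfindHopen(P) = \openRamsey(P)$ which is hyperarithmetic in (any name of) $P$. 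Hence $P$ is not in the set on the left-hand side, establishing the required inclusion.

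With this inclusion verified, \thref{prop:product_with_NHA_general} directly gives $f \weireducible \openRamsey\restrict{A}$, and composing with the trivial reduction $\openRamsey\restrict{A} \weireducible \findHclosed$ and the equivalence $\findHclosed \weiequiv \CBaire$ finishes the proof. I do not anticipate any serious obstacle here: the argument is essentially a bookkeeping exercise that leverages the dichotomy between $P \in \dom(\wfindHopen)$ (where $\UCBaire$-style reductions force hyperarithmetic solutions) and $P \in \dom(\findHclosed)$ (where $\CBaire$ suffices to pick a solution avoiding $P$). The only subtlety worth double-checking is that the same name of $P$ used by the outer reduction is the one relative to which hyperarithmeticity is measured, which is automatic since \thref{thm:ucbaire_hyperarithmetic_solution} is stated with respect to the input name.
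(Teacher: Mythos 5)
Your proposal is correct and follows essentially the same route as the paper's proof: both apply \thref{prop:product_with_NHA_general} with $A=\dom(\findHclosed)$, verify the hypothesis via the dichotomy with $\dom(\wfindHopen)$ together with $\wfindHopen\weiequiv\UCBaire$ and \thref{thm:ucbaire_hyperarithmetic_solution}, and conclude via $\openRamsey\restrict{A}\weireducible\findHclosed\weiequiv\CBaire$.
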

\begin{proof}
	Assume $f\times\NHA\weireducible \openRamsey$ and let
	$B:=\dom(\wfindHopen)$. Obviously $\openRamsey\restrict{B} = \wfindHopen$,
	hence, since $\wfindHopen\weiequiv\UCBaire$
	(\thref{cor:wfindclopen=ucbaire}), the restriction of $\openRamsey$ to $B$
	always has a solution that is hyperarithmetic relative to the input
	(\thref{thm:ucbaire_hyperarithmetic_solution}). Since $f\times
	\NHA\weireducible\openRamsey$, by \thref{prop:product_with_NHA_general}, we
	have that $f$ is reducible to the restriction of $\openRamsey$ to
	\[ A:= \setcomplement[\boldfaceSigma^0_1(\Ramsey)]{B} = \dom(\findHclosed).  \]
	This implies that $f\weireducible \findHclosed$, as for each $P\in A$ we have $\findHclosed(P)\subset \openRamsey(P)$ (and therefore every realizer for $\findHclosed$ is also a realizer for $\openRamsey\restrict{A}$). The claim follows from the fact that $\findHclosed\weiequiv\CBaire$ (\thref{thm:findclopen=findclosed=cbaire}).
\end{proof}

\begin{corollary}
	\thlabel{cor:openRamsey_<_cbaire_x_openRamsey}
	$\openRamsey\strictlyweireducible \NHA \times \openRamsey \weireducible \CBaire \times \openRamsey.$
\end{corollary}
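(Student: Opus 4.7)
The plan is to assemble this corollary from pieces that are already available. The second reduction $\NHA \times \openRamsey \weireducible \CBaire \times \openRamsey$ is immediate: since $\NHA\weireducible\CBaire$ (cited in the excerpt), and the parallel product is monotone with respect to Weihrauch reducibility in each coordinate, we get $\NHA\times\openRamsey \weireducible \CBaire\times\openRamsey$ with no further work.

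For the first reduction, note that $\openRamsey \weireducible \NHA\times\openRamsey$ holds trivially by taking the forward functional to duplicate the instance and the backward functional to project onto the $\openRamsey$-component. So the only content is strictness, i.e.\ $\NHA\times\openRamsey \not\weireducible \openRamsey$. Here I would argue by contradiction: if $\NHA\times\openRamsey \weireducible \openRamsey$, then in particular $\openRamsey\times\NHA \weireducible\openRamsey$ (up to trivial rearrangement), so \thref{prop:product_with_NHA} applies with $f:=\openRamsey$ and yields $\openRamsey\weireducible\CBaire$. This contradicts \thref{cor:openRamsey_not<_cbaire}.

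The key observation making this work is that \thref{prop:product_with_NHA} is exactly designed for arguments of this shape: combining a target function with $\NHA$ forces any putative reduction to $\openRamsey$ to land in the "closed side" part of the domain $A=\dom(\findHclosed)$, where $\openRamsey$ coincides with $\findHclosed\weiequiv\CBaire$. So there is no real obstacle; the proof is essentially a two-line citation assembly.

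Concretely I would write: the reduction $\openRamsey\weireducible\NHA\times\openRamsey$ is trivial and $\NHA\times\openRamsey\weireducible\CBaire\times\openRamsey$ follows from $\NHA\weireducible\CBaire$. For strictness, if we had $\NHA\times\openRamsey\weiequiv\openRamsey$, then \thref{prop:product_with_NHA} applied to $f=\openRamsey$ would give $\openRamsey\weireducible\CBaire$, contradicting \thref{cor:openRamsey_not<_cbaire}.
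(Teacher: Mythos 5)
Your proposal is correct and follows exactly the paper's own argument: the second reduction from $\NHA\weireducible\CBaire$, the first reduction trivially, and strictness by applying \thref{prop:product_with_NHA} with $f=\openRamsey$ to derive $\openRamsey\weireducible\CBaire$, contradicting \thref{cor:openRamsey_not<_cbaire}.
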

\begin{proof}
	The first reduction is straightforward and the second one follows from the fact that $\NHA\weireducible\CBaire$ (see \cite[Cor.\ 3.6]{KMP20}). The fact that the first reduction is strict follows from \thref{prop:product_with_NHA} and the fact that $\openRamsey\not\weireducible\CBaire$ (\thref{cor:openRamsey_not<_cbaire}).
\end{proof}

To have a better understanding of the uniform strength of $\openRamsey$, we now show that, even with a parallel access to some hyperarithmetic computational power, $\openRamsey$ does not reach the level of $\TCBaire\times \CBaire$. Thus $\openRamsey$ is not at the level of $\TCBaire^*$, which is one of the strongest principles considered in \cite{KMP20} to be still at the level of $\mathrm{ATR}_0$.

\begin{proposition}
	\thlabel{thm:tcbaire_x_cbaire_not<_f_times_openRT}
	If $f\pmfunction{X}{Y}$ always has an hyperarithmetic solution relative to the input and $f\weireducible \CBaire$ then 
	$\TCBaire\times \CBaire \not \weireducible f \times \openRamsey$.
\end{proposition}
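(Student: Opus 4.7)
My plan is to argue by contradiction using \thref{prop:product_with_NHA_general} together with the fact (from \thref{cor:wfindclopen=ucbaire} and \thref{thm:ucbaire_hyperarithmetic_solution}) that every $P\in\dom(\wfindHopen)$ admits a homogeneous solution hyperarithmetic in $P$. Suppose $\TCBaire\times\CBaire\weireducible f\times\openRamsey$. Since $\NHA\weireducible\CBaire$ (see \cite[Cor.\ 3.6]{KMP20}) and $\CBaire$ is closed under parallel and compositional product, I get $(\TCBaire\times\CBaire)\times\NHA\weireducible\TCBaire\times(\CBaire\times\NHA)\weireducible\TCBaire\times\CBaire$, so the hypothetical reduction yields also $(\TCBaire\times\CBaire)\times\NHA\weireducible f\times\openRamsey$.

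Next I identify the set $A\subset \dom(f)\times\dom(\openRamsey)$ required by \thref{prop:product_with_NHA_general}: an input $(x,P)$ produces only non-hyperarithmetic (in $(x,P)$) pairs iff no output of $\openRamsey(P)$ is hyperarithmetic in $(x,P)$ (using the assumption that $f(x)$ always contains an output hyperarithmetic in $x$, hence in $(x,P)$). By the observation above, whenever $P\in\dom(\wfindHopen)$ the set $\openRamsey(P)\supset\wfindHopen(P)$ contains an element hyperarithmetic in $P$, hence in $(x,P)$. Therefore any such ``bad'' input must satisfy $P\notin\dom(\wfindHopen)$, i.e.\ $P\in\dom(\findHclosed)$, so I may take $A:=\dom(f)\times\dom(\findHclosed)$. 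Applying \thref{prop:product_with_NHA_general} gives
\[ \TCBaire\times\CBaire\weireducible (f\times\openRamsey)\restrict{A} = f\times\bigl(\openRamsey\restrict{\dom(\findHclosed)}\bigr). \]

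Finally, on $\dom(\findHclosed)$ one has $\findHclosed(P)\subset\openRamsey(P)$, so any realizer of $\findHclosed$ realizes $\openRamsey\restrict{\dom(\findHclosed)}$, giving $\openRamsey\restrict{\dom(\findHclosed)}\weireducible\findHclosed$. Combining with the hypothesis $f\weireducible\CBaire$ and \thref{thm:findclopen=findclosed=cbaire} I obtain
\[ \TCBaire\times\CBaire\weireducible f\times\findHclosed\weireducible\CBaire\times\CBaire\weiequiv\CBaire, \]
so $\TCBaire\weireducible\CBaire$, contradicting \cite[Prop.\ 8.2(1)]{KMP20}. The only delicate point is the identification of the set $A$: one must be careful that the hypothesis on $f$ really lets us transfer ``hyperarithmetic in $x$'' to ``hyperarithmetic in $(x,P)$'' and combine it with the $\UCBaire$-solutions of the $\wfindHopen$ part of $\openRamsey$'s domain; everything else is bookkeeping via closure properties of $\CBaire$.
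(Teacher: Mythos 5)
Your proposal is correct and follows essentially the same route as the paper's proof: use $\NHA\weireducible\CBaire$ to invoke \thref{prop:product_with_NHA_general}, observe via $\wfindHopen\weiequiv\UCBaire$ and \thref{thm:ucbaire_hyperarithmetic_solution} that the ``bad'' inputs must lie in $X\times\dom(\findHclosed)$, and then conclude with $f\times\findHclosed\weireducible\CBaire\times\CBaire\weiequiv\CBaire$. The only cosmetic difference is that you carry $\TCBaire\times\CBaire$ through the NHA-elimination step, whereas the paper drops to $\TCBaire$ directly; both yield the same contradiction with $\TCBaire\not\weireducible\CBaire$.
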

\begin{proof}
	Notice that, if we define $B:=\dom(\wfindHopen)$, then
	\[ (f \times\openRamsey)\restrict{X \times B} = f \times\wfindHopen. \]
	Since $\wfindHopen\weiequiv\UCBaire$ (\thref{cor:wfindclopen=ucbaire}) we have that $(f \times\openRamsey)\restrict{X \times B}$ always has a solution that is hyperarithmetic relative to the input (\thref{thm:ucbaire_hyperarithmetic_solution}). Assume by contradiction that the reduction $\TCBaire\times \CBaire \weireducible  f \times \openRamsey$ holds. By \thref{prop:product_with_NHA_general} we have that $\TCBaire$ is reducible to the restriction of $f \times \openRamsey$ to
	\[ A:= X \times (\setcomplement[\boldfaceSigma^0_1(\Ramsey)]{B}) = X \times\dom(\findHclosed). \]
	In particular this implies that $\TCBaire\weireducible f\times \findHclosed$ (see also the proof of \thref{prop:product_with_NHA}). We have therefore reached a contradiction as we would have
	\begin{equation*}
		\TCBaire\weireducible f \times\findHclosed \weireducible \CBaire\times \CBaire \weiequiv  \CBaire.	\qedhere
	\end{equation*}
\end{proof}
In particular, \thref{thm:tcbaire_x_cbaire_not<_f_times_openRT} implies $\TCBaire\times \CBaire \not \weireducible \UCBaire \times \openRamsey$. 
\medskip 

Let us now turn our attention to $\findHopen$. We first notice the following useful property:
\begin{proposition}
	\thlabel{prop:findHopen_closed_product}
	$\findHopen \times \findHopen \strongweireducible \findHopen$.
\end{proposition}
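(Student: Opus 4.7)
The natural approach is to leverage the pairing operation $\opencodepairing$ developed in \thref{lem:opencodepairing_properties} together with its computability, established in \thref{lem:computable_operations}(\ref{itm:opencode_pairing}). The idea is to combine two instances $P_1,P_2\in\dom(\findHopen)$ into a single open set $P$ whose homogeneous solutions landing in it encode pairs of such solutions for $P_1$ and $P_2$; a single query to $\findHopen(P)$ then yields both outputs, and the decoding is a fixed computable function of the output alone, giving a \emph{strong} Weihrauch reduction.

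Concretely, given a pair $(P_1,P_2)\in\dom(\findHopen)\times\dom(\findHopen)$ with names $\coding{P_1},\coding{P_2}$, I would first computably preprocess $\coding{P_1}$ so that every enumerated string has length at least $2$, matching the hypothesis of \thref{lem:opencodepairing_properties}. This is done by replacing each $\sigma\in\coding{P_1}$ with $\abslength{\sigma}<2$ by the (computably enumerable) set of all strict length-$2$ extensions of $\sigma$ in $\incstring$: since every $f\in\Ramsey$ extending $\sigma$ satisfies the strictly increasing condition, the basic cone $\{f\in\Ramsey\st \sigma\pprefix f\}$ is exactly the union of the basic cones given by these length-$2$ extensions, so the modified name still represents $P_1$. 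Then I compute the open set $P:=P_1\opencodepairing P_2$ using \thref{lem:computable_operations}(\ref{itm:opencode_pairing}).

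To see that $P\in\dom(\findHopen)$, pick any $f_i\in\HomSol(P_i)\cap P_i$, which exists since $P_i\in\dom(\findHopen)$. By \thref{lem:opencodepairing_properties}, the element $f_1\opencodepairing f_2$ lies in $\HomSol(P)\cap P$, so $P$ is a legitimate input for $\findHopen$. Any $f\in\findHopen(P)$ must, again by \thref{lem:opencodepairing_properties}, be of the form $f=g_1\opencodepairing g_2$ for some $g_i\in\HomSol(P_i)\cap P_i$. Thus the backward map $f\mapsto(\pi_1 f,\pi_2 f)$ (which exists and is computable by the same lemma) produces a valid output for $(\findHopen\times\findHopen)(P_1,P_2)$ without needing to inspect the original instance, witnessing a strong Weihrauch reduction.

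No substantive obstacle arises: the whole purpose of the operator $\opencodepairing$ and the companion \thref{lem:opencodepairing_properties} is exactly to enable this type of parallel decomposition, and the only bookkeeping needed, namely enforcing the length-$\geq 2$ hypothesis on $\coding{P_1}$, is a purely syntactic, computable rewriting of the name.
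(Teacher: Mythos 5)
Your proposal is correct and follows essentially the same route as the paper's proof: preprocess $\coding{P_1}$ so that all enumerated strings have length at least $2$, form $P_1\opencodepairing P_2$ via \thref{lem:computable_operations}, and apply \thref{lem:opencodepairing_properties} to see that the computable projections $\pi_1,\pi_2$ of any solution of $\findHopen(P_1\opencodepairing P_2)$ give solutions for $P_1$ and $P_2$, yielding a strong reduction since the backward map depends only on the output.
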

\begin{proof}
	Let $\coding{P_1},\coding{P_2}$ be names for two open sets $P_1, P_2\in \dom(\findHopen)$. Assume w.l.o.g.\ that every string $\sigma \in \coding{P_1}$ has length at least $2$ (there is no loss of generality as we can computably modify the code of $P_1$ by replacing a string with length $1$ with all its extensions of length $2$).
	
	Let $P$ be the open set with name $\coding{P_1}\opencodepairing \coding{P_2}$. Recall that $P$ is computable from $P_1$ and $P_2$ (see \thref{lem:computable_operations}). Moreover, by \thref{lem:opencodepairing_properties}
	\[ \HomSol(P)\cap P = \{ f\opencodepairing g\st f\in \HomSol(P_1)\cap P_1 \text{ and } g\in \HomSol(P_2)\cap P_2\}.  \]
	Since the projections $\pi_i$ are computable, it is clear that, from every solution of $\findHopen(P)$, we obtain two homogeneous solutions that land in $P_1$ and $P_2$ respectively.
\end{proof}

\begin{corollary}
	\thlabel{thm:findHopen_cylinder}
	$\findHopen$ is a cylinder.	
\end{corollary}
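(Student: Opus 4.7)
The plan is to establish $\findHopen \strongweiequiv \id{}\times \findHopen$. The inequality $\findHopen \strongweireducible \id{}\times\findHopen$ is immediate: pair each input with a fixed computable element of $\Baire$, feed the pair to $\id{}\times\findHopen$, and project the output onto its second coordinate. For the other direction I plan to combine two ingredients: a strong reduction $\id{}\strongweireducible\findHopen$, together with the fact that $\findHopen\times\findHopen \strongweireducible \findHopen$, which is \thref{prop:findHopen_closed_product}. Since strong Weihrauch reducibility is monotone under $\times$, these give
\[ \id{}\times\findHopen \strongweireducible \findHopen\times\findHopen \strongweireducible \findHopen. \]

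To witness $\id{}\strongweireducible\findHopen$, given $p\in\Baire$ I first convert it into a strictly increasing element $p^{*}\in\Ramsey$ via a computable bijection with a computable inverse (e.g.\ the partial-sum map $p^{*}(n):=\sum_{i\le n}(p(i)+1)$). I then form the tree $T_p:=\{p^{*}[n]\st n\in\mathbb{N}\}\in\repincTree$, which is computable uniformly in $p$ and has $[T_p]=\{p^{*}\}$. By \thref{lem:computable_operations}.\ref{itm:path_description} the clopen set $D_p:=\describepath{}(T_p)$ is computable uniformly in $p$, and by \thref{lem:clopen_describes_paths} it lies in $\dom(\findHclopen)$, with every $f\in\HomSol(D_p)\cap D_p$ uniformly computing the unique path $p^{*}$, from which $p$ is then recovered by inverting the preprocessing. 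Composing with the computable inclusion $\boldfaceDelta^0_1(\Ramsey)\hookrightarrow\boldfaceSigma^0_1(\Ramsey)$ of \thref{lem:computable_operations_open}.\ref{itm:includsion_clopen_open} reinterprets $D_p$ as an input for $\findHopen$ with the same set of homogeneous solutions, delivering the required strong reduction.

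The main difficulty is purely bookkeeping: $\describepath{}$ requires its input to lie in $\repincTree$, which is the reason for the preprocessing step $p\mapsto p^{*}$. Once that is in place, every remaining step assembles directly from the tools developed in Section \ref{sec:ramsey} and the preceding proposition, so no substantial new argument is needed.
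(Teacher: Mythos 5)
Your proposal is correct and follows essentially the same route as the paper: the reduction $\id{}\times\findHopen\strongweireducible\findHopen$ via \thref{prop:findHopen_closed_product}, with $\id{}\strongweireducible\findHopen$ witnessed by the tree of prefixes of a strictly increasing recoding of $p$ and the clopen set $\describepath{}(T_p)$, invoking \thref{lem:clopen_describes_paths}. The paper simply phrases your explicit partial-sum preprocessing as ``assume w.l.o.g.\ that $p\in\Ramsey$''; otherwise the arguments coincide.
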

\begin{proof}
	This follows from \thref{prop:findHopen_closed_product} and the fact that $\idBaire \strongweireducible \findHopen$, as it then follows that
	\[ \idBaire \times \findHopen \strongweireducible \findHopen \times \findHopen \strongweiequiv \findHopen. \]
	To prove that $\idBaire\strongweireducible\findHopen$ we proceed as follows: let $p\in\Baire$ and assume w.l.o.g.\ that $p\in\Ramsey$. 
	Consider the tree $T:=\{ p[k]\st k\in\mathbb{N} \}$ of prefixes of $p$ and let $D:=\describepath{}(T)$. By \thref{lem:clopen_describes_paths} we have that $D\in\dom(\findHopen)$ and that every $f\in \HomSol(D)\cap D$ uniformly computes $p$.
\end{proof}

The problem $\findHopen$ is much stronger than all of the other Ramsey-related problems we introduced. We will in fact show that $\openRamsey\strictlyweireducible \findHopen$ (and this holds even if we consider arithmetic reductions, see \thref{thm:openRamsey_arith<_findHopen}). 

Although we will prove much stronger results it is worth it to sketch a short proof for the reduction $\openRamsey\weireducible\findHopen$. Given a name $\coding{P}$ for an open set $P$ build the open set 
\[ Q:=\elementpower{2}(\coding{P})\cup \describepath{\psi_3}(\closedsidesoltree{\coding{P}}),\]
where $\psi_3:=\sigma \mapsto 3^{\coding{\sigma}+1}$ and $\coding{\sigma}$ is the code of $\sigma$. Using \thref{lem:elementpower_preserves_solutions} and \thref{lem:clopen_describes_paths} one can prove that $Q\in\dom(\findHopen)$ and that every $f\in\HomSol(Q)\cap Q$ computes a solution for $P$. 

\begin{proposition}
\thlabel{prop:tcbaire<=findHopen}
$\sTCBaire \weireducible \findHopen$ and hence $\chiPi\strictlyweireducible\findHopen$.
\end{proposition}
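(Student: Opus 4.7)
The plan is to merge a Solovay open set with a path-describing clopen set into a single open instance of $\findHopen$, using disjoint ``color palettes'' so that \thref{prop:sets_with_disjoint_solutions} cleanly decomposes its homogeneous solutions. Concretely, given a tree $T$ (input for $\sTCBaire$), I first transform it via the canonical computable bijection $\Baire\to\Ramsey$ into a tree $T'\in\repincTree$ with $[T']\neq\emptyset\iff[T]\neq\emptyset$ and a uniform correspondence between paths of $T'$ and paths of $T$. I choose two injections $\phi_e,\phi_o\colon\mathbb{N}\to\mathbb{N}$ with disjoint ranges, say $\phi_e(n):=2n$ and $\phi_o(n):=2n+1$, both with computable range and computable partial inverse, and set $W:=\solovay{\phi_e}(T')$ and $D:=\describepath{\phi_o}(T')$. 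Both are $T'$-computable by \thref{lem:computable_operations}, so $P:=W\cup D$ is computable from $T$ by \thref{lem:computable_operations_open}.

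Every $f\in W$ has $f(0),f(1)$ in the range of $\phi_e$ and every $g\in D$ has $g(0),g(1)$ in the range of $\phi_o$, so \thref{prop:sets_with_disjoint_solutions} gives
\[ \HomSol(P)\cap P=(\HomSol(W)\cap W)\cup(\HomSol(D)\cap D). \]
This immediately shows $P\in\dom(\findHopen)$: if $[T']=\emptyset$ then $\HomSol(W)\cap W\neq\emptyset$ by \thref{lem:solutions_solovay_open_set}(1), while if $[T']\neq\emptyset$ then $\HomSol(D)\cap D\neq\emptyset$ by \thref{lem:clopen_describes_paths}. The backward functional takes any $f\in\findHopen(P)$ and distinguishes the two cases by the parity of $f(0)$. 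If $f(0)$ is even then $f\in W$; since \thref{lem:solutions_solovay_open_set}(2) forbids $\HomSol(W)\cap W\neq\emptyset$ when $[T']\neq\emptyset$, we must have $[T']=\emptyset$, so we output the bit $0$. If $f(0)$ is odd then $f\in D$, \thref{lem:clopen_describes_paths} uniformly computes a path through $T'$ (hence through $T$), and we output the bit $1$ together with this path.

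I expect the main subtlety to be checking the hypotheses of \thref{prop:sets_with_disjoint_solutions} at the first two coordinates of elements of $W$ and $D$, which is not automatic but is precisely what the definitions of $\solovay{\phi_e}$ and $\describepath{\phi_o}$ enforce; this is what lets parity of $f(0)$ read off the bit of $\sTCBaire$. For the strictness statement $\chiPi\strictlyweireducible\findHopen$, the reduction $\chiPi\weireducible\sTCBaire\weireducible\findHopen$ is immediate. Conversely, a reduction $\findHopen\weireducible\chiPi$ would mean, for every computable $P\in\dom(\findHopen)$, that one of the two strings obtained by applying the backward functional to $(P,0)$ and $(P,1)$ is a computable solution of $\findHopen(P)$. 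But applying the sketch $\openRamsey\weireducible\findHopen$ from just before the statement to Solovay's computable open set from \cite{Solovay78} (as used in \thref{prop:ucbaire<wfindHclosed}) produces a computable $Q\in\dom(\findHopen)$ whose solutions uniformly compute solutions of $\openRamsey$ at that Solovay set, all of which are non-hyperarithmetic---a contradiction.
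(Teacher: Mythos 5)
Your reduction $\sTCBaire\weireducible\findHopen$ is essentially the paper's proof: both take the union of a Solovay open set and a path-describing clopen set built from (pullbacks of) $T$ with disjoint first-coordinate palettes, invoke \thref{prop:sets_with_disjoint_solutions} together with \thref{lem:solutions_solovay_open_set} and \thref{lem:clopen_describes_paths}, and read off the bit of $\sTCBaire$ from $f(0)$; the only difference is your even/odd injections versus the paper's powers of $2$ and $3$, which is immaterial. For the strictness of $\chiPi\weireducible\findHopen$ you take a slightly longer route than necessary: the paper simply notes that $\chiPi$ always has computable output while $\sTCBaire\not\weireducible\chiPi$, whereas you detour through the (only sketched) reduction $\openRamsey\weireducible\findHopen$ and Solovay's computable open set. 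Your argument is sound, but you could avoid relying on that sketch by applying your own construction to a computable ill-founded tree with no computable path, which already yields a computable element of $\dom(\findHopen)$ all of whose solutions are non-computable.
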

\begin{proof}
Let $\stringcodepower{n}:\finBaire\to \mathbb{N}:=\sigma\mapsto n^{\coding{\sigma}+1}$, where $\coding{\sigma}$ is the code of $\sigma$.

Let $T\subset\incstring$ be a tree. We can define the open set $P\subset\Ramsey$ as $P:=P_1 \cup P_2$, where $P_1 := \describepath{\stringcodepower{2}}(T)$ and $P_2 := \solovay{\stringcodepower{3}}(T)$. 
Notice that, by \thref{lem:clopen_describes_paths} and \thref{lem:solutions_solovay_open_set} we have 
\[ [T]=\emptyset \iff \HomSol(P_1)\cap P_1= \emptyset \iff  \HomSol(P_2)\cap P_2\neq\emptyset. \]
Moreover, by \thref{prop:sets_with_disjoint_solutions},
\[ \HomSol(P)\cap P=(\HomSol(P_1)\cap P_1) \cup (\HomSol(P_2)\cap P_2).  \]
This implies that
\begin{gather*}
	[T]\neq\emptyset \Rightarrow \HomSol(P)\cap P = \HomSol(P_1)\cap P_1, \\
	[T]=\emptyset \Rightarrow \HomSol(P)\cap P = \HomSol(P_2)\cap P_2.
\end{gather*}
In particular, given a $f\in \HomSol(P)\cap P$ we can know whether $f\in \HomSol(P_1)\cap P_1$ or $f\in \HomSol(P_2)\cap P_2$ just by checking $f(0)$. If $f(0)$ is a power of $2$ then $[T]\neq\emptyset$ and we can compute a path through $T$ by considering the string $x\in \Ramsey$ s.t.\
\[ x = \bigcup_{i\in\mathbb{N}} \stringcodepower{2}^{-1}(f(i)).  \]
In the other case $[T]=\emptyset$ hence we can just return $\coding{0}\concat f$.

The result about $\chiPi$ follows from $\chiPi\weireducible\sTCBaire$ and $\sTCBaire\not\weireducible\chiPi$ as $\chiPi$ always has computable output.
\end{proof}

\begin{corollary}
	\thlabel{thm:tcbaire^*<findHopen}
	$\TCBaire^* \strictlyweireducible \findHopen$.
\end{corollary}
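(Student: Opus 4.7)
The forward reduction $\TCBaire^*\weireducible\findHopen$ should follow purely from ingredients already set up in the excerpt. Proposition \ref{prop:tcbaire<=findHopen} gives $\TCBaire\weireducible\sTCBaire\weireducible\findHopen$, and Proposition \ref{prop:findHopen_closed_product} together with a trivial induction on the arity shows $\findHopen^{[k]}\strongweireducible\findHopen$ for every $k$, so that $\findHopen^*\weiequiv\findHopen$. Since finite parallelization is monotone with respect to $\weireducible$, applying it to $\TCBaire\weireducible\findHopen$ yields
\[ \TCBaire^* \weireducible \findHopen^* \weiequiv \findHopen. \]

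For the strict direction, I would exploit the reduction $\openRamsey\weireducible\findHopen$ sketched in the paragraph preceding Proposition \ref{prop:tcbaire<=findHopen}: given a name $\coding{P}$ of an open $P\subset\Ramsey$, the open set
\[ Q := \elementpower{2}(\coding{P}) \cup \describepath{\psi_3}(\closedsidesoltree{\coding{P}}), \qquad \psi_3(\sigma):=3^{\coding{\sigma}+1}, \]
lies in $\dom(\findHopen)$ (the ``open side'' and ``closed side'' are separated by the disjoint ranges of $\elementpower{2}$ and $\psi_3$, and one side is always nonempty by Lemmas \ref{lem:elementpower_preserves_solutions} and \ref{lem:clopen_describes_paths} together with Proposition \ref{prop:sets_with_disjoint_solutions}). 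Moreover any $f\in\HomSol(Q)\cap Q$ uniformly computes some element of $\HomSol(P)$: the first coordinate of $f(0)$ tells us whether the solution ``came from $P$'' or from the tree $\closedsidesoltree{\coding{P}}$, and in either case the relevant inverse map is computable. Hence a reduction $\findHopen\weireducible\TCBaire^*$ would immediately yield $\openRamsey\weireducible\TCBaire^*$, and it therefore suffices to refute the latter.

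To rule out $\openRamsey\weireducible\TCBaire^*$ I would invoke Theorem \ref{thm:openRT_not_<=_sTCbaire_x_lim^n}, which asserts that $\openRamsey\not\weireducible\sTCBaire\times\mflim^{(n)}$ for every $n\in\mathbb{N}$. The plan is to show that any putative reduction $\openRamsey\weireducible\TCBaire^*$ can be unravelled into a reduction $\openRamsey\weireducible\sTCBaire\times\mflim^{(n)}$ for some finite $n$: by the cylindrical decomposition, such a reduction factors through some computable function producing a finite tuple $(T_1,\dots,T_k)$ of trees on $\mathbb{N}$; using a single query to $\sTCBaire$ applied to the join $T_1\oplus\cdots\oplus T_k$ (and iterated logarithmically many times, absorbed by finitely many jumps of $\mflim$) one locates the ill-founded coordinates and extracts a path from each, while the arithmetic post-processing needed to knit these into a $\TCBaire^*$-style output is done by $\mflim^{(n)}$.

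The main obstacle, as always with $\TCBaire^*$, is making this finitary simulation uniform in the arity $k$ of the $\TCBaire^*$-query, since $k$ depends on the particular $\openRamsey$-instance fed into the reduction. The key observation is that for a fixed reduction functional the backward map can read only finitely many coordinates of its oracle before committing to an output, so the effective arity of the $\TCBaire^*$-query is bounded by a function of the input prefix length; this allows the jump level $n$ in the resulting $\sTCBaire\times\mflim^{(n)}$-reduction to be taken uniform, contradicting Theorem \ref{thm:openRT_not_<=_sTCbaire_x_lim^n} and concluding the proof.
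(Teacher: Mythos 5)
Your forward direction is fine and matches the paper: $\TCBaire\weireducible\findHopen$ (\thref{prop:tcbaire<=findHopen}) plus closure of $\findHopen$ under parallel product (\thref{prop:findHopen_closed_product}) gives $\TCBaire^*\weireducible\findHopen$ (minor quibble: $\findHopen^{[k]}$ denotes the $k$-fold \emph{compositional} product in this paper, and passing from $f\times f\weireducible f$ to $f^*\weireducible f$ needs the reduction to be uniform in the arity, which the $\opencodepairing$ construction does provide).

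The strictness argument, however, has a genuine gap. Your plan is to refute $\openRamsey\weireducible\TCBaire^*$ by ``unravelling'' it into a reduction $\openRamsey\weireducible\sTCBaire\times\mflim^{(n)}$ and then invoking \thref{thm:openRT_not_<=_sTCbaire_x_lim^n}. But the unravelling does not work as described. First, a single $\sTCBaire$-query on the join $T_1\oplus\cdots\oplus T_k$ only tells you whether \emph{some} $T_i$ is ill-founded and produces a path through one of them; it neither identifies all the ill-founded coordinates nor yields a path through each, and ``iterating logarithmically many times'' is a compositional product, not something absorbed by a parallel factor $\mflim^{(n)}$. Second, the arity $k$ of the $\TCBaire^*$-query genuinely varies with the $\openRamsey$-instance and is not uniformly bounded; the claim that the backward functional ``reads only finitely many coordinates before committing'' gives at best an instance-dependent bound, and since $\boldfaceSigma^0_1(\Ramsey)$ is not compact there is no way to extract a uniform $n$. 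What your argument really needs is $\TCBaire^*\weireducible\sTCBaire\times\mflim^{(n)}$ for some fixed $n$, which by \thref{thm:arith_reduction_tcbaire} is exactly the assertion $\TCBaire^*\weiarithreducible\TCBaire$ --- a statement nowhere established in the paper and not to be expected. The paper's separation is far more direct and you already have all the pieces: $\chiPi\weireducible\sTCBaire\weireducible\findHopen$ by \thref{prop:tcbaire<=findHopen}, whereas $\chiPi\not\weireducible\TCBaire^*$ by \cite[Cor.\ 8.6]{KMP20}; hence $\findHopen\not\weireducible\TCBaire^*$.
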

\begin{proof}
	The reduction follows from $\TCBaire\weireducible\findHopen$ (\thref{prop:tcbaire<=findHopen}) and the fact that $\findHopen$ is closed under product (\thref{prop:findHopen_closed_product}).
	The fact the reduction is strict follows from the fact that $\findHopen$ computes $\chiPi$ (\thref{prop:tcbaire<=findHopen}), while $\TCBaire^*$ does not (\cite[Cor.\ 8.6]{KMP20}).
\end{proof}

This shows that $\findHopen$ is properly stronger than any multivalued function arising from statements related to $\mathrm{ATR}_0$ studied so far.

\begin{theorem}
	\thlabel{thm:lim^n*openRT<=findHopen}
	$\CBaire\compproduct\openRamsey \weireducible \findHopen$.
\end{theorem}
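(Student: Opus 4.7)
The plan is to merge, within a single instance of $\findHopen$, the route $\openRamsey \weireducible \findHopen$ sketched just before Proposition \ref{prop:tcbaire<=findHopen} with the route $\sTCBaire \weireducible \findHopen$ proved in Proposition \ref{prop:tcbaire<=findHopen}. First, via the cylindrical decomposition, I would take an input of $\CBaire \compproduct \openRamsey$ to be a pair $(\coding{P}, e)$, where $\coding{P}$ names an open set $P \in \boldfaceSigma^0_1(\Ramsey)$ and $e$ is a Turing index such that, for every $f \in \HomSol(P)$, $\Phi_e$ applied to $f$ (and $\coding{P}$) produces a name of a tree $T_f \subseteq \finBaire$ with $[T_f] \neq \emptyset$. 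The target is to output, from a single call to $\findHopen$, some $x \in [T_f]$.

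The key step is to build an open set $Q \subseteq \Ramsey$ of the form $Q = Q_{\mathrm{land}} \cup Q_{\mathrm{avoid}}$ whose homogeneous solutions landing in $Q$ encode a pair $(f, x)$ with $f \in \HomSol(P)$ and $x \in [T_f]$. Using disjoint ranges for the ``markers'' (powers of $2, 3, 5, 7$ via $\elementpower{}$ and $\stringcodepower{}$), Proposition \ref{prop:sets_with_disjoint_solutions} decomposes $\HomSol(Q) \cap Q$ into the two pieces. $Q_{\mathrm{land}}$ mirrors the $\elementpower{2}(\coding{P})$ piece of the $\openRamsey \weireducible \findHopen$ sketch (so that decoding gives an $f \in \HomSol(P) \cap P$ by Lemma \ref{lem:elementpower_preserves_solutions}), interleaved with a $\stringcodepower{5}$-style $\describepath{}$-encoding of a path through the enumerated approximation $T_\sigma$ to $T_f$, where $\sigma$ is the recovered prefix of $f$. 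Similarly, $Q_{\mathrm{avoid}}$ combines the $\describepath{\stringcodepower{3}}(\closedsidesoltree{\coding{P}})$ piece (so that decoding, via Lemma \ref{lem:solution_closed_side_iff_path_through_tree}, gives an $f \in \HomSol(P) \setminus P$) with a $\stringcodepower{7}$-style $\describepath{}$-encoding of a path through the same $T_\sigma$. Only $\tau$-extensions confirmed by $\Phi_e$ on the current $\sigma$-prefix are admitted into $\coding{Q}$, so that $Q$ is genuinely open and uniformly computable from $(\coding{P}, e)$ by the operations of Lemma \ref{lem:computable_operations}.

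Given $h \in \HomSol(Q) \cap Q$, the value $h(0)$ identifies which piece $h$ lies in. Lemma \ref{lem:elementpower_preserves_solutions} (for $Q_{\mathrm{land}}$) or Lemma \ref{lem:clopen_describes_paths} combined with Lemma \ref{lem:solution_closed_side_iff_path_through_tree} (for $Q_{\mathrm{avoid}}$) yields $f \in \HomSol(P)$, while the paired $\describepath{}$-decoding yields $x \in [T_f]$. The set $Q$ is in $\dom(\findHopen)$ because, given any $f \in \HomSol(P)$ (which exists by Nash-Williams' theorem) and any $x \in [T_f]$ (which exists by validity of the input), the natural interleaved encoding of $(f, x)$ is a homogeneous solution landing in the appropriate piece of $Q$. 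Together the construction $(\coding{P}, e) \mapsto Q$ and the decoding map witness $\CBaire \compproduct \openRamsey \weireducible \findHopen$.

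The main technical obstacle is the dynamic dependence of $T_f$ on $f$: only finite prefixes of $f$ are available as $Q$ is enumerated, so the approximations $T_\sigma \subseteq T_f$ grow gradually and the $\tau$-component cannot be chosen freely. Handling this correctly---so that $Q$ is open, admits a homogeneous solution landing in it, and forces every such solution to decode to a genuine $x$ in the true $T_f$ rather than a ``virtual'' approximation---requires careful bookkeeping analogous to the $\describepath{}$ construction of Lemma \ref{lem:clopen_describes_paths}, together with the observation that every prefix of a path $x \in [T_f]$ is eventually confirmed by some finite prefix $\sigma \prefix f$, so arbitrarily long interleaved codes are admitted into $\coding{Q}$.
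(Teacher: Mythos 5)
There is a genuine gap, and it sits exactly where you locate ``the main technical obstacle'': the piece $Q_{\mathrm{land}}$ cannot work as described. Suppose $h$ is the interleaved code of a pair $(f,x)$ with $f\in\HomSol(P)\cap P$ and $x\in[T_f]$. Homogeneity of $h$ requires \emph{every} infinite subsequence $h'\substring h$ to lie in $Q$. Such an $h'$ decodes to a pair $(f',x')$ where $f'\substring f$ (so $f'\in\HomSol(P)\cap P$, fine) but where the $\tau$-component now has to be validated against the tree $T_{f'}=\Phi_e(f')$ computed from the \emph{new} first component. Since $\Phi_e$ is an arbitrary computable functional, $T_{f'}$ can be a tree entirely unrelated to $T_f$, and there is no reason why the surviving $\tau$-nodes should be comparable nodes of $T_{f'}$. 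So the ``natural interleaved encoding of $(f,x)$'' is not a homogeneous solution, and $Q_{\mathrm{land}}$ need not contribute anything to $\HomSol(Q)\cap Q$; your appeal to the fact that prefixes of $x$ are eventually confirmed by prefixes of $f$ only explains why the intended witness is enumerated into $\coding{Q}$, not why it is homogeneous. The underlying obstruction is structural: $\HomSol(Q)\cap Q$ is always closed under subsequences, whereas the set of codes of pairs $(f,x)$ with $x\in[T_f]$ is not, because passing to a subsequence changes the tree. (Your $Q_{\mathrm{avoid}}$ piece is essentially sound, because there both components are realized as a path through a single fixed tree pairing $\closedsidesoltree{\coding{P}}$ with $\Phi_e$, and $\describepath{}$ of a fixed tree does respect subsequences.)

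The paper's proof avoids this by never encoding a solution landing in $P$ into the $\findHopen$ instance. When every homogeneous solution of $P$ lands in $P$, it uses $\TwfindHopen\weireducible\CBaire$ (\thref{thm:twfindopen<=atr2}) to reduce the \emph{entire} composition $\CBaire\circ\Phi_e\circ(\id{}\times\TwfindHopen)$ to a single $\CBaire$-instance, i.e.\ to finding a path through one fixed ill-founded tree $\Phi_A(\coding{p_1,p_2})$, which is then encoded via $\describepath{\psi_3}$; the case where some solution avoids $P$ is handled by your $Q_{\mathrm{avoid}}$-style tree $S$. Since in the mixed case a single union set could return a solution landing in the wrong piece, the paper uses two separate $\findHopen$-instances $U,V$ together with one $\chiPi$-instance (to decide whether $S$ is well-founded, hence which answer to trust), and then invokes $\chiPi\weireducible\findHopen$ (\thref{prop:tcbaire<=findHopen}) and closure under parallel product (\thref{prop:findHopen_closed_product}). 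To repair your argument you would need both this reduction of the ``landing'' branch to a fixed path-finding problem and an external means (such as $\chiPi$) of telling which branch is valid.
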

\begin{proof}
	By the cylindrical decomposition we can write 
	\[ \CBaire\compproduct\openRamsey \weiequiv \CBaire\circ \Phi_e \circ (\id{}\times \openRamsey) \]
	for some computable function $\Phi_e$. It is enough to show that  
	\[ \CBaire\circ\Phi_e\circ (\id{}\times\openRamsey)\weireducible \findHopen\times \findHopen\times \chiPi \]
	and the claim will follow from $\chiPi\weireducible\findHopen$ (\thref{prop:tcbaire<=findHopen}) and the fact that $\findHopen$ is closed under product (\thref{prop:findHopen_closed_product}). 

	Let $\coding{p_1,p_2}$ be an input for $\CBaire\circ\Phi_e\circ (\id{}\times\openRamsey)$ and let $P$ be the open set with name $p_2$. We can consider the tree $\closedsidesoltree{p_2}$ of homogeneous solutions for $P$ that avoid $P$. We can now compute a tree $R$ s.t.\ for every $x,y\in\Baire$,
	\[x\in [\closedsidesoltree{p_2}] \text{ and } y\in [\Phi_e (p_1,x)] \iff \coding{x,y}\in [R].  \]
	Using the canonical computable bijection between $\Baire$ and $\Ramsey$ it is easy to transform $R$ into a tree $S\in \repincTree$ so that from any path through $S$ we can compute a path through $R$. 
	
	Recall that $\TwfindHopen\weireducible\CBaire$ (see \thref{thm:twfindopen<=atr2}). Since $\CBaire$ is closed under compositional product we have that $\CBaire\circ\Phi_e\circ (\id{}\times\TwfindHopen)\weireducible \CBaire$. Let $\Phi_A,\Psi_A$ be two computable maps witnessing the reduction. In particular, $\Phi_A(\coding{p_1,p_2})$ is an ill-founded subtree of $\finBaire$ and every path through $\Phi_A(\coding{p_1,p_2})$ computes a solution for $\CBaire\circ\Phi_e\circ (\id{}\times\TwfindHopen)$ via $\Psi_A$. Let also $\stringcodepower{n}$ be the function that maps $\sigma$ to $n^{\coding{\sigma}+1}$, where $\coding{\sigma}$ is the code of $\sigma$. 
	
	Let $D:= \describepath{\psi_2}(S)$ and define 
	\begin{gather*}
		U:= D \cup \describepath{\psi_3}(\Phi_A(\coding{p_1,p_2}));\\
		V:= D \cup \solovay{\psi_3}(S).
	\end{gather*}

	Let us first show that $U,V\in \dom(\findHopen)$. Notice that if $P\in\dom(\findHclosed)$ then $[\closedsidesoltree{p_2}]\neq \emptyset$ and $[S]\neq\emptyset$. By \thref{lem:clopen_describes_paths} we have that $\HomSol(D)\cap D\neq\emptyset$ and therefore $U,V\in\dom(\findHopen)$. On the other hand, assume $P\notin\dom(\findHclosed)$. Since $\TwfindHopen$ is total we have that $\Phi_A(\coding{p_1,p_2})$ is ill-founded. This implies that $\describepath{\psi_3}(\Phi_A(\coding{p_1,p_2}))$ has solutions that land in itself (again by \thref{lem:clopen_describes_paths}), and hence $U\in\dom(\findHopen)$. Moreover, since $P\notin\dom(\findHclosed)$ we have that $[S]=\emptyset$ and therefore $\HomSol(\solovay{\psi_3}(S))\cap \solovay{\psi_3}(S)\neq \emptyset$, which shows that $V\in\dom(\findHopen)$.

	Let $(f,g,b)\in (\findHopen\times \findHopen\times \chiPi)(U,V,S)$. We distinguish $2$ cases:
\begin{itemize}
	\item if $b=1$ then $[S]=\emptyset$, and hence $P\in \dom(\wfindHopen)$. By \thref{prop:sets_with_disjoint_solutions} $f$ lands in $\describepath{\psi_3}(\Phi_A(\coding{p_1,p_2}))$.
	
	In particular $f$ computes a path through $\Phi_A(\coding{p_1,p_2})$ (\thref{lem:clopen_describes_paths}). Moreover $\TwfindHopen(P)=\HomSol(P)=\openRamsey(P)$, so that $f$ computes also a solution for the compositional product (by applying $\Psi_A$ to the path).
	\item if $b=0$ then $[S]\neq\emptyset$ and hence $P\in\dom(\findHclosed)$. Moreover $\HomSol(V)\cap V = \HomSol(D)\cap D$. Indeed, by \thref{prop:sets_with_disjoint_solutions}, 
	\[ \HomSol(V)\cap V = (\HomSol(D)\cap D)\cup (\HomSol(\solovay{\psi_3}(S)) \cap \solovay{\psi_3}(S)) \]
	and $\HomSol(\solovay{\psi_3}(S)) \cap \solovay{\psi_3}(S)=\emptyset$ by \thref{lem:solutions_solovay_open_set}. 
	In this case $g$ computes a path through $S$, hence a path through $R$, and eventually a solution for the compositional product (by projecting the path through $R$). 
\end{itemize}
	The previous two points describe a way to compute a solution for the compositional product given a solution to $\findHopen\times \findHopen\times \chiPi$, and therefore conclude the proof.
\end{proof}

Notice that if $P\notin\dom(\wfindHopen)$ then we cannot (in general) use $U$ to compute a solution for the compositional product. Indeed, it may be that $\HomSol(P)\cap P\neq \emptyset$ and the solution obtained from $\findHopen(U)$ lands in $\describepath{\psi_3}(\Phi_A(\coding{p_1,p_2}))$. However, since every string is a valid solution for $\TwfindHopen(P)$, the solution we obtain is not guaranteed to have any connection with the original problem.

Notice moreover that $\openRamsey\strictlyweireducible\findHopen$ as the former is not closed under product with $\CBaire$ (\thref{cor:openRamsey_<_cbaire_x_openRamsey}) while the latter is closed under product (\thref{prop:findHopen_closed_product}) and computes $\CBaire$ (see \thref{prop:tcbaire<=findHopen}). We will prove a stronger result in \thref{thm:openRamsey_arith<_findHopen}.

\subsection{A \texorpdfstring{$0-1$}{0-1} law for strong Weihrauch reducibility}
\label{subsec:strong_wei}
We now characterize the strength of the Ramsey-related multivalued functions from the point of view of strong Weihrauch reducibility. 

\begin{proposition}
	\thlabel{thm:strong_wei_characterization}
	Let $\boldfaceGamma$ be a definable (boldface) pointclass that is downward closed with respect to Wadge reducibility. Assume also that every $P\in\boldfaceGamma(\Ramsey)$ is Ramsey and that, for every $h\in\Ramsey$,
	\[ \boldfaceGamma([h]^\mathbb{N}) = \{ P\cap [h]^\mathbb{N} \st P \in \boldfaceGamma(\Ramsey) \}. \]
	If $\mathsf{R}\pmfunction{\boldfaceGamma(\Ramsey)}{\Ramsey}$ is a multivalued function, s.t.\ for every $x\in\dom(\mathsf{R})$, $\mathsf{R}(x)=\HomSol(x)$, then
	\[ \id{2} \not\strongweireducible \mathsf{R}. \]
	In particular $\id{2}$ (and, a fortiori, $\UCBaire$) is not strongly Weihrauch reducible to $\wfindHopen$, $\wfindHclosed$, $\wfindHclopen$, $\openRamsey$, $\clopenRamsey$.
\end{proposition}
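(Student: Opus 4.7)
The plan is to argue by contradiction using the self-similarity of the Ramsey property: any strong reduction $\Phi,\Psi$ would force the backward functional $\Psi$ to output two different values on the same string, obtained by iterating the Galvin-Prikry argument of \thref{prop:galvin_prikry_subset}.

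First I would suppose $\id{2}\strongweireducible\mathsf{R}$, witnessed by computable $\Phi$ and $\Psi$. Then $\Phi(0)$ and $\Phi(1)$ are (names of) sets $P_0,P_1\in\dom(\mathsf{R})\subseteq\boldfaceGamma(\Ramsey)$. Since $\mathsf{R}(P_i)=\HomSol(P_i)$ by hypothesis, every $h\in\HomSol(P_i)$ is a valid output of $\mathsf{R}(P_i)$, and hence is the image of $\Phi(i)$ under some realizer of $\mathsf{R}$. By the definition of strong reduction, this forces $\Psi(h)=i$ for every $h\in\HomSol(P_i)$.

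The key step is to produce a single string $h$ that is simultaneously in $\HomSol(P_0)$ and $\HomSol(P_1)$. Apply \thref{prop:galvin_prikry_subset} to $P_0$ and any $f\in\Ramsey$ to obtain $h_0\substring f$ with $h_0\in\HomSol(P_0)$. Now apply \thref{prop:galvin_prikry_subset} again, this time to $P_1$ and $h_0$, to obtain $h_1\substring h_0$ with $h_1\in\HomSol(P_1)$. Since $h_1\substring h_0$ we have $[h_1]^\mathbb{N}\subseteq[h_0]^\mathbb{N}$, and since $h_0$ is homogeneous for $P_0$ (so either $[h_0]^\mathbb{N}\subseteq P_0$ or $[h_0]^\mathbb{N}\cap P_0=\emptyset$), the same dichotomy holds for $h_1$; in other words $h_1\in\HomSol(P_0)$ as well. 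Therefore $\Psi(h_1)=0$ and $\Psi(h_1)=1$, a contradiction.

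For the ``in particular'' statement, I would simply note that each of $\wfindHopen$, $\wfindHclosed$, $\wfindHclopen$, $\openRamsey$, $\clopenRamsey$ satisfies $\mathsf{R}(x)=\HomSol(x)$ on its domain (the weak variants achieve this because their domain is exactly where $\HomSol$ agrees with $\HomSol\cap P$ or $\HomSol\setdifference P$), with $\boldfaceGamma$ being either the open or the clopen pointclass, both of which satisfy the hypotheses of \thref{prop:galvin_prikry_subset} by Nash-Williams' theorem. The conclusion about $\UCBaire$ follows because $\id{2}\strongweireducible\UCBaire$ trivially, so if $\UCBaire$ were strongly reducible to any of these functions, $\id{2}$ would be too. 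The only real obstacle is verifying that $\Psi$ is genuinely applied to $h_1$ alone (without access to the original input) and that the two implicit realizers producing $h_1$ from $\Phi(0)$ and from $\Phi(1)$ really exist; both follow directly from the definition of multivalued realizers and the identity $\mathsf{R}(x)=\HomSol(x)$.
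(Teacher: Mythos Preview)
Your proof is correct and follows essentially the same argument as the paper's: assume a strong reduction, observe that $\Psi(h)=i$ for every $h\in\HomSol(P_i)$, and then use \thref{prop:galvin_prikry_subset} to produce a single $g$ lying in both $\HomSol(P_0)$ and $\HomSol(P_1)$. The only cosmetic difference is that the paper starts directly with some $f\in\HomSol(P_0)$ (which exists since every $\boldfaceGamma$-set is Ramsey) rather than first invoking \thref{prop:galvin_prikry_subset} to manufacture such an $f$; your extra step is harmless but unnecessary.
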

\begin{proof}
	Assume there is a strong Weihrauch reduction witnessed by the computable maps $\Phi,\Psi$. Let $p_i:=\Phi(i)$ (with a small abuse of notation we are identifying $i$ with its name) and let $P_i:=\repmap{\boldfaceGamma(\Ramsey)}(p_i)$. By definition of strong Weihrauch reducibility, for every $f\in\HomSol(P_i)$ we have $\Psi(f)=i$. Fix $f\in\HomSol(P_0)$ and consider the set $[f]^\mathbb{N}\cap P_1 \in \boldfaceGamma([f]^\mathbb{N})$. By \thref{prop:galvin_prikry_subset} we have that every pointset in $\boldfaceGamma([f]^\mathbb{N})$ has the Ramsey property, therefore there is a $g\substring f$ s.t.\ $[g]^\mathbb{N}\subset P_1 \cap [f]^\mathbb{N}$ or $[g]^\mathbb{N}\subset \setcomplement[{[f]^\mathbb{N}}]{P_1}$. In both cases $g\in\HomSol(P_1)$ and therefore $\Psi(g)=1$. However $g\substring f$, hence $g\in\HomSol(P_0)$ and so $\Psi(g)=0$, which is a contradiction.
\end{proof}

On the other hand, \thref{thm:findclopen=findclosed=cbaire} shows that $\CBaire\strongweiequiv \findHclosed \strongweiequiv \findHclopen$, which implies that $\findHclosed$ and $\findHclopen$ are cylinders. Since $\findHopen$ is also a cylinder (\thref{thm:findHopen_cylinder}) we have that, for every $g$ and every $f\in \{\findHopen,\findHclosed,\findHclopen \}$ 
\[ g\weireducible f \iff g \strongweireducible f. \]
This shows that, from the point of view of strong Weihrauch reducibility, the principles related to the open and clopen Ramsey theorems are either very weak (they do not strongly uniformly compute the identity on the $2$-element space) or they are as strong as possible (the notions of Weihrauch reducibility and strong Weihrauch reducibility coincide).

\section{Arithmetic Weihrauch reducibility}
\label{sec:artihemtic_reducibility}
Let us now define the notion of arithmetic Weihrauch reducibility, which is obtained by relaxing the computability requirements in the definition of Weihrauch reducibility. This was introduced in \cite[Def.\ 1.4]{GohThesis} (see also \cite[Def.\ 2.2]{GohWCWO} and \cite[Sec.\ 2.4]{KiharaADauriacChoice}).

\begin{definition}[Arithmetic Weihrauch reducibility]
Let $f\pmfunction{X}{Y}$, $g\pmfunction{Z}{W}$ be partial multivalued functions between represented spaces. We say that
$f$ is \textdef{arithmetically Weihrauch reducible} to $g$, and we write $f\weiarithreducible g$, if
\[ (\exists \text{ arithmetic } \Phi,\Psi\pfunction\Baire\to\Baire)(\forall G \realizer g )~ \Psi\coding{ (\idBaire, G\Phi) } \realizer f \]
where a function $F\pfunction\Baire\to\Baire$ is called \textdef{arithmetic} if there is $n\in\mathbb{N}$ s.t.\ $F\weireducible \mflim^{(n)}$.
\end{definition}
It is straightforward to see that $f\weireducible g \Rightarrow f \weiarithreducible g$. Notice moreover that if $f \weiarithreducible g$ then there exists $n$ s.t.\ $f\weireducible \mflim^{(n)}\compproduct g \compproduct \mflim^{(n)}$ (this follows directly from the definition of the compositional product).

\begin{proposition}
	\thlabel{thm:f_arith<_id}
	For every multivalued function $f$
	\[ (\exists n)(f\weireducible \mflim^{(n)}) \Rightarrow f\weiarithreducible \id{}. \]
\end{proposition}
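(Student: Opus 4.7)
The plan is to show that the hypothesis $f\weireducible \mflim^{(n)}$ lets us move the oracle work entirely into the backward functional of an arithmetic reduction, so that the identity suffices as an ``oracle''.

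Fix $n$ with $f\weireducible \mflim^{(n)}$, and let $\Phi_0,\Psi_0\pfunction\Baire\to\Baire$ be the computable forward and backward maps witnessing this reduction; so for every realizer $L\realizer \mflim^{(n)}$, the map $p\mapsto \Psi_0\coding{p,L\Phi_0(p)}$ is a realizer for $f$. Fix one such realizer $L$ of $\mflim^{(n)}$.

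To witness $f\weiarithreducible \id{}$, I define
\[ \Phi:=\Phi_0, \qquad \Psi(\coding{q_0,q_1}) := \Psi_0\coding{q_0,L(q_1)}. \]
The forward functional $\Phi$ is computable, hence trivially arithmetic (it reduces to $\mflim^{(0)}=\id{}$). The backward functional $\Psi$ is the composition of the computable projection $\coding{q_0,q_1}\mapsto q_1$, the map $L$, and the computable map $y\mapsto \Psi_0\coding{q_0,y}$ (with $q_0$ kept as a side parameter); this is precisely the schema of a Weihrauch reduction to $\mflim^{(n)}$, so $\Psi\weireducible \mflim^{(n)}$ and $\Psi$ is arithmetic.

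Since the only realizer of $\id{}$ (say on $\Baire$) is the identity on $\Baire$, for every such realizer $G$ we have
\[ \Psi\coding{\idBaire(p), G\Phi(p)} = \Psi\coding{p,\Phi_0(p)} = \Psi_0\coding{p,L\Phi_0(p)}, \]
which by choice of $\Phi_0,\Psi_0$ realizes $f$. Hence $f\weiarithreducible \id{}$.

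There is no real obstacle here: the point is simply that the definition of arithmetic Weihrauch reducibility permits arithmetic pre- and post-processing, which is powerful enough to absorb any iterate of $\mflim$ without needing anything from the ``oracle''.
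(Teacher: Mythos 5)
Your proof is correct and uses essentially the same idea as the paper: the arithmetic pre/post-processing allowed by $\weiarithreducible$ absorbs the $\mflim^{(n)}$ oracle entirely. The only (immaterial) difference is that you fold $\mflim^{(n)}$ into the backward functional $\Psi$, whereas the paper first upgrades to a strong reduction (using that $\mflim^{(n)}$ is a cylinder) and sets $\Phi:=\Psi_f\circ\mflim^{(n)}\circ\Phi_f$ with $\Psi:=\id{}$; your version sidesteps that upgrade because the backward functional already receives the original input.
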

\begin{proof}
	Assume there is a strong reduction $f\strongweireducible\mflim^{(n)}$ witnessed by the computable maps $\Phi_f,\Psi_f$. It is easy to see that the maps $\Phi:= \Psi_f\circ \mflim^{(n)} \circ \Phi_f$ and $\Psi:= \id{}$ witness the reduction $f\weiarithreducible \id{}$.
\end{proof}

\begin{corollary}
\thlabel{prop:ccantor_arith_equiv_id}
$\id{}\weiarithequiv \CCantor  \weiarithequiv \LPO \weiarithequiv \mflim^{(n)}$.
\end{corollary}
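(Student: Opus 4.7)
The plan is to leverage Proposition \ref{thm:f_arith<_id} in one direction together with the trivial observation that $\id{}$ Weihrauch-reduces to any multivalued function with non-empty domain in the other.

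For the direction $f \weiarithreducible \id{}$ with $f \in \{\CCantor, \LPO, \mflim^{(n)}\}$, I would check the hypothesis of \thref{thm:f_arith<_id}, namely that each such $f$ is Weihrauch-reducible to some iterate of $\mflim$, and then invoke the proposition. The case $\mflim^{(n)} \weireducible \mflim^{(n)}$ is immediate. The reduction $\LPO \weireducible \mflim$ is classical: on input $p\in\Baire$, let the forward functional produce the sequence $\sequence{q_k}{k\in\mathbb{N}}$ with $q_k=0$ if $0\in\ran(p[k])$ and $q_k=1$ otherwise, which converges to $\LPO(p)$. For $\CCantor \weireducible \mflim^{(k)}$ for some $k$, one may invoke the low basis theorem, which produces uniformly in a non-empty computable tree $T\subset \finStrings{2}$ a path through $T$ that is computable from $\emptyset'$.

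For the reverse direction, $\id{}\weireducible f$ (and hence $\id{}\weiarithreducible f$) holds for every $f$ with non-empty domain: fix any computable $x_0\in\dom(f)$; on input $p\in\Baire$, the forward functional returns a name for $x_0$, and the backward functional takes $\coding{p,y}$ and returns $p$, ignoring the putative solution $y$. Instantiating this with $f \in \{\CCantor, \LPO, \mflim^{(n)}\}$ gives the reverse reductions.

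Chaining these yields all four arithmetic equivalences. No real obstacle is expected: the corollary follows immediately from \thref{thm:f_arith<_id} together with the standard fact that $\CCantor$, $\LPO$ and $\mflim^{(n)}$ are all ``arithmetic'' in the sense of being Weihrauch-reducible to some iterate of $\mflim$.
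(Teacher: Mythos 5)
Your proposal is correct and follows the same route as the paper, which simply cites Proposition \thref{thm:f_arith<_id} together with the trivial reductions $\id{}\weireducible\CCantor$, $\id{}\weireducible\LPO$, $\id{}\weireducible\mflim^{(n)}$; you merely spell out the details (e.g.\ $\LPO\weireducible\mflim$ and $\CCantor\weireducible\mflim^{(k)}$) that the paper leaves implicit. The only nitpick is that $\id{}\weireducible f$ requires a computable point in $\dom(f)$, not just non-emptiness, but this holds for all three functions here.
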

\begin{proof}
	Straightforward from \thref{thm:f_arith<_id} and the fact that $\id{}$ is Weihrauch reducible to $\CCantor$, $\LPO$ and $\mflim^{(n)}$.
\end{proof}

\begin{proposition}
\thlabel{prop:compprod_arith_reduction}
For every (partial) multivalued functions $f,g$, if $f\weiarithreducible \id{}$ then $f\compproduct g \weiarithequiv g\compproduct f \weiarithequiv g$.
\end{proposition}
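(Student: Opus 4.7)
My plan is to reduce the statement to the basic fact that $f$ being arithmetically equivalent to $\id{}$ means $f$ itself is arithmetic, and then push the arithmetic cost of applying $f$ through the compositional product on either side.

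First I would dispatch the easy directions: $g\weireducible f\compproduct g$ and $g\weireducible g\compproduct f$ are standard properties of the compositional product. Next I would reformulate the hypothesis using the remark right after the definition of $\weiarithreducible$: from $f\weiarithreducible\id{}$ we obtain $f\weireducible\mflim^{(n)}\compproduct\id{}\compproduct\mflim^{(n)}\weiequiv\mflim^{(n')}$ for some $n'\in\mathbb{N}$. In particular, $f$ has an arithmetic realizer. Since $\compproduct$ is monotone in each argument under $\weireducible$, we get
\[ f\compproduct g\weireducible \mflim^{(n')}\compproduct g\quad\text{and}\quad g\compproduct f\weireducible g\compproduct \mflim^{(n')}, \]
so by transitivity of $\weiarithreducible$ (which extends $\weireducible$) it suffices to show $\mflim^{(n')}\compproduct g\weiarithreducible g$ and $g\compproduct\mflim^{(n')}\weiarithreducible g$.

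Both remaining reductions would be obtained via cylindrical decomposition. For $\mflim^{(n')}\compproduct g\weiarithreducible g$: write $\mflim^{(n')}\compproduct g\weiequiv(\idBaire\times\mflim^{(n')})\circ K\circ(\idBaire\times g)$ for some computable $K$. The forward functional of the arithmetic reduction is then the (computable) forward map from the cylindrical decomposition, which produces an input $v$ for $g$ from the original input $x$. The backward functional takes a $g$-solution $w\in g(v)$, computably applies $K$ together with the carried-along component $u$ to get an input for $\mflim^{(n')}$, arithmetically applies $\mflim^{(n')}$, and finally applies the backward functional of the cylindrical decomposition; all pieces are arithmetic. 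The reduction $g\compproduct\mflim^{(n')}\weiarithreducible g$ is symmetric, with the roles of forward and backward swapped: the forward functional is now arithmetic (it includes an application of $\mflim^{(n')}$ to produce an input for $g$), while the backward functional is computable.

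The main obstacle, if any, is minor: one must verify that arithmeticity is closed under composition so that $\weiarithreducible$ is transitive, and that the forward/backward decomposition of the cylindrical form can absorb an $\mflim^{(n')}$ on whichever side it sits. Both are routine, so the proof is essentially bookkeeping once the arithmetic character of $f$ is extracted from the hypothesis.
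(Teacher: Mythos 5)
Your proof is correct and takes essentially the same route as the paper's: a cylindrical decomposition of the compositional product, with the arithmetic content of $f$ (extracted from $f\weiarithreducible\id{}$, whether phrased as arithmetic witnesses $\Phi_f,\Psi_f$ or as $f\weireducible\mflim^{(n')}$) absorbed into the arithmetic forward/backward functionals of a reduction to $g$. The only minor imprecision is your claim that in the $g\compproduct\mflim^{(n')}$ case the backward functional is computable; it must recompute the first component of the cylindrical decomposition from the original input, which involves $\mflim^{(n')}$, so it is merely arithmetic --- harmless, since the definition allows arithmetic backward functionals.
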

\begin{proof}
Let us first prove $f\compproduct g \weiarithequiv g$, the other equivalence is analogous. We only need to prove that $f\compproduct g \weiarithreducible g$ as the converse reduction is trivial. We can assume w.l.o.g.\ that $f,g$ are (partial) multivalued functions $\pmfunction{\Baire}{\Baire}$ (see e.g.\ \cite[Lem.\ 3.8]{BGP17}).
By the cylindrical decomposition, we can write 
	\[ f\compproduct g \weiequiv (\id{} \times f) \circ \Phi_e \circ (\id{}\times g)  \]
for some computable $\Phi_e$. In particular
	\[ (\id{}\times f )\circ \Phi_e \circ (\id{}\times g)(\coding{p_1,p_2}) = \coding{\Phi_1(p_1, g(p_2)),f\circ \Phi_2(p_1, g(p_2)) }\]
where $\Phi_1,\Phi_2$ are the computable functions s.t.\ $\Phi_e(p)=\coding{\Phi_1(p),\Phi_2(p)}$.

Let $\Phi_f, \Psi_f$ be two arithmetic maps witnessing the reduction $f \weiarithreducible \id{}$. It is straightforward to see that the maps
\[ \Phi := \coding{p_1,p_2}\mapsto p_2, \]
\[ \Psi := (\coding{p_1,p_2}, q) \mapsto \coding{\Phi_1(p_1, q),\Psi_f(\Phi_2(p_1,q), \Phi_f \Phi_2(p_1, q)) }\]
witness the reduction $(\id{} \times f) \circ \Phi_e \circ (\id{}\times g) \weiarithreducible g$.
\end{proof}

\begin{corollary}
\thlabel{cor:wfindHclosed_arith_equiv_cbaire}
$\wfindHclosed \weiarithequiv \CBaire$.
\end{corollary}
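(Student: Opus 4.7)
The plan is a very short argument that simply combines the two preceding propositions. The forward direction $\wfindHclosed \weiarithreducible \CBaire$ is immediate from \thref{prop:wfindHclosed<=cbaire}, since $\wfindHclosed \weireducible \CBaire$ implies $\wfindHclosed \weiarithreducible \CBaire$.

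For the reverse direction, \thref{prop:wfindHclosed<=cbaire} also gives $\CBaire \weiequiv \CCantor \compproduct \wfindHclosed$. By \thref{prop:ccantor_arith_equiv_id} we have $\CCantor \weiarithequiv \id{}$, and then \thref{prop:compprod_arith_reduction} (applied with $f = \CCantor$ and $g = \wfindHclosed$) yields
\[ \CCantor \compproduct \wfindHclosed \weiarithequiv \wfindHclosed. \]
Chaining these equivalences we conclude $\CBaire \weiarithequiv \wfindHclosed$, completing the proof.

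No step looks like a real obstacle here: the whole content is that $\CCantor$ collapses to the identity under arithmetic reducibility, so the extra $\CCantor$ appearing in the characterization $\CBaire \weiequiv \CCantor \compproduct \wfindHclosed$ becomes invisible once we pass to $\weiarithequiv$. The only thing to be careful about is invoking \thref{prop:compprod_arith_reduction} with the correct hypothesis ($\CCantor \weiarithreducible \id{}$, which is part of \thref{prop:ccantor_arith_equiv_id}).
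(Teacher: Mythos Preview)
Your proof is correct and follows exactly the same approach as the paper's proof, which also derives the result from \thref{prop:wfindHclosed<=cbaire}, \thref{prop:ccantor_arith_equiv_id}, and \thref{prop:compprod_arith_reduction}. You have simply spelled out the two directions a bit more explicitly than the paper does.
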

\begin{proof}
This follows from $\CBaire \weiequiv \CCantor \compproduct \wfindHclosed$ (\thref{prop:wfindHclosed<=cbaire}), using \thref{prop:ccantor_arith_equiv_id} and \thref{prop:compprod_arith_reduction}.
\end{proof}

\begin{lemma}
	\thlabel{lem:arit_reduction_implies_normal_reduction}
	Let $g$ be a (partial multivalued) function that computes every arithmetic function and is closed under compositional product. For every (partial) multivalued function $f$ 
	\[ f\weiarithreducible g \Rightarrow f \weireducible g. \]
\end{lemma}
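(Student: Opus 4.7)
The plan is to unfold the definition of arithmetic Weihrauch reducibility and observe that, when $g$ is strong enough to absorb the arithmetic pre- and post-processing, the reduction collapses to an ordinary Weihrauch reduction.

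Suppose $f \weiarithreducible g$, witnessed by arithmetic $\Phi, \Psi \pfunction \Baire \to \Baire$, so that for every realizer $G$ of $g$ the map $p \mapsto \Psi\coding{p, G\Phi(p)}$ realizes $f$. By definition of arithmetic there is some $n \in \mathbb{N}$ with $\Phi \weireducible \mflim^{(n)}$ and $\Psi \weireducible \mflim^{(n)}$. By hypothesis $g$ computes every arithmetic function, so in particular $\mflim^{(n)} \weireducible g$, and by transitivity both $\Phi \weireducible g$ and $\Psi \weireducible g$.

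Now observe that the arithmetic reduction above already has the shape of a compositional product: it displays $f$ as "apply $\Phi$, then apply a realizer of $g$, then apply $\Psi$ with access to the original input retained in parallel". This shows $f \weireducible \Psi \compproduct g \compproduct \Phi$. Using monotonicity of $\compproduct$ under $\weireducible$ together with $\Phi, \Psi \weireducible g$, I deduce
\[ f \weireducible \Psi \compproduct g \compproduct \Phi \weireducible g \compproduct g \compproduct g. \]
Closure of $g$ under compositional product gives $g \compproduct g \compproduct g \weiequiv g$, and therefore $f \weireducible g$, as required.

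The only subtlety I anticipate is the bookkeeping that the backward functional $\Psi$ consumes both the original input $p$ and the output of the middle application of $g$. This is precisely what the compositional product is designed to accommodate: via the cylindrical decomposition, a representative of $\Psi \compproduct g \compproduct \Phi$ can be taken of the form $(\id{}\times \Psi) \circ \Phi_{e_1} \circ (\id{}\times g) \circ \Phi_{e_2} \circ (\id{}\times \Phi)$ for suitable computable $\Phi_{e_1}, \Phi_{e_2}$, which preserves the original input throughout. So no real obstacle is hidden in this step, and the argument is essentially a one-line chain of reductions.
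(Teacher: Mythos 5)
Your argument is correct and is essentially the paper's proof: the paper likewise passes from $f\weiarithreducible g$ to $f\weireducible \mflim^{(n)}\compproduct g\compproduct\mflim^{(n)}$ (a remark it makes right after the definition of arithmetic reducibility) and then absorbs the two arithmetic factors into $g$ using the two hypotheses. Your version merely names the factors $\Psi$ and $\Phi$ before reducing them to $g$, and your closing remark about the cylindrical decomposition is exactly the bookkeeping the paper leaves implicit.
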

\begin{proof}
	It is enough to notice that $f\weiarithreducible g$ implies that there exists $n$ s.t. 
	\[ f\weireducible \mflim^{(n)}\compproduct g \compproduct \mflim^{(n)}.\]
	The hypotheses on $g$ immediately yield the claim.	
\end{proof}

\begin{corollary}
	\thlabel{thm:cbaire_arith<_tcbaire}
$\CBaire\strictlyweiarithreducible\TCBaire$.
\end{corollary}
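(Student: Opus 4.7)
The reduction $\CBaire\weiarithreducible\TCBaire$ is immediate from the known strict Weihrauch reduction $\CBaire\strictlyweireducible\TCBaire$ (\cite[Prop.\ 8.2(1)]{KMP20}) together with the general fact that Weihrauch reducibility implies arithmetic Weihrauch reducibility.

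For the strictness, the plan is to argue by contradiction using \thref{lem:arit_reduction_implies_normal_reduction}. Suppose $\TCBaire\weiarithreducible\CBaire$. To apply the lemma with $g=\CBaire$ and $f=\TCBaire$, I need to verify the two hypotheses on $\CBaire$: first, that $\CBaire$ computes every arithmetic function, which follows from $\mflim^{(n)}\weireducible\UCBaire\weireducible\CBaire$ for every $n\in\mathbb{N}$; second, that $\CBaire$ is closed under compositional product, as already recorded in Section \ref{subsec:wei} (\cite[Thm.\ 7.3]{BdBPLow12}). The lemma then yields $\TCBaire\weireducible\CBaire$, directly contradicting $\CBaire\strictlyweireducible\TCBaire$.

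The argument is essentially routine given the machinery already developed: the only real content is the observation that $\CBaire$ satisfies the hypotheses of \thref{lem:arit_reduction_implies_normal_reduction}, so the separation at the arithmetic level reduces to the known separation at the computable level. There is no substantial obstacle to carry out.
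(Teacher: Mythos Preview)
Your proof is correct and follows essentially the same approach as the paper: both derive the reduction from $\CBaire\weireducible\TCBaire$ and obtain strictness by applying \thref{lem:arit_reduction_implies_normal_reduction} with $g=\CBaire$, verifying the hypotheses via $\mflim^{(n)}\weireducible\UCBaire\weireducible\CBaire$ and closure of $\CBaire$ under compositional product, to reduce to the known fact $\TCBaire\not\weireducible\CBaire$.
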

\begin{proof}
The fact that $\CBaire\weiarithreducible \TCBaire$ is trivial as $\CBaire\weireducible\TCBaire$. The separation follows from \thref{lem:arit_reduction_implies_normal_reduction} (recall that, for every $n$, $\mflim^{(n)}\weireducible \UCBaire$, see \cite[Prop.\ 7.50]{BGP17}) and the fact that $\TCBaire\not\weireducible\CBaire$.
\end{proof}

\begin{theorem}
	$\TCBaire\weiarithequiv \sTCBaire$.
\end{theorem}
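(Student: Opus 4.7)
The plan is to prove the two reductions separately; the easy direction is already almost computable, and the harder direction reduces to checking an arithmetic predicate.

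First, for $\TCBaire \weiarithreducible \sTCBaire$ I would actually establish the stronger $\TCBaire \weireducible \sTCBaire$. Given a closed set presented as a tree $T$, pass $T$ unchanged to $\sTCBaire$ and let the output be $\coding{b}\concat x$. The backward functional simply returns $x$. If $b=1$, then $[T]\neq\emptyset$ and $x$ is a path, so $x \in \TCBaire(T)$. If $b=0$, then $[T]=\emptyset$, and by totality of $\TCBaire$ any element of $\Baire$ is a valid answer, so $x$ works. This gives $\TCBaire \weireducible \sTCBaire$, whence $\TCBaire \weiarithreducible \sTCBaire$.

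For the reverse reduction $\sTCBaire \weiarithreducible \TCBaire$ the key observation is: for any tree $T$ and any $x \in \TCBaire(T)$, we have $x \in [T]$ \emph{if and only if} $[T]\neq\emptyset$. Indeed, if $[T]\neq\emptyset$ then $\TCBaire(T) \subseteq [T]$ by definition, and the converse is trivial. Thus one can detect whether $T$ is ill-founded by testing whether the witness returned by $\TCBaire$ actually lies on $T$. Concretely, I take the forward functional $\Phi$ to be the identity on the name of $T$, and apply $\TCBaire$ to obtain $x$. The backward functional $\Psi$, given $(T,x)$, asks the question
\[ (\forall n)(x[n]\in T), \]
which is a $\Pi^{0,(T,x)}_1$ predicate and hence decidable using $\LPO$ (equivalently, using $\mflim$). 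If the answer is ``yes'' return $\coding{1}\concat x$, otherwise return $\coding{0}\concat 0^\omega$. In the first case $x \in [T]$ is genuinely a path, so $\coding{1}\concat x \in \sTCBaire(T)$; in the second case $[T]=\emptyset$ by the observation above, so $\coding{0}\concat 0^\omega \in \sTCBaire(T)$. Since $\LPO \weireducible \mflim$, the map $\Psi$ is arithmetic, which yields $\sTCBaire \weiarithreducible \TCBaire$.

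There is no real obstacle here: the entire argument rests on the elementary fact that $\TCBaire$ can only ``lie'' when the tree is well-founded, and that the discrepancy between a lie and a genuine path is witnessed by a $\Pi^0_1$ condition which arithmetic Weihrauch reducibility is permitted to evaluate. The only mild subtlety is ensuring the arithmeticity of the backward functional, which follows because $\LPO \weireducible \mflim$ and hence the branch-selecting $\Psi$ is computable relative to $\mflim$.
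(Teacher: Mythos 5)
Your argument is correct and is essentially the paper's proof unfolded: the paper writes $\TCBaire \weireducible \sTCBaire \weireducible \LPO\compproduct\TCBaire$ and then absorbs the $\LPO$ using \thref{thm:f_arith<_id} and \thref{prop:compprod_arith_reduction}, while you directly build the arithmetic backward functional that asks the $\Pi^0_1$ question $(\forall n)(x[n]\in T)$ to detect whether the $\TCBaire$-witness is a genuine path. The key observation (the witness lies on $T$ iff $[T]\neq\emptyset$) is exactly the content of $\sTCBaire\weireducible\LPO\compproduct\TCBaire$, so there is nothing to add.
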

\begin{proof}
	This follows from \thref{thm:f_arith<_id}, \thref{prop:compprod_arith_reduction} and the fact that 
	\begin{equation*}
	\TCBaire \weireducible \sTCBaire \weireducible \LPO\compproduct\TCBaire.\qedhere
	\end{equation*}
\end{proof}

We will now prove the fact that $\TCBaire\strictlyweiarithreducible\openRamsey$. To do so we will first need some additional results about compositional products of iterations of $\mflim$ and $\TCBaire$.

\begin{lemma}
	\thlabel{thm:pi11_predicate}
	Let $D(X,Y, Z)$ be an arithmetic predicate with free variables among $X,Y,Z$ and let $\Phi\colon\Baire\times\Baire\to \repTree$ be computable. Define the $\Pi^1_1$ predicate $P(X, Y, Z)$ as
	\[ D(X,Y, Z) \land ([\Phi(X,Y)]\neq\emptyset \rightarrow Z\in[\Phi(X,Y)]). \]
	There exists a $\Pi^0_1$ predicate $S(X, Y, Z, W)$ s.t.\ an index for $S$ is computable from indices for $D$ and $\Phi$ s.t. 
	\begin{gather*}
		(\exists W)( S(X, Y,Z,W) )\Rightarrow D(X,Y,Z) , \\
		[\Phi(X,Y)]\neq \emptyset \Rightarrow (~ P(X,Y,Z) \iff (\exists W)(S(X, Y,Z,W))~).
	\end{gather*}
\end{lemma}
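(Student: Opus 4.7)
The plan is to exploit that every arithmetic predicate is $\Sigma^1_1$, so by (uniform) Kleene normal form it can be rewritten as a single existential function quantifier followed by a $\Pi^0_1$ matrix, and then to absorb the $\Pi^0_1$ path condition into that matrix. Concretely, from an arithmetic index for $D$ I will effectively compute a $\Pi^0_1$ predicate $R(X,Y,Z,W)$ such that
\[ D(X,Y,Z) \iff (\exists W)\, R(X,Y,Z,W). \]
This step is standard: iteratively absorb unbounded number quantifiers into one function quantifier via pairing, and then use the fact that a $\Sigma^0_1$ matrix can be turned into a $\Pi^0_1$ matrix after adjoining another function quantifier (Skolemization of the innermost existential). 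Similarly, since $\Phi\colon\Baire\times\Baire\to\repTree$ is computable, the relation ``$Z\in[\Phi(X,Y)]$'' is $\Pi^0_1$ uniformly in $(X,Y,Z)$, as it is equivalent to $(\forall n)\,(Z[n]\in\Phi(X,Y))$, and an index for it is computable from an index for $\Phi$.

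Having these two ingredients, I will simply set
\[ S(X,Y,Z,W) \;:=\; R(X,Y,Z,W) \wedge Z\in[\Phi(X,Y)]. \]
The conjunction of two $\Pi^0_1$ predicates is $\Pi^0_1$, and its index is effectively obtainable from the indices for $D$ and $\Phi$, as required.

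To verify the two clauses of the conclusion, observe first that if some $W$ satisfies $S(X,Y,Z,W)$, then in particular $R(X,Y,Z,W)$ holds, so $(\exists W)\, R(X,Y,Z,W)$, i.e.\ $D(X,Y,Z)$ holds; this gives the first implication. Second, under the hypothesis $[\Phi(X,Y)]\neq\emptyset$, the antecedent of the implication inside $P(X,Y,Z)$ is always true, so $P(X,Y,Z)$ simplifies to $D(X,Y,Z)\wedge Z\in[\Phi(X,Y)]$, and hence
\[ P(X,Y,Z) \iff (\exists W)\,R(X,Y,Z,W)\wedge Z\in[\Phi(X,Y)] \iff (\exists W)\, S(X,Y,Z,W), \]
where the second equivalence holds because the clause $Z\in[\Phi(X,Y)]$ is $W$-free and can be pushed inside the existential. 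There is no real obstacle here; the only point requiring some care is the effectiveness and uniformity of the Kleene normal form conversion applied to the arithmetic predicate $D$, which is entirely standard.
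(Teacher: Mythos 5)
Your proposal is correct and follows essentially the same route as the paper: apply Kleene's normal form theorem to write the arithmetic predicate $D$ as $(\exists W)$ of a $\Pi^0_1$ matrix, conjoin the $\Pi^0_1$ condition $Z\in[\Phi(X,Y)]$ to that matrix to obtain $S$, and verify the two clauses exactly as you do. The extra detail you give on the uniformity of the normal-form conversion is fine but not a departure from the paper's argument.
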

\begin{proof}
	By Kleene's normal form theorem (see e.g.\ \cite[Thm.\ 16.IV]{Rogers67}), there is a $\Pi^0_1$ predicate $T$ s.t.\
	\[ D(X,Y,Z) \iff (\exists W)(T(X,Y,Z,W)). \]
	Define the $\Pi^0_1$ predicate $S(X,Y,Z,W):= T(X,Y,Z,W) \land Z\in [\Phi(X,Y)]$. It follows from Kleene's normal form theorem that an index for $S$ is computable from indices for $D$ and $\Phi$. The first property of $S$ is immediate. For the second notice that, if $[\Phi(X,Y)]\neq \emptyset$ then
	\begin{align*}
		P(X,Y,Z)  & \iff D(X,Y,Z) \land Z\in [\Phi(X,Y)]\\ 
			&  \iff (\exists W)(T(X,Y,Z,W) \land Z\in [\Phi(X,Y)]) \\
			&  \iff (\exists W)(S(X,Y,Z,W)).\qedhere
	\end{align*}
\end{proof}

The previous lemma can be interpreted as follows: the predicate $P$ describes the compositional product (on both sides) of $\TCBaire$ with an arithmetic problem $f$, while $D$ says that $Y$ is a solution for $f(X,Z)$. Notice that, if we are considering the composition $\TCBaire\compproduct f$ then $f$ (and therefore $D$) will not depend on the output $Z$ of $\TCBaire$. On the other hand, if we consider $f\compproduct \TCBaire$ then we need to keep track of $Z$. The lemma proves that there is a uniform way to build a tree (whose body is the set of solutions to $S$) s.t., by projecting its paths, we can obtain the solutions to the original problem $P$. Notice however that the lemma does not guarantee such a tree to be ill-founded. In other words, we can recover (some) solutions to the original problem only if the tree is ill-founded.

Obviously, if $D$ depends only on $X,Y$ and not on $Z$, then a solution for $D$ can be (arithmetically) computed without first finding a path through the tree $\Phi(X,Y)$. 

\begin{lemma}
	\thlabel{thm:tcbaire*lim=tcbaire_x_lim}
	For every $n\in\mathbb{N}$, $\TCBaire\compproduct \mflim^{(n)} \weiequiv \TCBaire\times \mflim^{(n)}.$
\end{lemma}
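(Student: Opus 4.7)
The plan is to establish the two directions $\TCBaire \times \mflim^{(n)} \weireducible \TCBaire \compproduct \mflim^{(n)}$ and $\TCBaire \compproduct \mflim^{(n)} \weireducible \TCBaire \times \mflim^{(n)}$ separately. The first reduction is essentially immediate from the definition of the compositional product: by the cylindrical decomposition, a representative of $\TCBaire \compproduct \mflim^{(n)}$ is $(\id{} \times \TCBaire) \circ \Phi_e \circ (\id{} \times \mflim^{(n)})$ for some computable $\Phi_e$. Given input $(T,p)$ for $\TCBaire\times\mflim^{(n)}$, I would encode it so that $\mflim^{(n)}$ processes $p$ while the middle computable map passes the tree $T$ through to $\TCBaire$ (ignoring the output $q$ of $\mflim^{(n)}$), and bundles $q$ into the remembered component. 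The output of this composition then contains both $\TCBaire(T)$ and $q$, which is exactly what $\TCBaire\times\mflim^{(n)}(T,p)$ is supposed to produce.

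For the harder direction, again using cylindrical decomposition we may work with a fixed representative $(\id{} \times \TCBaire) \circ \Phi_e \circ (\id{} \times \mflim^{(n)})$. Given input $(p_1,p_2)$, let $q := \mflim^{(n)}(p_2)$ and let $T$ be the tree component of $\Phi_e(p_1,q)$. The heart of the argument is to produce, computably from $(p_1,p_2)$ alone (without first applying $\mflim^{(n)}$), a tree $T^{*}$ such that $[T^{*}]\neq\emptyset$ iff $[T]\neq\emptyset$, and such that a path through $T^{*}$ uniformly computes a path through $T$. Since $q$ is $\Delta^{0}_{n+1}$ in $p_2$, the predicate ``$\sigma\in T$'' is arithmetic in $(p_1,p_2,\sigma)$; hence ``$T$ is ill-founded'' $=\exists x\,\forall m (x[m]\in T)$ is $\Sigma^{1}_{1}$ in $(p_1,p_2)$. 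By Kleene's normal form (relativized to $(p_1,p_2)$), this is equivalent to $\exists y\,\forall k\, R(y[k],p_{1}[k],p_{2}[k])$ for a computable $R$, and the tree $T^{*}:=\{\tau \st R(\tau,p_{1}[\abslength{\tau}],p_{2}[\abslength{\tau}])\}$ has the desired properties. This is essentially the Skolemization technique of \thref{thm:pi11_predicate}, suitably adapted to track an arithmetic parameter coming from $\mflim^{(n)}(p_{2})$ rather than a directly computable one.

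With $T^{*}$ in hand, the reduction runs $\TCBaire$ on $T^{*}$ and $\mflim^{(n)}$ on $p_{2}$ in parallel, producing $y$ and $q$. If $T$ is ill-founded, so is $T^{*}$, and $y$ projects to a path $x$ through $T$; combining with $q$ and $p_{1}$, we recompute the ``bookkeeping'' output of $\Phi_{e}(p_{1},q)$ and return the full desired output. If $T$ is well-founded, both sides of the reduction permit arbitrary outputs, so whatever $\TCBaire(T^{*})$ returns is acceptable. The main obstacle will be keeping the construction of $T^{*}$ genuinely $(p_{1},p_{2})$-computable: the Kleene normal form must be invoked only with the oracles $(p_{1},p_{2})$ (not $q$), and the existential witness $y$ encoded by a path through $T^{*}$ must admit a computable projection onto an actual path through $T$. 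Once that bookkeeping is in place, the rest is a routine application of the cylindrical decomposition and the closure of the relevant classes under computable reductions.
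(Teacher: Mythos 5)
Your proposal is correct and follows essentially the same route as the paper: cylindrical decomposition, then Skolemization of the $\Sigma^1_1$ statement ``the tree produced by $\Phi_e(p_1,\mflim^{(n)}(p_2))$ is ill-founded'' into a tree computable from $(p_1,p_2)$ alone (exploiting that $\mflim^{(n)}(p_2)$ is arithmetically definable from $p_2$, which is exactly what \thref{thm:pi11_predicate} packages), followed by running $\TCBaire$ on that tree and $\mflim^{(n)}$ on $p_2$ in parallel and handling the well-founded case by totality. The only cosmetic difference is that you re-derive the easy direction by hand where the paper cites the general algebraic rules for $\compproduct$ and $\times$.
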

\begin{proof}
	Fix $n\in\mathbb{N}$. The reduction $\TCBaire\times \mflim^{(n)}\weireducible \TCBaire\compproduct\mflim^{(n)}$ trivially follows from the algebraic rules of the operations (see \cite[Prop.\ 4.4]{BP16}). 
	
	To prove the converse reduction, by the cylindrical decomposition we can write 
	\[ \TCBaire\compproduct\mflim^{(n)}\weiequiv (\id{} \times \TCBaire)\circ \Phi_e \circ \mflim^{(n)}, \]
	for some computable function $\Phi_e$. In particular
	\[ (\id{} \times \TCBaire)\circ \Phi_e \circ \mflim^{(n)}(p) = \coding{\Phi_1(\mflim^{(n)}(p) ), \TCBaire \Phi_2(\mflim^{(n)}(p) )} \]
	where $\Phi_1,\Phi_2$ are the computable functions s.t.\ $\Phi_e(p)=\coding{\Phi_1(p),\Phi_2(p)}$. 
	
	Let $D(X,Y)$ be the predicate that says
	\[ Y = \coding{\Phi_1(\mflim^{(n)}(X) ),  \Phi_2(\mflim^{(n)}(X) )}. \]
	Notice that an index for $D$ can be (uniformly) computed from an index of $\Phi_e$. Define also the predicate $P(X,Y,Z)$ as
	\[ \quad D(X,Y) \land ([\pi_2 (Y)]\neq\emptyset\rightarrow Z\in [\pi_2 (Y)]), \]
	where $\pi_2:=\coding{Y_1,Y_2}\mapsto Y_2$. Since $D(X,Y)$ is arithmetic, we can use \thref{thm:pi11_predicate} to define a computable tree $S$ s.t.\ 
	\begin{gather*}
		(\exists W)( (X, Y,Z,W)\in[S] )\Rightarrow D(X,Y) , \\
		[\pi_2(Y)]\neq \emptyset \Rightarrow (~ P(X,Y,Z) \iff (\exists W)((X, Y,Z,W)\in [S] )~).
	\end{gather*}

	For every fixed $p\in \dom((\id{} \times \TCBaire)\circ \Phi_e \circ \mflim^{(n)})$ we define $S_p:=\{ \sigma \st \coding{p[\abslength{\sigma}],\sigma}\in S \}$. We now claim that, from an answer to $(\TCBaire\times\mflim^{(n)})([S_p],p)$ we can compute a solution to $(\id{} \times \TCBaire)\circ \Phi_e \circ \mflim^{(n)}(p)$. 
	
	Indeed $\Phi_1\circ \mflim^{(n)}(p)$ is trivially uniformly computed from $\mflim^{(n)}(p)$. On the other hand, there is a unique $q$ s.t.\ $D(p,q)$. Assume $[\pi_2(q)]\neq\emptyset$ and let $z_0 \in[\pi_2(q)]$. Since $P(p,q,z_0)$ holds, we have that $(\exists w)(\coding{q,z_0,w}\in [S_p])$, in particular $[S_p]\neq\emptyset$. Let $\coding{y,z,w}\in\TCBaire([S_p])$. Notice that, since $\mflim^{(n)}$ is single-valued, we have $y=q$. Hence we can conclude that $P(p,q,z)$ holds, and therefore, by projecting $\coding{y,z,w}$ on the second component we obtain a path through $[\pi_2(q)]$. If, on the other hand, $[\pi_2(q)]=\emptyset$, then any $z$ belongs to $\TCBaire([\pi_2(q)])$. In both cases, by projecting the output of $\TCBaire([S_p])$ we can compute a solution to $(\TCBaire\circ \Phi_2\circ \mflim^{(n)})(p)$ and this concludes the proof.
\end{proof}

\begin{lemma}
	\thlabel{thm:lim^n*tcbaire_<_lpo*tcbaire_x_lim^k}
	For every $n\in\mathbb{N}$, $\mflim^{(n)}\compproduct\TCBaire \weireducible \sTCBaire \times \mflim^{(3n+5)}.$
\end{lemma}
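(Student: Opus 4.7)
The plan is to begin with a cylindrical decomposition, writing
\[ \mflim^{(n)}\compproduct\TCBaire \weiequiv (\id{}\times \mflim^{(n)})\circ \Phi_e \circ (\id{}\times \TCBaire) \]
for a suitable computable $\Phi_e(p_1,q)=\coding{\Phi_1(p_1,q),\Phi_2(p_1,q)}$. On input $\coding{p_1,p_2}$, where $p_2$ codes a tree $T$, we must therefore return $\coding{\Phi_1(p_1,x),\mflim^{(n)}(\Phi_2(p_1,x))}$ for some $x\in\TCBaire(T)$. The key idea is to reduce the two-sided nature of $\TCBaire$ to the one bit of information carried by $\sTCBaire$: if $T$ is ill-founded, $\sTCBaire$ locates a path together with the matching $\mflim^{(n)}$-answer, while if $T$ is well-founded, $\mflim^{(3n+5)}$ supplies a default answer computed arithmetically from $p_1$ alone.

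For the ill-founded case I would apply \thref{thm:pi11_predicate} with the arithmetic predicate $D(\coding{p_1,p_2},y,x):=(y=\mflim^{(n)}(\Phi_2(p_1,x)))$, which is $\Delta^0_{n+1}$, paired with $\Phi(\coding{p_1,p_2},y):=T$ (independent of $y$). This yields a tree $S$, uniformly computable in $\coding{p_1,p_2}$, whose body consists of triples $(y,x,w)$ such that $x\in[T]$ and $y=\mflim^{(n)}(\Phi_2(p_1,x))$. Whenever $(p_1,p_2)$ lies in the domain of the compositional product and $[T]\neq\emptyset$, some path $x\in[T]$ satisfies $\Phi_2(p_1,x)\in\dom(\mflim^{(n)})$, so $[S]\neq\emptyset$; conversely, the proof of \thref{thm:pi11_predicate} makes the conjunct $x\in[\Phi(\ldots)]$ part of the $\Pi^0_1$ predicate defining $S$, so if $[T]=\emptyset$ then $[S]=\emptyset$ as well. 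Applying $\sTCBaire$ to $S$ therefore returns a pair $(b,\sigma)$ with $b=1$ iff $[T]\neq\emptyset$, and when $b=1$ we read a path $x$ and the desired $y$ directly out of $\sigma$ and output $\coding{\Phi_1(p_1,x),y}$.

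For the well-founded case ($b=0$), the fact that $(p_1,p_2)$ is in the domain of the compositional product, together with $\TCBaire(T)=\Baire$, forces $\Phi_2(p_1,x)\in\dom(\mflim^{(n)})$ for every $x\in\Baire$; in particular $\mflim^{(n)}(\Phi_2(p_1,0^\omega))$ is defined and is $\Delta^0_{n+1}$ in $p_1$. In parallel with $\sTCBaire$, I would construct from $\coding{p_1,p_2}$ alone an input $r$ for $\mflim^{(3n+5)}$ that always lies in $\dom(\mflim^{(3n+5)})$ (guarding the main computation by first testing $\Phi_2(p_1,0^\omega)\in\dom(\mflim^{(n)})$, which is itself arithmetic in $p_1$, and returning a default string if the test fails) and satisfies $\mflim^{(3n+5)}(r)=\mflim^{(n)}(\Phi_2(p_1,0^\omega))$ whenever that value is defined. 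When $b=0$ we then output $\coding{\Phi_1(p_1,0^\omega),\mflim^{(3n+5)}(r)}$.

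The main technical obstacle is ensuring that the parallel $\mflim^{(3n+5)}$-branch is genuinely total and computes what we need. This reduces to bookkeeping the quantifier complexity of $\dom(\mflim^{(n)})$ and of the iterated limit $\mflim^{(n)}$ itself, both of which lie at a finite level $\Pi^0_{O(n)}$ of the arithmetic hierarchy; the bound $3n+5$ is not tight but is comfortable enough to absorb the extra jumps needed to test domain membership, to search arithmetically for $\Phi_2(p_1,0^\omega)$'s image, and to guard against divergence. Once this count is carried out, the forward and backward functionals $\Phi,\Psi$ realizing the reduction are read off directly from the two-branch construction described above.
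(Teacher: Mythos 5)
Your proposal is correct and follows essentially the same route as the paper's proof: cylindrical decomposition, an application of \thref{thm:pi11_predicate} to package the graph of $\mflim^{(n)}\circ\Phi_e$ together with the path condition into a tree fed to $\sTCBaire$, and a parallel arithmetic branch (the paper's $F:=\mathsf{T}(\mflim^{(n)}\circ\Phi_e)$ evaluated at $(p_1,0^\omega)$) handling the well-founded case. The only piece left implicit is the quantifier count showing this totalized branch reduces to $\mflim^{(3n+5)}$ (and note your passing claim that $D$ is $\Delta^0_{n+1}$ is off, though immaterial since the lemma only needs $D$ arithmetic), but this is exactly the bookkeeping the paper carries out.
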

\begin{proof}
    Fix $n\in\mathbb{N}$. By the cylindrical decomposition we can write 
	\[ \mflim^{(n)}\compproduct\TCBaire\weiequiv \mflim^{(n)}\circ \Phi_e \circ (\id{} \times\TCBaire) \]
	for some computable function $\Phi_e$. 
	
	Let us define $F\mfunction{\Baire\times\Baire}{\Baire}$ as $F:= \mathsf{T}(\mflim^{(n)}\circ \Phi_e)$. Recalling that $\mflim^{(n)}=\mflim^{[n+1]}$, it is immediate that being in the domain of $\mflim^{(n)}$ is a $\Pi^0_{2n+3}$ property. On the other hand, whether $\Phi_e(p,q)$ is defined is a $\Pi^0_2$ property. This implies that $F\weireducible \mflim^{[2n+5+n+1]}=\mflim^{(3n+5)}$ and hence to prove the lemma it suffices to show that $ \mflim^{(n)}\circ \Phi_e \circ (\id{} \times\TCBaire)\weireducible \sTCBaire \times F$.

	Let $D(X,Y,Z)$ be the arithmetic predicate
	\[ Y = \mflim^{(n)}\circ \,\Phi_e(\pi_1(X), Z). \]
	Clearly an index for $D$ is computable from an index of $\Phi_e$. Let also $P(X,Y,Z)$ be the predicate
	\[ D(X,Y,Z) \land ([\pi_2(X)]\neq\emptyset \rightarrow Z\in [\pi_2(X)]). \]

	Since $D(X,Y,Z)$ is arithmetic, we can use \thref{thm:pi11_predicate} to define a computable tree $S$ s.t.\ 
	\begin{gather*}
		(\exists W)( (X, Y,Z,W)\in[S] )\Rightarrow D(X,Y,Z) , \\
		[\pi_2(X)]\neq \emptyset \Rightarrow (~ P(X,Y,Z) \iff (\exists W)((X, Y,Z,W)\in [S] )~).
	\end{gather*}

	For every fixed $p=\coding{p_1,p_2}\in \dom(\mflim^{(n)}\circ \Phi_e \circ (\id{} \times\TCBaire))$ we define $S_p:=\{ \sigma \st \coding{p[\abslength{\sigma}],\sigma}\in S \}$.
	We define the forward Weihrauch functional as the map $\Phi:=\coding{p_1,p_2} \mapsto ([S_p], (p_1,0^\omega))$. Notice that, since $F$ is total, $\Phi(\coding{p_1,p_2})$ is a correct input for $\sTCBaire\times F$. Let $(\coding{b}\concat \coding{y,z,w},r)\in (\sTCBaire\times F)([S_p], (p_1,0^\omega))$. We claim that a solution for $\mflim^{(n)}\,\Phi_e (p_1,\TCBaire(p_2))$ is $y$ if $b=1$ or is $r$ if $b=0$.
    
    Assume that $b=1$, i.e.\ that $\coding{y,z,w}\in [S_p]$. Then $D(\coding{p_1,p_2},y,z)$ holds, i.e.\ $y=\mflim^{(n)}(\Phi_e(p_1,z))$. Therefore it is enough to show that $z\in\TCBaire([p_2])$. Assume that $[p_2]\neq\emptyset$ (the other case is trivial). Since $\coding{y,z,w}\in [S_p]$, we have that $P(\coding{p_1,p_2},y,z)$ holds and therefore $z\in [p_2]$.
    
    Assume now that $b=0$, i.e.\ for all $y,z$ there is no $w$ s.t.\ $\coding{y,z,w}\in[S_p]$. If $[p_2]\neq\emptyset$ then choose $z\in [p_2]$ and let $y=\mflim^{(n)}\Phi_e(p_1,z)$. We then have that $D(\coding{p_1,p_2},y,z)$ and $P(\coding{p_1,p_2},y,z)$ hold. Therefore there exists $w$ s.t.\ $\coding{y,z,w}\in[S_p]$, which is a contradiction. This implies that $[p_2]=\emptyset$ and therefore $0^\omega \in\TCBaire([p_2])$ and $(p_1,0^\omega)\in\dom(\mflim^{(n)}\Phi_e)$. Therefore $r=F(p_1,0^\omega)\in \mflim^{(n)}\Phi_e(p_1,\TCBaire([p_2]))$ and this concludes the proof.
\end{proof}
We are now ready to prove the following characterization of arithmetic reducibility to $\TCBaire$, conjectured by Arno Pauly during the BIRS-CMO 2019 workshop ``Reverse Mathematics of Combinatorial Principles".
\begin{theorem}
	\thlabel{thm:arith_reduction_tcbaire}
	For every multivalued function $f$, 
	\[ f\weiarithreducible \TCBaire \iff (\exists n)(f\weireducible \sTCBaire\times \mflim^{(n)}). \]
\end{theorem}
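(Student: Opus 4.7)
The plan is to prove both directions, with the forward implication $(\Rightarrow)$ being a careful adaptation of the proof of \thref{thm:lim^n*tcbaire_<_lpo*tcbaire_x_lim^k}.

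For $(\Leftarrow)$, assume $f\weireducible\sTCBaire\times\mflim^{(n)}$. The equivalences $\sTCBaire\weiarithequiv\TCBaire$ (proved just above) and $\mflim^{(n)}\weiarithequiv\id{}$ (\thref{prop:ccantor_arith_equiv_id}) yield some $m$ such that $\sTCBaire\weireducible\mflim^{(m)}\compproduct\TCBaire\compproduct\mflim^{(m)}$. Combining this with standard identities such as $\mflim^{(a)}\compproduct\mflim^{(b)}\weireducible\mflim^{(a+b+1)}$, we obtain $\sTCBaire\times\mflim^{(n)}\weireducible\mflim^{(k)}\compproduct\TCBaire\compproduct\mflim^{(k)}$ for some $k$, so $f\weiarithreducible\TCBaire$.

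For $(\Rightarrow)$, assume $f\weiarithreducible\TCBaire$, so $f\weireducible\mflim^{(n)}\compproduct\TCBaire\compproduct\mflim^{(n)}$ for some $n$. By \thref{thm:tcbaire*lim=tcbaire_x_lim} this rewrites as $\mflim^{(n)}\compproduct(\TCBaire\times\mflim^{(n)})$, so it suffices to show $\mflim^{(n)}\compproduct(\TCBaire\times\mflim^{(n)})\weireducible\sTCBaire\times\mflim^{(k)}$ for some $k$. By the cylindrical decomposition the problem becomes $\mflim^{(n)}\circ\Phi_e\circ(\id{}\times(\TCBaire\times\mflim^{(n)}))$ for some computable $\Phi_e$; an instance is $X=\coding{p,T,q}$, and the desired output is $\mflim^{(n)}(\Phi_e(p,x,y))$ with $x\in\TCBaire(T)$ and $y=\mflim^{(n)}(q)$. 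I introduce the arithmetic predicate $D(X,Y,Z)$ stating $Y = \mflim^{(n)}\Phi_e(p,Z,\mflim^{(n)}(q))$, and $P(X,Y,Z) := D(X,Y,Z)\land([T]\neq\emptyset\rightarrow Z\in[T])$. Since $T$ is computably extractable from $X$, $P$ has the form handled by \thref{thm:pi11_predicate}, which produces a computable tree $S$; define the section $S_X := \{\sigma: \coding{X[\abslength{\sigma}],\sigma}\in S\}$ as in the proof of \thref{thm:lim^n*tcbaire_<_lpo*tcbaire_x_lim^k}.

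The reduction then feeds $[S_X]$ to $\sTCBaire$ and, in parallel, computes the arithmetic total function $F(X) := \mathsf{T}(\mflim^{(n)}\circ\Phi_e(p,0^\omega,\mflim^{(n)}(q)))$, which reduces to some $\mflim^{(k)}$ by an arithmetic complexity count analogous to the one in \thref{thm:lim^n*tcbaire_<_lpo*tcbaire_x_lim^k}. If $\sTCBaire$ returns $(1,\coding{y,z,w})$, then by the second property in \thref{thm:pi11_predicate} we have $D(X,y,z)$, and whenever $[T]\neq\emptyset$ we also get $z\in[T]$; thus $y$ is a valid output. If $\sTCBaire$ returns the bit $0$, the tree $[S_X]$ is empty, which by \thref{thm:pi11_predicate} forces $[T]=\emptyset$ (otherwise a path through $[S_X]$ could be exhibited exactly as in the proof of \thref{thm:lim^n*tcbaire_<_lpo*tcbaire_x_lim^k}); then $0^\omega\in\TCBaire(T)$, so $F(X)$ is a valid output. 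The main obstacle is verifying that the additional $\mflim^{(n)}(q)$-parameter nested inside $\Phi_e$ does not disrupt the application of \thref{thm:pi11_predicate}; this works precisely because that parameter is absorbed entirely into the arithmetic predicate $D$, leaving the structural form required by \thref{thm:pi11_predicate} intact.
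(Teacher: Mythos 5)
Your proof is correct, and the backward direction is essentially the paper's. The forward direction, however, is organized quite differently. The paper, after reducing to $\mflim^{(m)}\compproduct\TCBaire\compproduct\mflim^{(m)}$, proceeds by a purely algebraic chain: it applies \thref{thm:lim^n*tcbaire_<_lpo*tcbaire_x_lim^k} \emph{twice} as a black box, interleaved with the distributivity $(g\times h)\compproduct k\weireducible(g\compproduct k)\times(h\compproduct k)$ for single-valued $k$, the identity $\sTCBaire\weireducible\LPO\compproduct\TCBaire$, and \thref{thm:tcbaire*lim=tcbaire_x_lim} to commute $\TCBaire$ past the limits; this forces it to track explicit exponents ($3m+6$, $4m+7$, etc.) but never reopens the tree construction. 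You instead use \thref{thm:tcbaire*lim=tcbaire_x_lim} once up front to collapse the right-hand $\mflim^{(n)}$ into a parallel factor, and then redo the proof of \thref{thm:lim^n*tcbaire_<_lpo*tcbaire_x_lim^k} with the extra $\mflim^{(n)}(q)$-parameter absorbed into the arithmetic predicate $D$ fed to \thref{thm:pi11_predicate}. This is legitimate: $D$ stays arithmetic, the tree-extraction map is computable from $X$ alone, the domain conditions needed in the $b=0$ case ($\Phi_e(p,z,\mflim^{(n)}(q))\in\dom(\mflim^{(n)})$ for $z\in\TCBaire(T)$) follow from $X$ being in the domain of the cylindrical representative, and the totalized fallback $F$ is arithmetic by the same complexity count as in the lemma. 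What your route buys is a single application of the $\Pi^1_1$-uniformization machinery and no need for the distributivity step $(\star)$ or exponent bookkeeping; what it costs is that \thref{thm:lim^n*tcbaire_<_lpo*tcbaire_x_lim^k} can no longer be cited verbatim and its proof must be re-verified with the additional parameter, which you correctly identify as the only real obstacle.
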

\begin{proof}
	The right-to-left implication follows from \thref{prop:ccantor_arith_equiv_id} and \thref{prop:compprod_arith_reduction}, as $\sTCBaire \weireducible \LPO\compproduct\TCBaire$.

	To prove the left-to-right implication, assume that there exists $m$ s.t.\ $f\weireducible \mflim^{(m)}\compproduct\TCBaire\compproduct\mflim^{(m)}$. Notice that for every single-valued $k$ and every $g,h$ we have 
	\begin{equation}\tag{$\star$}
		(g\times h)\compproduct k \weireducible (g\compproduct k) \times (h\compproduct k).
	\end{equation}
	This fails for multivalued $k$, as shown in \cite[Prop.\ 4.9(19)]{BP16}. We can therefore write
	\begin{align*}
		\mflim^{(m)}\compproduct\TCBaire\compproduct\mflim^{(m)} & \weireducible (\sTCBaire\times \mflim^{[3m+6]})\compproduct\mflim^{[m+1]} &  \text{\thref{thm:lim^n*tcbaire_<_lpo*tcbaire_x_lim^k}} \\
			& \weireducible (\sTCBaire\compproduct\mflim^{[m+1]}) \times \mflim^{[4m+7]} & (\star)\\
			& \weireducible (\LPO\compproduct\TCBaire\compproduct\mflim^{[m+1]}) \times \mflim^{[4m+7]} & \\
			& \weiequiv (\LPO\compproduct(\TCBaire \times \mflim^{[m+1]}))\times \mflim^{[4m+7]} & \text{\thref{thm:tcbaire*lim=tcbaire_x_lim}} \\
			& \weireducible (\LPO\compproduct\mflim^{[m+1]}\compproduct\TCBaire) \times \mflim^{[4m+7]} & \\
			& \weireducible (\mflim^{[m+2]}\compproduct\TCBaire) \times \mflim^{[4m+7]} &  \\
			& \weireducible \sTCBaire \times\mflim^{[3m+9]}\times \mflim^{[4m+7]}  &  \text{\thref{thm:lim^n*tcbaire_<_lpo*tcbaire_x_lim^k}} \\
			& \weiequiv \sTCBaire \times\mflim^{[n]}, &
	\end{align*}
	where $n=\max\{3m+9, 4m+7\}$.
\end{proof}

\begin{corollary}
	\thlabel{thm:tcbaire_arit_<_openRT}
	$\TCBaire\strictlyweiarithreducible \openRamsey$.
\end{corollary}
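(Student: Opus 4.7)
The claim splits naturally into the positive reduction $\TCBaire \weiarithreducible \openRamsey$ and the strictness $\openRamsey \not\weiarithreducible \TCBaire$.

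For the positive direction, the plan is to chain \thref{thm:sTCBaire<=sTCCantor*openRT}, which yields $\sTCBaire \weireducible \sTCCantor \compproduct \openRamsey$, with the trivial reduction $\TCBaire \weireducible \sTCBaire$, so that the task reduces to verifying $\sTCCantor \compproduct \openRamsey \weiarithequiv \openRamsey$. By \thref{prop:compprod_arith_reduction}, this equivalence follows once we know $\sTCCantor \weiarithreducible \id{}$. To justify the latter, I would argue as follows: given a binary tree $T$ coding an input for $\sTCCantor$, K\"onig's lemma turns the question ``$[T]=\emptyset$?'' into the $\Sigma^0_1$ question ``is $T$ finite?'', which can be answered with one call to $\LPO$; depending on the answer, either output $0^\omega$ or invoke $\CCantor$ on $T$ to produce a path. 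Hence $\sTCCantor \weireducible \CCantor \compproduct \LPO$, and since both $\CCantor$ and $\LPO$ are arithmetically equivalent to $\id{}$ by \thref{prop:ccantor_arith_equiv_id}, so is $\sTCCantor$ (applying \thref{prop:compprod_arith_reduction} once).

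For the strictness, the plan is to combine the characterization in \thref{thm:arith_reduction_tcbaire} with \thref{thm:openRT_not_<=_sTCbaire_x_lim^n}: were $\openRamsey \weiarithreducible \TCBaire$ to hold, the characterization would supply some $n \in \mathbb{N}$ with $\openRamsey \weireducible \sTCBaire \times \mflim^{(n)}$, which is explicitly ruled out.

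There is no genuine obstacle: the corollary is essentially a recombination of earlier results. The only step that is not a one-line citation is observing that $\sTCCantor$ is arithmetic, which is routine via König's lemma. Everything else is already packaged in \thref{thm:sTCBaire<=sTCCantor*openRT}, \thref{thm:openRT_not_<=_sTCbaire_x_lim^n}, \thref{thm:arith_reduction_tcbaire} and \thref{prop:compprod_arith_reduction}.
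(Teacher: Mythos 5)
Your proof is correct, and the strictness half is exactly the paper's argument (Theorem \thref{thm:arith_reduction_tcbaire} plus Theorem \thref{thm:openRT_not_<=_sTCbaire_x_lim^n}). For the positive direction the paper takes a shorter route: it cites $\TCBaire\weireducible\CCantor\compproduct\openRamsey$ (\thref{prop:tcbaire<=ccantor_comp_openRamsey}) directly and absorbs $\CCantor$ via \thref{prop:ccantor_arith_equiv_id} and \thref{prop:compprod_arith_reduction}, so it never needs to pass through $\sTCBaire$ or to verify that $\sTCCantor$ is arithmetic; your detour through \thref{thm:sTCBaire<=sTCCantor*openRT} works, and your K\"onig's-lemma argument that $\sTCCantor\weireducible\CCantor\compproduct\LPO$ (hence $\sTCCantor\weiarithreducible\id{}$) is sound, but it costs you that one extra verification for no gain.
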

\begin{proof}
	The reduction follows from $\TCBaire\weireducible \CCantor\compproduct \openRamsey$ (\thref{prop:tcbaire<=ccantor_comp_openRamsey}) using \thref{prop:ccantor_arith_equiv_id} and \thref{prop:compprod_arith_reduction}. The fact that the reduction is strict follows from \thref{thm:arith_reduction_tcbaire} as $\openRamsey\not\weireducible \sTCBaire \times \mflim^{(k)}$ for any $k$ (\thref{thm:openRT_not_<=_sTCbaire_x_lim^n}).
\end{proof}

\begin{theorem}
	\thlabel{thm:openRamsey_arith<_findHopen}
	$\openRamsey\strictlyweiarithreducible \findHopen$.
\end{theorem}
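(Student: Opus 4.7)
The plan is to handle the two directions separately. The reduction $\openRamsey \weiarithreducible \findHopen$ follows at once from $\openRamsey \weireducible \findHopen$, already sketched right after \thref{thm:tcbaire^*<findHopen} (alternatively, from $\openRamsey \weireducible \CBaire \compproduct \openRamsey \weireducible \findHopen$ via \thref{thm:lim^n*openRT<=findHopen}).

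For the strict direction, the strategy is to first establish an arithmetic version of \thref{prop:product_with_NHA}, namely: \emph{if $f \times \NHA \weiarithreducible \openRamsey$ then $f \weiarithreducible \CBaire$}. The proof will mirror the argument of \thref{prop:product_with_NHA_general} and \thref{prop:product_with_NHA}. Let arithmetic $\Phi, \Psi$ witness $f \times \NHA \weiarithreducible \openRamsey$. For any name $p_x$ of $x \in \dom(f)$, I will show that $\Phi(\coding{p_x,p_x})$ must code an open set $P \in \dom(\findHclosed)$. Otherwise $P \in \dom(\wfindHopen)$, so by $\wfindHopen \weiequiv \UCBaire$ (\thref{cor:wfindclopen=ucbaire}) and \thref{thm:ucbaire_hyperarithmetic_solution} there is $y \in \HomSol(P)$ with a name hyperarithmetic in $p_x$; since $\Psi$ is arithmetic, feeding this through $\Psi$ alongside $p_x$ yields an output still hyperarithmetic in $p_x$, contradicting the fact that it is supposed to contain a solution to $\NHA(p_x)$. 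Hence the arithmetic reduction effectively calls only $\openRamsey\restrict{\dom(\findHclosed)}$, and since every $\findHclosed(P)$-solution is also a valid $\openRamsey(P)$-solution, we obtain $f \weiarithreducible \findHclosed \weiequiv \CBaire$ via \thref{thm:findclopen=findclosed=cbaire}.

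With this tool at hand, assume for contradiction that $\findHopen \weiarithreducible \openRamsey$. Since $\NHA \weireducible \CBaire \weireducible \findHopen$ and $\findHopen$ is closed under finite product (\thref{prop:findHopen_closed_product}), we have $\findHopen \times \NHA \weireducible \findHopen \weiarithreducible \openRamsey$, whence $\findHopen \weiarithreducible \CBaire$ by the arithmetic version. Combined with $\TCBaire \weireducible \findHopen$ (\thref{prop:tcbaire<=findHopen}), this yields $\TCBaire \weiarithreducible \CBaire$, contradicting \thref{thm:cbaire_arith<_tcbaire}. The main technical subtlety will lie in the arithmetic analogue of \thref{prop:product_with_NHA_general}: one must track that arithmetic composition preserves hyperarithmetic-in-the-input reducibility, so that an arithmetic forward map, followed by a hyperarithmetic witness coming from $\UCBaire$, followed by an arithmetic backward map, still produces output hyperarithmetic in $p_x$.
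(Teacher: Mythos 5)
Your proposal is correct and follows essentially the same route as the paper: the key step in both is the arithmetic analogue of \thref{prop:product_with_NHA_general}/\thref{prop:product_with_NHA} (using that $\wfindHopen\weiequiv\UCBaire$ always has hyperarithmetic solutions, so the reduction must land in $\dom(\findHclosed)\weiequiv\CBaire$), combined with $\CBaire\strictlyweiarithreducible\TCBaire$ and $\TCBaire$ being below $\findHopen$ (resp.\ arithmetically below $\openRamsey$). The only difference is packaging: the paper applies the trick with $f=\openRamsey$ to get the intermediate separation $\openRamsey\strictlyweiarithreducible\CBaire\times\openRamsey$, while you apply it directly with $f=\findHopen$; both yield the same contradiction.
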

\begin{proof}
It suffices to show that $\openRamsey\strictlyweiarithreducible \CBaire\times \openRamsey$, as $\CBaire\times \openRamsey \weireducible \findHopen$ follows from $\openRamsey\weireducible \findHopen$ (see \thref{thm:lim^n*openRT<=findHopen}), $\CBaire\weireducible \findHopen$ (see \thref{prop:tcbaire<=findHopen}) and the fact that $\findHopen$ is closed under parallel product (\thref{prop:findHopen_closed_product}).

The reduction is trivial, so we only need to prove the separation. Notice
that the analogue of \thref{prop:product_with_NHA_general} for arithmetic
Weihrauch reduction holds (the same proof works by replacing ``computable"
with ``arithmetic'' and $\weireducible$ with $\weiarithreducible$). This allows us to repeat the proof of \thref{prop:product_with_NHA}, obtaining that $\CBaire \times \openRamsey \weiarithreducible \openRamsey$ implies $\openRamsey\weiarithreducible\CBaire$. This contradicts \thref{thm:cbaire_arith<_tcbaire} and \thref{thm:tcbaire_arit_<_openRT}.
\end{proof}

\begin{theorem}
	$\openRamsey^* \weiarithequiv \TCBaire^*$.
\end{theorem}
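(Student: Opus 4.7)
The plan is to prove the two inclusions separately. For $\TCBaire^* \weiarithreducible \openRamsey^*$, I would simply appeal to \thref{thm:tcbaire_arit_<_openRT}, which gives $\TCBaire \weiarithreducible \openRamsey$, and observe that arithmetic Weihrauch reducibility is preserved by finite parallelization: applying the witnessing arithmetic maps componentwise to finite tuples produces arithmetic maps witnessing $\TCBaire^* \weiarithreducible \openRamsey^*$.

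For $\openRamsey^* \weiarithreducible \TCBaire^*$, the main step is to establish the single-instance Weihrauch reduction $\openRamsey \weireducible \sTCBaire \times \TwfindHopen$. Given $P \in \boldfaceSigma^0_1(\Ramsey)$, I would compute $T := \closedsidesoltree{\coding{P}}$ (using \thref{lem:computable_operations}) and apply $\sTCBaire$ to $T$ to obtain $(b, x)$, while in parallel applying $\TwfindHopen$ to $P$ to obtain $y$; the reduction returns $x$ if $b=1$ and $y$ otherwise. Correctness follows from \thref{lem:solution_closed_side_iff_path_through_tree}: if $b=1$ then $x \in [T] = \HomSol(P) \setdifference P \subset \openRamsey(P)$; if $b=0$ then $[T] = \emptyset$, whence $P \in \dom(\wfindHopen)$ and therefore $\TwfindHopen(P) = \HomSol(P)$, so $y \in \openRamsey(P)$.

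Parallelizing gives $\openRamsey^* \weireducible \sTCBaire^* \times \TwfindHopen^*$, and it remains to bound each factor arithmetically in terms of $\TCBaire^*$. The relation $\sTCBaire \weireducible \LPO \compproduct \TCBaire$ follows directly from the definition of $\sTCBaire$, and combined with the standard equivalence $\parallelization{\LPO} \weiequiv \mflim$ yields $\sTCBaire^* \weireducible \mflim \compproduct \TCBaire^*$. On the other hand, $\TwfindHopen \weireducible \ATR_2 \weireducible \CBaire$ by \thref{thm:twfindopen<=atr2}, and the standard closure of $\CBaire$ under parallelization gives $\TwfindHopen^* \weireducible \CBaire \weireducible \TCBaire$. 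Applying the algebraic identities $(g \compproduct f_1) \times f_2 \weireducible g \compproduct (f_1 \times f_2)$ and $\TCBaire^* \times \TCBaire \weiequiv \TCBaire^*$, I conclude $\openRamsey^* \weireducible \mflim \compproduct \TCBaire^*$, which immediately yields $\openRamsey^* \weiarithreducible \TCBaire^*$.

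The conceptually delicate point is the single-instance reduction itself: although $\openRamsey$ does not arithmetically reduce to $\TCBaire$ (by \thref{thm:openRT_not_<=_sTCbaire_x_lim^n}), it decomposes as a parallel combination of two strictly weaker problems, one per side of the Ramsey dichotomy, whose finite parallelizations together do fit inside $\TCBaire^*$ modulo arithmetic bookkeeping. The remaining algebra is routine manipulation of compositional and parallel products.
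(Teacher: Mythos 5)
Your proposal is correct and follows essentially the same route as the paper: the key step in both is the single-instance reduction $\openRamsey \weireducible \sTCBaire \times \TwfindHopen$ via the tree $\closedsidesoltree{\coding{P}}$, followed by $\TwfindHopen \weireducible \ATR_2 \weireducible \CBaire$ and $\sTCBaire \weiarithreducible \TCBaire$. Your treatment of the finite parallelization is just a more explicit version of the bookkeeping the paper leaves implicit.
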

\begin{proof}
	The right-to-left reduction is a trivial consequence of $\TCBaire\strictlyweiarithreducible \openRamsey$ (\thref{thm:tcbaire_arit_<_openRT}).

	To prove the left-to-right reduction we first notice that
	\[ \openRamsey \weireducible \sTCBaire\times \TwfindHopen. \]
	Indeed, given an open set $P$ we can consider the input $([\closedsidesoltree{\coding{P}}],P)$ for $\sTCBaire\times \TwfindHopen$. This is clearly a valid input as both functions are total. Let $(\coding{b}\concat x,f)\in (\sTCBaire\times \TwfindHopen)([\closedsidesoltree{\coding{P}}],P)$. If $b=1$ then $x\in \HomSol(P)\setdifference{P}$ (\thref{lem:solution_closed_side_iff_path_through_tree}), and therefore $x\in\openRamsey(P)$. If $b=0$ then $[\closedsidesoltree{\coding{P}}]=\emptyset$, which implies that $P\in\dom(\wfindHopen)$ and hence $f\in\wfindHopen(P)$.

	We then have
	\[ \openRamsey \weireducible \sTCBaire\times \TwfindHopen \weireducible \sTCBaire\times \CBaire \weiarithreducible \TCBaire\times \TCBaire, \]
	where $\TwfindHopen \weireducible \CBaire$ follows from $\TwfindHopen\weireducible \ATR_2$ (\thref{thm:twfindopen<=atr2}). From this $\openRamsey^*\weiarithreducible\TCBaire^*$ follows immediately.
\end{proof}

\section{Conclusions}
\label{sec:conclusions}
Some problems resisted full characterization. In particular two questions remain open:

\begin{question}
	$\wfindHclosed\weiequiv \CBaire$?
\end{question}
\begin{question}
	$\CBaire\weireducible\openRamsey$?
\end{question}

Observe that a positive answer to the first question automatically yields a positive answer to the second one by \thref{cor:wfindHclosed<openRT}. We can expect that answering one of the two questions can shed light on the other.

As already observed in \cite{KMP20}, there is not a single ``analogue'' of $\mathrm{ATR}_0$ in the context of Weihrauch reducibility, and theorems that are equivalent from the reverse mathematics point of view can exhibit very different behaviors when phrased as multivalued functions. 

Notice in particular that the classical proofs of the equivalences, over $\mathrm{RCA}_0$, of $\mathrm{ATR}_0$ and the open and clopen Ramsey theorems (\cite{Simpson09}) are useful only to establish that $\UCBaire\weiequiv \wfindHopen \weiequiv \wfindHclopen$.

Finding a homogeneous solution that lands in an open set, when there are also solutions that avoid it, is a much harder problem. In particular, notice that a natural candidate for $\boldfacePi^1_1\mathrm{-CA}_0$ in the Weihrauch lattice is $\parallelization{\chiPi}$. The fact that $\findHopen$ computes $\chiPi$ and is closed under parallel product implies that $\chiPi^*\weireducible \findHopen$. This naturally leads to the following question:
\begin{question}
	$\parallelization{\findHopen}\weireducible\findHopen$?
\end{question}
A positive answer to this question would locate $\findHopen$ in the realm of $\boldfacePi^1_1\mathrm{-CA}_0$, in sharp contrast with what happens in reverse mathematics.

Let us now derive a few computability-theoretic corollaries.

\begin{corollary}[{Solovay's theorem \cite[Thm.\ 1.8]{Solovay78}}]
	\thlabel{thm:solovay} If $P\subset\Ramsey$ is open with computable code then either there is an hyperarithmetic homogeneous solution
landing in $P$ or there is a homogeneous solution avoiding $P$.
\end{corollary}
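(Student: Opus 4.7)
The plan is to derive Solovay's theorem as an immediate consequence of \thref{cor:wfindclopen=ucbaire} (which yields $\wfindHopen \weiequiv \UCBaire$) together with \thref{thm:ucbaire_hyperarithmetic_solution}. Given an open $P \subset \Ramsey$ with a computable name, I would first distinguish the two cases of the dichotomy: if $\HomSol(P) \setdifference P \neq \emptyset$ then there is nothing to prove since we already have a homogeneous solution avoiding $P$, so the interesting case is when $\HomSol(P) \setdifference P = \emptyset$.

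In this latter case, $\HomSol(P) \subset P$, which is exactly the condition defining $\dom(\wfindHopen)$. Hence $P$ is a valid instance for $\wfindHopen$. Invoking the reduction $\wfindHopen \weireducible \UCBaire$ provided by \thref{cor:wfindclopen=ucbaire}, let $\Phi, \Psi$ be the computable maps witnessing the Weihrauch reduction. Applying $\Phi$ to the computable name of $P$ produces a computable instance of $\UCBaire$.

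By \thref{thm:ucbaire_hyperarithmetic_solution}, this computable $\UCBaire$-instance admits a hyperarithmetic solution $q$. Then $\Psi$ applied to $q$ and the name of $P$ (both hyperarithmetic) yields a name $\Psi(\coding{\text{name}(P), q})$ of some $f \in \wfindHopen(P) = \HomSol(P)\cap P$, and this $f$ is hyperarithmetic as it is computed from hyperarithmetic data. Thus $f$ is a hyperarithmetic homogeneous solution landing in $P$, completing the dichotomy. No step here presents a real obstacle, as all the heavy lifting is packaged in \thref{cor:wfindclopen=ucbaire} and \thref{thm:ucbaire_hyperarithmetic_solution}; the only observation required is that the weak version $\wfindHopen$ is precisely designed to cover the ``landing'' alternative, which makes the corollary fall out of the general Weihrauch-theoretic framework.
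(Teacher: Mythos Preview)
Your proposal is correct and follows essentially the same approach as the paper: the paper's proof is a one-line appeal to \thref{thm:ucbaire_hyperarithmetic_solution} together with $\wfindHopen\weireducible\UCBaire$ (citing \thref{thm:open<ucbaire} rather than the full equivalence \thref{cor:wfindclopen=ucbaire}, since only that direction is needed), and your argument simply unpacks this into the explicit case split and the application of the reduction functionals.
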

\begin{proof}
This follows from \thref{thm:ucbaire_hyperarithmetic_solution} and the fact that $\wfindHopen\weiequiv \UCBaire$ (\thref{thm:open<ucbaire}).
\end{proof}

The following result is attributed to Solovay \cite{Solovay78} (see \cite[Thm.\ 1]{Mansfield78} for an explicit proof).
\begin{corollary}
	The set of homogeneous solutions for a clopen set with computable code always contains an hyperarithmetic element.
\end{corollary}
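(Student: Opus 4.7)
The plan is to apply the uniform equivalence $\clopenRamsey \weiequiv \UCBaire$ established in \thref{thm:ucbaire=clopenramsey} together with the basis-type result \thref{thm:ucbaire_hyperarithmetic_solution}, which guarantees that every instance of a problem Weihrauch-reducible to $\UCBaire$ admits a solution hyperarithmetic relative to the input.

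More concretely, let $D \in \boldfaceDelta^0_1(\Ramsey)$ have a computable code. Since $\clopenRamsey$ is by definition the total multivalued function $D \mapsto \HomSol(D)$, a realizer for $\clopenRamsey$ applied to any name of $D$ produces a homogeneous solution for $D$. By \thref{thm:ucbaire=clopenramsey} we have $\clopenRamsey \weireducible \UCBaire$, so \thref{thm:ucbaire_hyperarithmetic_solution} immediately yields an element of $\clopenRamsey(D) = \HomSol(D)$ that is hyperarithmetic in the code of $D$. Since the code is computable by assumption, this solution is itself hyperarithmetic.

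There is essentially no obstacle: the entire content of the corollary has already been packaged into the Weihrauch-theoretic machinery. The only thing to verify is that a computable code for $D$ as a clopen set of $\Ramsey$ is a legitimate name in the sense of the representation $\repmap{\boldfaceDelta^0_1(\Ramsey)}$, which is immediate from the definition of this representation recalled in Section~\ref{subsec:rep_spaces}. The one-line proof therefore consists of chaining the two referenced results.
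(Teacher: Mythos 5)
Your proposal is correct and is essentially identical to the paper's own proof, which likewise just combines $\clopenRamsey\weiequiv\UCBaire$ (\thref{thm:ucbaire=clopenramsey}) with \thref{thm:ucbaire_hyperarithmetic_solution}. The extra remark about computable codes being legitimate names is a harmless elaboration of the same argument.
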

\begin{proof}
	This follows from \thref{thm:ucbaire_hyperarithmetic_solution} and the fact that $\clopenRamsey\weiequiv\UCBaire$ (\thref{thm:ucbaire=clopenramsey}).
\end{proof}

\begin{corollary}
	There is a clopen set $D\subset\Ramsey$ with computable code s.t.\ every homogeneous solution that lands in $D$ is not hyperarithmetic.
\end{corollary}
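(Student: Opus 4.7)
The plan is to extract the clopen set directly from the reduction $\CBaire \weireducible \findHclopen$ given in \thref{thm:findclopen=findclosed=cbaire}, applied to a suitably chosen computable instance of $\CBaire$. Specifically, fix a computable ill-founded tree $T \in \repincTree$ with no hyperarithmetic paths. Such a $T$ exists: by a classical basis-theorem argument, there is a computable ill-founded tree on $\mathbb{N}$ with no hyperarithmetic paths (equivalently, the hyperarithmetic functions are not a basis for the nonempty $\boldfacePi^0_1$ subsets of $\Baire$; see e.g.\ \cite[Thm.\ III.1.1]{SacksHRT}), and we can transport it into $\repincTree$ via the canonical computable bijection between $\Baire$ and $\Ramsey$ without creating hyperarithmetic paths.

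Now define $D := \describepath{}(T)$. By \thref{lem:computable_operations}(\ref{itm:path_description}), $D$ is a clopen subset of $\Ramsey$ with a code computable from $T$, hence computable. Since $[T] \neq \emptyset$, \thref{lem:clopen_describes_paths} ensures that $D \in \dom(\findHclopen)$ and that there is a uniform computable map sending each $f \in \HomSol(D) \cap D$ to some path $x \in [T]$.

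Finally, suppose for contradiction that some $f \in \HomSol(D) \cap D$ is hyperarithmetic. Then the associated path $x \in [T]$, being computable from $f$, is also hyperarithmetic, contradicting the choice of $T$. Hence every homogeneous solution landing in $D$ is non-hyperarithmetic. There is no genuine obstacle here beyond invoking the classical basis result for $T$; the rest is a direct unwinding of \thref{lem:clopen_describes_paths} and the downward closure of the hyperarithmetic sets under Turing reducibility.
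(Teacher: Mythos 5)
Your proof is correct, but it is packaged differently from the paper's. The paper argues abstractly by contradiction at the level of Weihrauch degrees: it invokes $\findHclopen\weiequiv\CBaire$ (\thref{thm:findclopen=findclosed=cbaire}) together with $\NHA\weireducible\CBaire$, concluding that if every computable clopen set had a hyperarithmetic solution landing in itself, then every computable instance of $\CBaire$ would have a hyperarithmetic solution. You instead produce an explicit witness: a computable ill-founded $T\in\repincTree$ with no hyperarithmetic path (legitimate by the failure of the hyperarithmetic basis theorem for $\boldfacePi^0_1$, and the transfer into $\repincTree$ via the canonical bijection is unproblematic --- the paper itself uses the same move elsewhere), followed by $D:=\describepath{}(T)$ and \thref{lem:clopen_describes_paths}. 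The two arguments rest on exactly the same ingredients, since the reduction $\CBaire\strongweireducible\findHclopen$ in \thref{thm:findclopen=findclosed=cbaire} is itself implemented via $\describepath{}$, and the computable content of $\NHA\weireducible\CBaire$ is precisely the existence of your tree $T$; you have essentially unwound the paper's degree-theoretic shorthand. What your version buys is self-containedness and a concrete $D$, which matches the existential form of the statement more directly; what the paper's version buys is brevity, given that both black boxes are already established. One cosmetic point: it is worth remarking, as you implicitly do via $D\in\dom(\findHclopen)$, that $\HomSol(D)\cap D\neq\emptyset$, so the conclusion is not vacuous.
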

\begin{proof}
	This follows from the fact that $\findHclopen\weiequiv\CBaire$ (\thref{thm:findclopen=findclosed=cbaire}): if every computable clopen set had an hyperarithmetic solution landing in itself then every computable instance of $\CBaire$ would have a hyperarithmetic solution, contradicting $\NHA\weireducible\CBaire$ (\cite[Cor.\ 3.6]{KMP20}).
\end{proof}
	
\begin{corollary}
	\thlabel{cor:complexity_of_solutions}
	Every open set $P\subset\Ramsey$ with computable code has a homogeneous solution $f$ that is strictly Turing reducible to Kleene's $\KleeneO$.
\end{corollary}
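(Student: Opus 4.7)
The plan is to split on whether every homogeneous solution for $P$ lands in $P$, and to handle each case with a different classical basis theorem, so that in both cases one gets a solution $f$ with $f \leq_T \mathcal{O}$ but $\mathcal{O} \not\leq_T f$.

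In the case $P \in \dom(\wfindHopen)$, I would invoke $\wfindHopen \weiequiv \UCBaire$ (\thref{cor:wfindclopen=ucbaire}) together with \thref{thm:ucbaire_hyperarithmetic_solution} to produce a homogeneous solution $f$ which is hyperarithmetic relative to (a name for) $P$, and hence absolutely hyperarithmetic since $P$ has a computable code. Every hyperarithmetic real is Turing reducible to $\mathcal{O}$, while $\mathcal{O}$ itself is $\Pi^1_1$-complete and therefore not hyperarithmetic, so this at once yields $f <_T \mathcal{O}$.

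In the remaining case $P \in \dom(\findHclosed)$, the tree $\closedsidesoltree{\coding{P}}$ is a non-empty computable tree by \thref{lem:computable_operations}.\ref{itm:comp_tree}, and its paths are exactly the homogeneous solutions that avoid $P$ by \thref{lem:solution_closed_side_iff_path_through_tree}. I would then appeal to Gandy's basis theorem: every non-empty $\Sigma^1_1$ subset of $\Baire$ contains a member $f$ satisfying $\omega_1^f = \omega_1^{CK}$ and $f \leq_T \mathcal{O}$. Applying this to the $\Pi^0_1$ class $[\closedsidesoltree{\coding{P}}]$ produces the required $f$, since $\omega_1^\mathcal{O} > \omega_1^{CK}$ forces $\mathcal{O} \not\leq_T f$.

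The main obstacle lies in the second case. As noted after \thref{prop:ucbaire<wfindHclosed}, Solovay exhibited a computable open set all of whose homogeneous solutions are non-hyperarithmetic, so the $\UCBaire$-based argument used in Case 1 cannot be adapted to ensure strictness here. Gandy's basis theorem is the classical tool that resolves this: it produces a path through $\closedsidesoltree{\coding{P}}$ whose $\omega_1^{CK}$ is not increased, thereby forbidding the path from computing $\mathcal{O}$ while still keeping it hyperarithmetic in $\mathcal{O}$.
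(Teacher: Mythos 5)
Your proof is correct and follows essentially the same route as the paper: the same case split on $\dom(\wfindHopen)$ versus $\dom(\findHclosed)$, with the hyperarithmetic solution coming from $\wfindHopen\weiequiv\UCBaire$ (which is exactly how the paper's \thref{thm:solovay} is obtained) in the first case, and Gandy's basis theorem applied to $[\closedsidesoltree{\coding{P}}]$ in the second. The only difference is that you spell out the strictness $f\strictlyturingreducible\KleeneO$ (via non-hyperarithmeticity of $\KleeneO$, resp.\ preservation of $\omega_1^{CK}$) in more detail than the paper, which simply cites that $\{f\st f\strictlyturingreducible\KleeneO\}$ is a basis for $\Sigma^1_1$ predicates.
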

\begin{proof}
	It follows from the proof of Gandy basis theorem (see \cite[Chap.\ III, Thm.\ 1.4]{SacksHRT}) that $\{f\st f\strictlyturingreducible \KleeneO\}$ is a basis for the $\Sigma^1_1$ predicates. If $P\in\dom(\wfindHopen)$ then, by \thref{thm:solovay}, it has an hyperarithmetic solution. Otherwise $P\in\dom(\findHclosed)$ hence, by \thref{lem:solution_closed_side_iff_path_through_tree}, a homogeneous solution for $P$ can be computed from any element of $[\closedsidesoltree{\coding{P}}]$ (the tree $\closedsidesoltree{\coding{P}}$ is computable from $\coding{P}$, see \thref{lem:computable_operations}). By the Gandy basis theorem the claim follows.
\end{proof}
In particular \thref{cor:complexity_of_solutions} shows that the difference, in the (arithmetic) Weihrauch lattice, between $\openRamsey$ and $\CBaire$ cannot be explained in terms of complexity of the solutions but rests entirely on the lack of uniformity.

\let\oldaddcontentsline\addcontentsline
\renewcommand{\addcontentsline}[3]{}
\bibliographystyle{mbibstyle}
\bibliography{bibliography}
\let\addcontentsline\oldaddcontentsline

\end{document}